\newtheorem{theorem}{Theorem}[section]
\newtheorem{lemma}[theorem]{Lemma}
\newtheorem{proposition}[theorem]{Proposition}
\newtheorem{corollary}[theorem]{Corollary}
\newtheorem{remar}[theorem]{Remark}
\theoremstyle{definition}
\newtheorem{example}[theorem]{Example}
\newtheorem{prob}[theorem]{Open Problem}
\newtheorem{definition}[theorem]{Definition}
\newcommand{\chara}{\mbox{\rm char}\,}
\newcommand{\cO}{\mathcal{O}}
\newcommand{\cM}{\mathcal{M}}
\newcommand{\R}{\mathbb R}
\newcommand{\Q}{\mathbb Q}
\newcommand{\Z}{\mathbb Z}
\begin{document}

\title[Almost Mathematics and deeply ramified fields]{Almost mathematics, K\"ahler differentials and deeply ramified fields}

\author{Steven Dale Cutkosky}

\dedicatory{Dedicated to Mel Hochster and Craig Huneke}
%

\thanks{partially supported by  NSF grant DMS 2054394.}

\address{Department of Mathematics, University of Missouri, Columbia,
MO 65211, USA}
\email{cutkoskys@missouri.edu}

\begin{abstract}
This article discusses ramification and the structure of relative K\"ahler differentials of extensions of valued fields. 
We  begin by  surveying  the theory developed in recent work with Franz-Viktor Kuhlmann and Anna Rzepka constructing the relative K\"ahler differentials of extensions of valuation rings in Artin-Schreier and Kummer extensions. We then show how this theory is applied  to give a  simple proof of Gabber and Ramero's characterization of deeply ramified fields.
Section \ref{Section2}  develops  the basics of almost mathematics, and should be accessible to a broad audience.
  Section \ref{Section3} gives a simple and self contained proof  of Gabber and Ramero's characterization of  when the extension of a rank 1 valuation of a field to its separable closure is weakly \'etale. In the final section, we  consider the equivalent conditions characterizing deeply ramified fields, as they are defined by Coates and Greenberg, and show that they are  the algebraic extensions $K$ of the p-adics $\Q_p$  which satisfy $\Omega_{\cO_{\overline{\Q_p}}|\cO_K}=0$,
for an algebraic closure $\overline{\Q_p}$ of $\Q_p$ which contains $K$.  
  
\end{abstract}

\maketitle

\section{Introduction} In this paper, we survey some  progress on understanding ramification and the structure of relative K\"ahler differentials of extensions of valued fields. 

Sections \ref{Section0} and \ref{Section1} survey our recent papers \cite{CKR} with Franz-Viktor Kuhlmann and Anna Rzepka and \cite{CK} with Franz-Viktor Kuhlmann. In Section \ref{Section0}, we outline  the theory developed in \cite{CKR} and \cite{CK} constructing the relative K\"ahler differentials of extensions of valuation rings in Artin-Schreier and Kummer extensions. In Section \ref{Section1}, we show how this theory is applied  to give a  simple proof (\cite[Theorem 1.2]{CK}) of the characterization of deeply ramified fields by Gabber Ramero, given in   \cite[Theorem 6.6.12]{GR}.
Section \ref{Section2}  develops  the basics of almost mathematics, and should be accessible to a broad audience.
 It is self contained, and does not require results from almost mathematics from other sources. This section only assumes that the reader is familiar with the basics of  commutative algebra as is developed in the books of  Matsumura \cite{Mat} or Eisenbud \cite{E}. Section \ref{Section3} gives a simple and self contained proof (via earlier results of this article) of \cite[Theorem 6.6.12]{GR}, characterizing when the extension of a rank 1 valuation ring to its separable closure is weakly \'etale. In Section \ref{Section4}, we  consider the equivalent conditions characterizing deeply ramified fields, as they are defined by Coates and Greenberg in \cite{CG}, and show that they are indeed the algebraic extensions $K$ of the p-adics $\Q_p$  which satisfy $\Omega_{\cO_{\overline{\Q_p}}|\cO_K}=0$,
for an algebraic closure $\overline{\Q_p}$ of $\Q_p$ which contains $K$.

Suppose that  $(K,v)$ is a valued field. We will denote the valuation ring of $v$ by $\mathcal O_K$, the maximal ideal of $\mathcal O_K$ by $\mathcal M_K$,
the residue field of $\mathcal O_K$ by $Kv$ and the value group of $v$ by $vK$. We denote an extension of valued fields $K\rightarrow L$ by $(L/K,v)$ where $v$ is the valuation on $L$ and we also denote its restriction to $K$ by $v$. Thus we have an induced extension of valuation rings $\cO_K\rightarrow \cO_L$. An extension of valued fields $(L/K,v)$ is unibranched if $v|K$ has a unique extension to $L$.

In Section \ref{Section0} of this article we survey some recent results with Franz-Viktor Kuhlmann and with Franz-Viktor Kuhlmann and Anna Rzepka computing the relative K\"ahler differentials of extensions of valuation rings in Artin-Schreier extensions and Kummer extensions of prime degrees.  We use this to compute the relative K\"ahler differentials of extensions of valuation rings in finite Galois extensions. We characterize when the K\"ahler differentials of such extensions are trivial.
Our results are valid for valuations of arbitrary rank. The problem of computing K\"ahler differentials in Galois extensions of degree $p$ is also studied in some papers by Thatte \cite{Th1}, \cite{Th2}. A  recent paper by Novacoski and Spivakovsky \cite{NoSp} computes the K\"ahler differentials of extensions of valued fields in terms of a generating sequence of the extension of valuations. 

A particularly interesting case of extensions are the  defect  extensions. A unibranched extension $K\rightarrow L$ of valued fields satisfies the inequality $$
[L:K]\ge (vL:vK)[Lv:Kv].
$$
 The extension 
has defect if $[L:K]>(vL:vK)[Lv:Kv]$. The defect of the extension is 
$$
\frac{[L:K]}{(vL:vK)[Lv:Kv]}
$$
which is a power of the characteristic $p$ of $Kv$ (Ostrowski's lemma, \cite[Theorem 2, page 236]{Ri}).
Defect can only occur in an extension if $Kv$ has positive  characteristic (Corollary to Theorem 25, page 78 \cite{ZS2}). Defect in Artin-Schreier extensions and Kummer extensions of prime degree are classified in \cite{Ku30} and \cite{KuRz} as being either independent or dependent. We show that this distinction is detected by the vanishing of the relative K\"ahler differentials. 

\begin{theorem}(Theorem 1.2 \cite{CKR})\label{Ocharind}
Take a valued field $(K,v)$ with $\chara Kv>0$; if $\chara K=0$, then assume that $K$
contains all $p$-th roots of unity. Further, take a Galois extension $(L|K,v)$ of 
prime degree with nontrivial defect. Then the
extension has independent defect if and only if 
\begin{equation}                         
\Omega_{\cO_L|\cO_K} \>=\> 0\>.
\end{equation}
\end{theorem}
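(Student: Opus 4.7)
The plan is to prove the theorem by treating in turn the two regimes---Artin-Schreier when $\chara K = p$, and Kummer when $\chara K = 0$ and $K$ contains the $p$-th roots of unity---in each of which the survey of Section \ref{Section0} gives an explicit presentation of $\Omega_{\cO_L|\cO_K}$ in terms of a suitable generator of $L/K$. The vanishing of this module will then be translated into an inequality in the value group that matches the definition of independent defect from \cite{Ku30} and \cite{KuRz}.

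First I would fix a generator $\eta$ of $L$ over $K$ in standard form: $\eta^p - \eta = a$ with $v(\eta) < 0$ in the Artin-Schreier case, so that the minimal polynomial $f$ satisfies $f'(\eta) = -1$; and $\eta^p = a$ with the normalization used in \cite{CKR} in the Kummer case, so that $f'(\eta) = p\eta^{p-1}$. The key invariant in both cases is the distance $\delta := \sup\{v(\eta - c) : c \in K\}$, taken in the divisible hull of $vK$. Because the extension has nontrivial defect this supremum is not attained by any $c \in K$, and the cofinal set of approximations $c \in K$ controls the structure of $\cO_L$ as an $\cO_K$-algebra (which is emphatically not monogenic in the defect case).

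Next I would invoke the formulas surveyed in Section \ref{Section0} that present $\Omega_{\cO_L|\cO_K}$ as a direct limit of the K\"ahler modules of the monogenic subrings $\cO_K[(\eta - c)/d_c] \subset \cO_L$ obtained by rescaling each approximating element $\eta - c$, and from these read off the annihilator of $\Omega_{\cO_L|\cO_K}$ in terms of the valuations $v(f'(\eta))$ and $v(\eta - c)$. In the Artin-Schreier case this reduces to whether $-\delta$ lies in the nonnegative part of $vL$, and in the Kummer case to whether $v(p) + (p-1)v(\eta) - \delta$ does. In both cases the vanishing of $\Omega_{\cO_L|\cO_K}$ is thus equivalent to a single inequality between $\delta$ and $v(f'(\eta))$ in the value group.

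The last step is to compare this inequality with the definition of independent defect, which characterizes independence by precisely a supremum condition on $\delta$ relative to $v(f'(\eta))$ modulo $vK$. The main obstacle will be the Kummer case, where the term $v(p)$ appearing in $f'(\eta) = p\eta^{p-1}$ couples the K\"ahler-module computation to a genuinely mixed-characteristic quantity, and one must verify that the valuation inequalities produced by the vanishing condition align with those in the definition of independent defect uniformly across the choice of generator $\eta$---recall that $\delta$ itself depends on $\eta$, and independence/dependence is intrinsic to the extension, so independence of the vanishing condition from the choice of $\eta$ is part of what must be checked.
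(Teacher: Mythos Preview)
Your setup matches the paper's sketch: express $\cO_L$ as the union of monogenic subalgebras $\cO_K[\vartheta_c]$ with $\vartheta_c=t_{v(\vartheta-c)}(\vartheta-c)$, and compute $\Omega_{\cO_L|\cO_K}$ as the direct limit of the $\Omega_{\cO_K[\vartheta_c]|\cO_K}\otimes_{\cO_K[\vartheta_c]}\cO_L$. The gap is in your endgame. You assert that vanishing of $\Omega$ reduces to ``a single inequality between $\delta$ and $v(f'(\eta))$,'' but this is not so. In the Artin--Schreier case $v(f'(\eta))=v(-1)=0$ and $v(\vartheta-K)\subset vL_{<0}$, so your stated criterion ``whether $-\delta$ lies in the nonnegative part of $vL$'' is vacuously satisfied and cannot distinguish independent from dependent defect. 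What the paper actually does is compute the limit explicitly as $\Omega_{\cO_L|\cO_K}\cong I/I^p$, where $I=(a\in L\mid v(a)\in -v(\vartheta-K))$ is the ramification ideal; then $\Omega=0$ if and only if $I=I^p$, which is by definition the independence of the defect (equivalently, $vK\setminus\pm v(\vartheta-K)$ is a convex subgroup of $vK$).

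The point you are missing is that independence is a condition on the \emph{type} of the cut determined by $v(\vartheta-K)$---it must sit at the boundary of a convex subgroup---not on the position of $\delta$ relative to any fixed value such as $v(f'(\eta))$. In rank one this collapses to the single condition that $v(\vartheta-K)$ is cofinal in $vK_{<0}$, but already in rank two there are many cuts with the same order relation to $0$ yet different independence status. Your proposed comparison of ``an inequality'' against ``a supremum condition on $\delta$ relative to $v(f'(\eta))$ modulo $vK$'' is too coarse to capture $I=I^p$; to make the argument go through you would have to carry out the identification $\Omega\cong I/I^p$ (or an equivalent statement about the final segment $-v(\vartheta-K)$ being stable under division by $p$), which is exactly what the paper's computation of the transition maps in the direct limit produces.
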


 In Section \ref{Section1} of this paper, we give an outline of the application of our classification theorems of relative K\"ahler differentials to give a new proof (\cite[Theorem 1.2]{CK}) of the characterization of deeply ramified fields in \cite[Theorem 6.6.12]{GR}.
Before stating this theorem, we introduce some necessary notation.
A subgroup $\Delta$ of a value group $vK$ is called a convex subgroup if whenever an element $\alpha$ of $\Gamma$ belongs to $\Delta$ then all the elements $\beta$ of $vK$ which lie between $\alpha$ and $-\alpha$  also belong to $\Delta$. The set of all convex subgroups of $vK$ is totally ordered by the relation of inclusion. This concept is discussed on page 40 of \cite{ZS2}.
The completion $\hat K$ of a valued field generalizes the classical notion of completion of a discretely valued field. A definition and the basic properties of completions of valued fields for valuations of arbitrary rank can be found in Section 2.4 of \cite{EP}. 

A valued field which satisfies the equivalent conditions of the following theorem is called a deeply ramified field. 

\begin{theorem}(Theorem 6.6.12 \cite{GR})\label{GRThm2}
Let $(K,v)$ be a valued field, and identify $v$ with an extension of $v$ to a separable closure $K^{\rm sep}$ of $K$. Then the following two conditions are equivalent.
\begin{enumerate}
\item[1)] $\Omega_{\mathcal O_{K^{\rm sep}}|\mathcal O_K}=0$
\item[2)] Whenever $\Gamma_1\subsetneq\Gamma_2$ are convex subgroups of the value
group $vK$, then $\Gamma_2/\Gamma_1$ is not isomorphic to $\Z$. Further, if 
 $\chara Kv=p>0$, then the homomorphism
\begin{equation}                          \label{homOpO}
\cO_{\hat K}/p\cO_{\hat K} \ni x\mapsto x^p\in \cO_{\hat K}/p\cO_{\hat K}
\end{equation}
is surjective, where $\cO_{\hat K}$ denotes the valuation ring of the completion 
$\hat K$ of $(K,v)$. 
\end{enumerate}
\end{theorem}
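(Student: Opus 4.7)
The plan is to deduce Theorem \ref{GRThm2} from Theorem \ref{Ocharind} together with the companion classification results of \cite{CKR, CK} that describe $\Omega_{\cO_L|\cO_K}$ for every prime-degree Artin--Schreier and Kummer extension, whether defectless or with defect. Two global tools underpin the whole argument. First, K\"ahler differentials commute with direct limits, so
\[
\Omega_{\cO_{K^{\rm sep}}|\cO_K} \;=\; \varinjlim_{L} \Omega_{\cO_L|\cO_K},
\]
the limit being taken over finite subextensions $L$ of $K^{\rm sep}/K$. Second, the tower exact sequence for K\"ahler differentials lets me split $\Omega_{\cO_L|\cO_K}$ through any intermediate field and, in particular, reduces the analysis of a tower $K = K_0 \subset K_1 \subset \cdots \subset K_r = L$ of prime-degree Galois extensions to each individual step. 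Combined, these reduce the whole problem to tracking how a given class $\omega \in \Omega_{\cO_L|\cO_K}$ evolves under successive prime-degree Galois enlargements inside $K^{\rm sep}$.

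For $(2) \Rightarrow (1)$, I refine each finite Galois $L/K$ into a tower of prime-degree Galois steps and control each step. If a step has defect, Theorem \ref{Ocharind} equates independence of the defect with vanishing of its relative differentials; the clauses of $(2)$ together force every defect prime-degree Galois step inside $K^{\rm sep}/K$ to be of independent type, essentially because dependent defect requires either a discrete coarsening of $v$ (ruled out by the no-$\Z$-quotient clause) or the failure of Frobenius surjectivity on $\cO_{\hat K}/p\cO_{\hat K}$ (ruled out by the second clause). If a step is defectless, the classification in \cite{CKR, CK} presents its relative differentials via a small set of explicit generators whose annihilator ideals have a fixed positive lower bound in the value group; the absence of $\Z$-quotients among convex subgroups of $vK$ supplies elements of arbitrarily small positive value inside $K^{\rm sep}$, which after passing sufficiently high kill each generator. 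Taking the direct limit over a cofinal tower, $\omega$ eventually becomes zero.

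For $(1) \Rightarrow (2)$, I argue by contrapositive. If convex subgroups satisfy $\Gamma_1 \subsetneq \Gamma_2 \subseteq vK$ with $\Gamma_2/\Gamma_1 \cong \Z$, I pass to the coarsening of $v$ by $\Gamma_1$ and obtain a rank-one discrete piece with uniformizer $\pi$; inside $K^{\rm sep}$ I exhibit a prime-degree Galois extension adapted to $\pi$ (Artin--Schreier if $\chara Kv = p$, Kummer otherwise) whose relative K\"ahler differentials are nonzero with annihilator bounded below by a power of $\pi$, so the class survives in the direct limit and $\Omega_{\cO_{K^{\rm sep}}|\cO_K} \neq 0$. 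If instead $(2)$ fails because Frobenius on $\cO_{\hat K}/p\cO_{\hat K}$ is not surjective, I pick a representative outside its image, use it to build an Artin--Schreier extension of $\hat K$ with dependent defect, and descend the non-vanishing of the corresponding differentials to $K^{\rm sep}$ via Krasner-type approximation and Theorem \ref{Ocharind}. The main obstacle I anticipate is the quantitative book-keeping of annihilator ideals across arbitrarily long towers: matching the lower bounds supplied by the classification in \cite{CKR, CK} against the qualitative clauses of $(2)$ is the technical heart of the argument, and it is precisely here that the dichotomy independent/dependent defect is converted into the dichotomy vanishing/non-vanishing of $\Omega_{\cO_{K^{\rm sep}}|\cO_K}$.
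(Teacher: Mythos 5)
Your proposal follows essentially the same route as the paper: express $\Omega_{\cO_{K^{\rm sep}}|\cO_K}$ as a direct limit over finite Galois subextensions (Equation (\ref{eq41})), use the henselization compatibility (\ref{eq30}), Proposition \ref{Prop3*} and the tower exact sequence of Theorem \ref{Theorem4*} to reduce to prime-degree Kummer and Artin--Schreier steps, and then invoke the classification of Theorem \ref{Theorem1*} (with Theorem \ref{Ocharind} handling the defect case) to match the vanishing of the differentials against the two clauses of condition 2). The only point worth flagging is that the reduction to a tower of prime-degree steps via Proposition \ref{Prop3*} requires first passing to the henselization and splitting off the inertia field, which your sketch leaves implicit.
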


We outline our  new proof (from Theorem 2.1 \cite{CK})   of this theorem  in Section \ref{Section1}. One feature of our proof is that our techniques are valid for valuations of all ranks. The original proof of Gabber and Ramero uses an induction argument to reduce to valuations of rank 1, where techniques of almost mathematics can be used.

In Section \ref{Section3} of this paper, we give a simplified proof of \cite[Proposition 6.6.2]{GR}.  

Deeply ramified fields were first defined by Coates and Greenberg in \cite{CG} for algebraic extensions of local fields. They give equivalent conditions characterizing these fields, which we state in Theorem \ref{CGtheorem}. The equivalent conditions they give are different from those of Theorem \ref{GRThm2}. Evidently, in the case of algebraic extensions of local fields, these two definitions of being deeply ramified agree. We show that this is so in Theorem \ref{localfieldDR}. One of the equivalent conditions of Coates and Greenberg is a vanishing statement in group cohomology: $H^1(K,\cM_k)=0$. Coates and Greenberg use this to prove other vanishing theorems on Abelian varieties $A$ over a deeply ramified field, from which they deduce results about the $p$-primary subgroups of the points $A(K)$ and $A(\overline \Q_p)$ of $A$. 

Basic setups are defined in Chapter 2 of \cite{GR} and at the beginning of Section \ref{Section2} of this paper, as are the definitions of almost zero modules and almost isomorphisms. 
Almost \'etale homomorphisms are defined in Chapter 3 of \cite{GR} and in Definition \ref{Def2} of this paper. Weakly \'etale homomorphisms are defined in Chapter 3 of \cite{GR} and in \ref{Def3} of this paper.  We give a simpler proof of the following theorem of \cite{GR} in Theorem \ref{Theorem4'} of Section \ref{Section3}.

\begin{theorem}(Proposition 6.6.2 \cite{GR})\label{GRThm1}.
Suppose that $(K,v)$ is a valued field, where $v$ is nondiscrete of rank 1. Let the basic setup be $(\cO_K,I)$ where $I=\cM_K$.  Identify $v$ with an extension of $v$ to a separable closure $K^{\rm sep}$ of $K$. Then the following are equivalent.
\begin{enumerate}
\item[1)] $\Omega_{\cO_{K^{\rm sep}}|\cO_K}$ is zero.
\item[2)] $\Omega_{\cO_{K^{\rm sep}}|\cO_K}$ is almost zero.
\item[3)] $\cO_{K}\rightarrow \cO_{K^{\rm sep}}$ is weakly \'etale.
\end{enumerate}
\end{theorem}

We state another result which we obtain in the course of our proof of Theorem \ref{GRThm1}, which sheds light on  almost \'etale extensions. This theorem is implicit in the proof in Proposition 6.6.2 of \cite{GR}. 

\begin{theorem}\label{Theorem3'} Suppose that $(L/K,v)$ is a finite separable unibranched extension of valued fields and that $v$ has rank 1 and is nondiscrete.  Let the basic setup be $(\cO_K,I)$ where $I=\cM_K$. Then $\cO_K\rightarrow \cO_L$ is almost finite \'etale if and only if $\Omega_{\cO_L|\cO_K}$ is almost zero.
\end{theorem}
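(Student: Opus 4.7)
The forward implication is immediate from the formal theory of almost finite \'etale morphisms developed in Section \ref{Section2}: the module of relative differentials of such an extension is automatically almost zero. The content lies in the converse, which I would establish by exhibiting $\cO_L$ as almost finite \'etale over $\cO_K$ via the trace form. Flatness of $\cO_L$ over $\cO_K$ is automatic because $\cO_L$ is $\cO_K$-torsion-free and $\cO_K$ is a valuation ring, so what needs checking is that $\cO_L$ is almost finitely presented over $\cO_K$, and that the trace pairing $\cO_L \otimes_{\cO_K} \cO_L \to \cO_K$ induces an almost isomorphism $\cO_L \to \mathrm{Hom}_{\cO_K}(\cO_L, \cO_K)$; together these deliver almost projectivity of finite rank together with almost \'etaleness, hence almost finite \'etale.

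For the almost finite presentation, the hypotheses that $L/K$ is finite separable and unibranched with $v$ of rank $1$ identify $\cO_L$ with the integral closure of $\cO_K$ in $L$. Nondiscreteness of $v$ forces $\cM_K^2 = \cM_K$, and every element of $\cO_L$ can be $v$-approximated arbitrarily closely by an element of a fixed finitely generated sub-$\cO_K$-module; this yields almost finite generation, and a second approximation applied to the syzygies gives almost finite presentation. For the almost perfectness of the trace form, I would use the classical differential-different dictionary: for a primitive element $\beta \in \cO_L$ with minimal polynomial $g$, one has $\Omega_{\cO_K[\beta]|\cO_K} \simeq \cO_K[\beta]/(g'(\beta))$, and $(g'(\beta))$ generates the different of $\cO_K[\beta]/\cO_K$. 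Feeding this into the conormal exact sequence for $\cO_K \to \cO_K[\beta] \to \cO_L$ and using the hypothesis that $\Omega_{\cO_L|\cO_K}$ is almost zero, I would deduce that the different of $\cO_L/\cO_K$ is almost equal to $\cO_L$, which is precisely the almost-perfectness of the trace pairing.

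The main obstacle is the possible non-monogenicity of $\cO_L/\cO_K$: when $\cO_L$ strictly contains every subring $\cO_K[\beta]$, as can happen in a defect extension, the conormal sequence transmits information from $\Omega_{\cO_L|\cO_K}$ back to $\Omega_{\cO_K[\beta]|\cO_K}$ only up to torsion supported on the quotient $\cO_L/\cO_K[\beta]$. To close this gap, I would vary $\beta$ through a cofinal family of primitive elements of $L/K$ inside $\cO_L$ and exploit the idempotence $\cM_K^2 = \cM_K$ to absorb the approximation error, ensuring that for suitable $\beta$ the element $g'(\beta)$ really is almost a unit in $\cO_L$. It is here that the rank $1$ nondiscrete hypothesis is indispensable, since it is what legitimizes the almost-mathematical limit arguments.
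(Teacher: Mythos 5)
Your forward implication is exactly the paper's: almost projectivity of $\cO_L$ as a $\cO_L\otimes_{\cO_K}\cO_L$-module gives almost flatness (Lemma \ref{Lemma5}), and Lemma \ref{Lemma4} then shows $\Omega_{\cO_L|\cO_K}$ is almost zero. Your treatment of almost finite presentation also tracks the paper's Proposition \ref{Prop4}: the trace form yields the sandwich $\sum e_i\cO_K\subset\cO_L\subset a^{-1}(\sum e_i\cO_K)$, and one must then rule out an infinite strictly increasing chain of finitely generated submodules all of whose successive quotients have annihilator contained in $\epsilon\cO_K$ (the paper does this with Fitting ideals and the rank~1 hypothesis). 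Your one-line assertion that every element of $\cO_L$ is ``$v$-approximated arbitrarily closely by an element of a fixed finitely generated submodule'' is precisely this nontrivial step, so it needs an argument, but the strategy is sound.

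The genuine gap is in the almost unramifiedness. You reduce it to almost perfectness of the trace pairing, i.e.\ to the different of $\cO_L/\cO_K$ being almost trivial. But the implication ``trace form almost perfect $\Rightarrow$ $\cO_L$ is an almost projective $\cO_L\otimes_{\cO_K}\cO_L$-module'' is itself a substantial theorem of almost ring theory (Gabber and Ramero's theory of the different ideal of an almost finite projective morphism); it is not among the tools developed in Section \ref{Section2}, and the paper deliberately routes around it. Second, even granting that criterion, your differential--different dictionary does not transfer information for a fixed primitive element $\beta$: the conormal sequence for $\cO_K\to\cO_K[\beta]\to\cO_L$ only controls the image of $\Omega_{\cO_K[\beta]|\cO_K}\otimes\cO_L$ in $\Omega_{\cO_L|\cO_K}$, and in a defect extension every monogenic subring has a nontrivial conductor, so $v(g'(\beta))$ stays bounded away from $0$ for each fixed $\beta$. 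Your proposed fix --- varying $\beta$ cofinally and ``absorbing the approximation error'' --- is exactly the heart of the converse and is left unproved. The paper closes this by a direct computation: for each $\epsilon$ it chooses a free subalgebra $B_\epsilon\cong\cO_K^n$ with $\epsilon\cO_L\subset B_\epsilon$, shows that $\epsilon\Omega_{\cO_L|\cO_K}=0$ forces $\epsilon^5J_\epsilon\subset J_\epsilon^2$ for the finitely generated diagonal ideal $J_\epsilon$ of $B_\epsilon\otimes_{\cO_K}B_\epsilon$, and then applies the determinant trick to manufacture elements $e_\epsilon$ with $e_\epsilon^2=\epsilon^{5n}e_\epsilon$, $\mu(e_\epsilon)=\epsilon^{5n}$ and $e_\epsilon J_*=0$, which assemble into the idempotent $e\in(\cO_L\otimes_{\cO_K}\cO_L)_*$ required by the criterion of Proposition \ref{Prop6}. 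Some such quantitative step, replacing the appeal to the almost trace-form criterion, is what your proposal is missing.
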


The proofs given in \cite[Theorem 6.6.12 and Proposition 6.6.2]{GR} for the statements of  Theorem \ref{GRThm2} and Theorem \ref{GRThm1}   are very difficult. They are  in the later part of the book \cite{GR} and the proofs use much of the preceding material of the book.  The proofs which we give here and in \cite{CK} are simpler proofs of these results which are intended to be widely accessible.  
The statement and our proof of Theorem \ref{GRThm2} do not use almost mathematics. However, almost mathematics is required for the proof of Theorem \ref{GRThm1}, as the statement  of the proposition is itself in almost mathematics. 

In Section \ref{Section2} we develop the necessary almost mathematics for the proof of Theorem \ref{GRThm1}. This section is intended as a development of the basics of almost mathematics, and should be accessible to a broad audience. 

Almost mathematics was introduced by Faltings in \cite{Fa} and developed by Gabber and Ramero in a categorical framework in \cite{GR}. Faltings refers to the paper \cite{T} by Tate as an inspiration of his work, and in the paper \cite{CG} where deeply ramified fields are introduced, Coates and Greenberg also  refer to Tate's paper \cite{T} as a motivation for their work. The theorems in Section \ref{Section2} are those of \cite{GR}, but are stated and proved using ordinary mathematics. We do not use the category of almost mathematics which is defined and developed in \cite{GR}; all our statements and proofs are in terms of ordinary commutative algebra. The essential  ideas of the proofs of the main results of this section are  from \cite{GR} and \cite{Fa}.  We prove the results in Section \ref{Section2} in the full generality stated in \cite{GR}, even though this is not necessary for their application in Section \ref{Section3}.  In particular, our proofs in this section are valid for $R$-modules and $R$-algebras which have $R$-torsion. We mention a couple of recent papers, \cite{NS} by Nakazato and Shimomoto and \cite{IS}, by Ishiro and Shimomoto, which prove interesting results about almost mathematics, but are written in a way to be  readily understandable by algebraists.  Recently, many deep theorems in commutative algebra (especially the direct summand conjecture in mixed characteristic) have been proven using almost mathematics. The author mentions \cite{An1}, \cite{An2}, \cite{Bha} and \cite{CLMST}.

The author thanks the reviewer for suggestions improving the exposition of the paper.

\section{Relative K\"ahler differentials of extensions of valuation rings}\label{Section0} A valued field $(K,v)$ is a field $K$ with a valuation $v$; that is, there exists a totally ordered abelian group $vK$ called the value group of $v$ such that $v$ is a mapping  $v:K\setminus \{0\}\rightarrow vK$ satisfying $v(ab)=v(a)+v(b)$ and  $v(a+b)\ge\min\{v(a),v(b)\}$. We will set $v(0)=\infty$, where $\infty$ is larger than any element of $vK$.

The valuation ring $\mathcal O_K=\{f\in K\mid v(f)\ge 0\}$ of $v$ is a local ring with maximal ideal 
$\mathcal M_K=\{f\in K\mid v(f)>0\}$, and residue field $Kv=\mathcal O_K/\mathcal M_K$. The ring $\mathcal O_K$ is Noetherian if and only if $vK\cong \Z$ (Theorem 16, Chapter VI, Section 10, page 41 \cite{ZS2}).

A subgroup $\Delta$ of a value group $vK$ is called a convex subgroup if whenever an element $\alpha$ of $\Gamma$ belongs to $\Delta$ then all the elements $\beta$ of $vK$ which lie between $\alpha$ and $-\alpha$  also belong to $\Delta$. The set of all convex subgroups of $vK$ is totally ordered by the relation of inclusion. This concept is discussed on page 40 of \cite{ZS2}. 
In $(\R^n)_{\rm lex}$, $\R^n$ with the lexicographic order, the convex subgroups are
$$
0\subset e_n\R\subset e_{n-1}\R+e_{n}\R\subset \cdots\subset e_1\R+\cdots+e_n\R=(\R^n)_{\rm lex}.
$$
The convex subgroups $\Delta$ form a chain in $vK$. The prime ideals $P$ in $\mathcal O_K$ also form a chain.
The map $P\mapsto vK\setminus \pm\{v(f)\mid f\in P\}$ is a 1-1 correspondence from the prime ideals of $\mathcal O_K$ to the convex subgroups of $vK$ (Theorem 15, Chapter VI, Section 10, page 40 \cite{ZS2}). The rank of $v$, $\mbox{rank}(v)$ is the
order type of the chain of proper convex subgroups of $vK$.

 cardinality of convex subgroups of $vK$; 

If $K$ is an algebraic function field, then $v$ has finite rank, which implies that there exists some $n>0$ and an order preserving monomorphism $vK\rightarrow (\R^n)_{\rm lex}$. 

The theory of henselian fields is developed in Chapter 4 of \cite{EP}. A valued field $(K,v)$ is henselian if $v$ has a unique extension to every algebraic extension $L$ of $K$. Every valued field has a henselization. The henselization $K^h$ of $K$ is an extension of $K$ which is characterized (\cite[Theorem 5.2.2]{EP}) by the universal property that if $K\rightarrow K_1$ is an extension of valued fields such that $K_1$ is henselian, then there exists a unique homomorphism  $K^h\rightarrow K_1$, such that
$K\rightarrow K^h\rightarrow K_1$ is equal to the homomorphism $K\rightarrow K_1$. We have
\begin{equation}\label{eq40}
\mathcal O_{K^h}\cong (\mathcal O_K)^h
\end{equation}
where $\mathcal O_{K^h}$ is the valuation ring of the valued field $K^h$ and $(\mathcal O_K)^h$ is the henselization of the local ring $\mathcal O_K$ (by  \cite[Lemma 5.8]{CK}). By  \cite[Lemma 5.11]{CK}, if $(L/K,v)$ is a finite separable extension of valued fields, then
\begin{equation}\label{eq30}
\Omega_{\mathcal O_L^h|\mathcal O_K^h}\cong (\Omega_{\mathcal O_L|\mathcal O_K})\otimes_{\mathcal O_L}\mathcal O_{L^h},
\end{equation}
and so, 
$$
\Omega_{\mathcal O_L|\mathcal O_K}=0\mbox{ if and only if }\Omega_{\mathcal O_L^h|\mathcal O_K^h}=0
$$
since $\mathcal O_L\rightarrow \mathcal O_{L^h}$ is faithfully flat (by Lemma \ref{Lemma50}).

Recall that a Kummer extension $L/K$ is an extension $L=K[\vartheta]$ where $\vartheta^q-a=0$ for some $a\in K$,  the characteristic of $K$ does not divide $q$ and $K$ contains a primitive $q$-th root of unity. An Artin-Schreier extension is an extension $L=K[\vartheta]$ where $K$ has characteristic $p>0$ and $\vartheta^p-\vartheta-a=0$ for some $a\in K$.

In \cite{CKR} and \cite{CK}, we establish the following theorem by a case by case analysis of all types of ramification. 

\begin{theorem}\label{Theorem1*}(\cite[Theorem 1]{CK})
Suppose that $(K,v)$ is a valued field and $L$ is a Kummer extension of prime degree or an Artin-Schreier extension of $K$.  Then there is an explicit description of $\mathcal O_L$ as an $\mathcal O_K$-algebra and an explicit description of $\Omega_{\mathcal O_L|\mathcal O_K}$, giving a characterization of when $\Omega_{\mathcal O_L|\mathcal O_K}=0$. There are different  formulas  for different cases of invariants of the extension of valuations. Each case is of a completely different character and requires a different analysis.
\end{theorem}

We give the proof of the computation of $\Omega_{\mathcal O_L|\mathcal O_K}$, assuming the classification theorems of \cite{CK} and \cite{CKR}, and then discuss something of the method of proof of the classification theorems. In Section \ref{SectionN}, we will state explicitely what these theorems say in the special case of nondiscrete valuations of rank 1. These results will be used in the proof of Theorem \ref{Theorem4'}.

\begin{proof} 
Let $p$ be the characteristic of the residue
field $Kv$ and $q = [L : K]$ be a prime number. The description of $\Omega_{\cO_L|\cO_K}$  and the
characterization of vanishing of this module depend, among other information, on
the invariants of the valued field extension that appear in the following product:
$$
q = [L : K] = d(L|K) e(L|K) f(L|K) g(L|K)
$$
where $e(L|K) = (vL : vK)$, $f(L|K) = [Lv : Kv]$, $g(L|K)$ is the number of distinct
extensions of $v|K$ to $L$ and $d(L|K)$ is the defect of the extension, which is a power
of $p$. Since $q$ is a prime, exactly one of the factors will be equal to $q$, and the others
equal to 1. The description of $\Omega_{\cO_L|\cO_K}$ also depends on the rank and the structure
of the value group of $(K, v)$ if $d(L|K) = q$ or $e(L|K) = q$, and on whether $Lv|Kv$
is separable or inseparable if $f(L|K) = q$.

In the case of $d(L|K) = p$, The conclusions of Theorem \ref{Theorem1*} are proven in \cite[Theorems 4.2, 4.3 and 1.4]{CKR}. In the
case of $e(L|K) = q$, they are obtained in \cite[Theorem 4.6]{CK} for Artin-Schreier extensions
and \cite[Theorem 4.8]{CK} for Kummer extensions. If $f(L|K) = q$, then they are obtained
in \cite[Theorem 4.5]{CK} for Artin-Schreier extensions and in \cite[Proposition 5.6]{CK} (or \cite[Theorem 4.4]{CK})  and \cite[Theorem 4.7]{CK}
for Kummer extensions.
In the remaining case when $g(L|K) = q$, the extension $(L|K, v)$ is an inertial
extension. Thus $\Omega_{\cO_L|\cO_K}= 0$ by \cite[Proposition 5.6]{CK}.
\end{proof}

The case of Theorem \ref{Theorem1*} which contains the essential difference between  characteristic zero and positive characteristic $p$ of the residue field $Kv$ is when $K\rightarrow L$ is a defect extension of prime degree.
Defect is defined in the introduction of this paper. In this case, $[L:K]=p$, $vK=vL$ and $Kv=Lv$.
This is the case which destroys all attempts to resolve singularities in positive and mixed characteristic. 

Suppose that $K$ has characteristic $p>0$, and $K\rightarrow L$ is a defect Artin-Schreier extension. Let $\vartheta$ be a Artin-Schreier generator of $L/K$. We then have that 
$$
v(\vartheta-K)=\{v(\vartheta-c)\mid c\in K\}\subset vL_{<0}.
$$
The ramification ideal of $\mathcal O_L$, defined using Galois theory (\cite[Section 2.6]{CKR}), is 
$$
I_r=(a\in L\mid v(a)\in -v(\vartheta-K)).
$$
The defect is independent if $I_r^p=I_r$, or equivalently, 
 if $vK\setminus \pm v(\vartheta-K)$ is a convex subgroup of $vK$ (\cite[Theorems 1.2, 1.4]{CKR}).
 
 In the case of defect extensions, Theorem \ref{Theorem1*} is the following. 

\begin{theorem}\label{theorem2*} (\cite[Theorems 4.2 and 1.4]{CKR}) Suppose that $(L/K,v)$ is a defect Artin-Schreier extension. Then $\Omega_{\mathcal O_L|\mathcal O_K}\cong I_r/I_r^p$. Thus $\Omega_{\mathcal O_L|\mathcal O_K}=0$ if and only if $L/K$ is independent.
\end{theorem}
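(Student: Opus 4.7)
The plan is to realize $\cO_L$ as a filtered colimit of monogenic $\cO_K$-subalgebras built from normalized Artin--Schreier generators, compute K\"ahler differentials stage by stage, and identify the colimit with $I/I^p$. For each $c \in K$ I set $\vartheta_c := \vartheta - c$, a new Artin--Schreier generator satisfying $\vartheta_c^p - \vartheta_c = a_c$ with $a_c := a + c - c^p$. Since the defect hypothesis forces $vK = vL$, I pick $b_c \in K$ with $v(b_c) = -v(\vartheta_c) > 0$ and set $\eta_c := b_c\vartheta_c \in \cO_L$. Substituting $\vartheta_c = \eta_c/b_c$ in the Artin--Schreier relation produces the monic integral equation
\[
\eta_c^p - b_c^{p-1}\eta_c - u_c = 0, \qquad u_c := b_c^p a_c,
\]
in which $u_c \in \cO_K^\times$ because $v(a_c) = p\,v(\vartheta_c)$ forces $v(u_c) = 0$.

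Next, whenever $c'$ is a better approximation to $\vartheta$ than $c$ (that is, $v(\vartheta - c') > v(\vartheta - c)$), expansion gives $\eta_c = (b_c/b_{c'})\eta_{c'} + b_c(c' - c)$, where $b_c/b_{c'} \in \cM_K$ and $b_c(c' - c) \in \cO_K^\times$; thus $\cO_K[\eta_c] \subset \cO_K[\eta_{c'}]$. The standard approximation argument in \cite{CKR} (using both $vK = vL$ and $Kv = Lv$) then yields $\cO_L = \varinjlim_c \cO_K[\eta_c]$. Because $\chara K = p$, differentiating the defining relation of $\eta_c$ kills the $p\eta_c^{p-1}d\eta_c$ term and leaves the single relation $b_c^{p-1} d\eta_c = 0$, so $\Omega_{\cO_K[\eta_c]|\cO_K} \cong \cO_K[\eta_c]/(b_c^{p-1})$ via $d\eta_c \leftrightarrow 1$. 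Since K\"ahler differentials commute with filtered colimits of $\cO_K$-algebras, the stagewise computation passes to $\Omega_{\cO_L|\cO_K} = \varinjlim_c \cO_L/(b_c^{p-1})$ with transition maps induced by $d\eta_c = (b_c/b_{c'}) d\eta_{c'}$.

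To identify this colimit with $I/I^p$, note that $I = \bigcup_c b_c\cO_L$ by construction and $I^p = \bigcup_c b_c^p\cO_L$ by directedness of the $b_c$ in the valuation domain (any $p$-fold product of generators can be absorbed into a single $b_{c''}^p$ after passing to a sufficiently fine $c''$). I define $\varphi:I \to \Omega_{\cO_L|\cO_K}$ by $\varphi(a) := (a/b_c)\,d\eta_c$ for any $c$ with $a \in b_c \cO_L$; well-definedness across choices of $c$ follows by passing to a common refinement and invoking the transition identity. The map is surjective since $\varphi(b_c) = d\eta_c$ hits the generator at the stage $c$. For the kernel, $\varphi(a) = 0$ is equivalent to $(a/b_{c''}) \in b_{c''}^{p-1}\cO_L$ for some refinement $c''$, hence to $a \in b_{c''}^p \cO_L \subseteq I^p$; conversely $\varphi(b_{c''}^p) = b_{c''}^{p-1} d\eta_{c''} = 0$. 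Thus $\Omega_{\cO_L|\cO_K} \cong I/I^p$, and the second claim of the theorem is immediate from the characterization $I^p = I \Leftrightarrow L/K$ independent recalled in the excerpt. The main obstacle I expect is this last identification step: constructing $\varphi$ as a well-defined map on all of $I$ and precisely matching $\ker\varphi$ with $I^p$ demands careful colimit bookkeeping and relies on the directedness of $(b_c)$ to consolidate arbitrary $p$-fold products into single-stage terms.
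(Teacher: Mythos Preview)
Your proposal is correct and follows essentially the same approach as the paper's sketch: your $\eta_c=b_c(\vartheta-c)$ is precisely what the paper denotes $\vartheta_c=t_{v(\vartheta-c)}(\vartheta-c)$, and the argument proceeds identically by writing $\cO_L=\varinjlim_c\cO_K[\eta_c]$, computing $\Omega_{\cO_K[\eta_c]|\cO_K}\cong\cO_K[\eta_c]/(b_c^{p-1})$ from the monic relation $\eta_c^p-b_c^{p-1}\eta_c-u_c=0$, and identifying the colimit with $I/I^p$ via the transition maps $d\eta_c=(b_c/b_{c'})\,d\eta_{c'}$. Your explicit map $\varphi$ and its kernel computation simply make concrete the final identification step that the paper leaves to the reader.
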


We give an idea of the proof in \cite{CKR}. For $\gamma\in vL=vK$, there exists $t_{\gamma}\in K$ such that $v(t_{\gamma})=-\gamma$. Let $\vartheta$ be an Artin-Schreier generator. We have that 
$$
\mathcal O_L=\cup_{c\in K}\mathcal O_K[\vartheta_c],
$$
where $\vartheta_c=t_{v(\vartheta-c)}(\vartheta-c)$. We also have that
$$
\Omega_{\mathcal O_L|\mathcal O_K}=\lim_{\rightarrow}\left((\Omega_{\mathcal O_K[\vartheta_c]|\mathcal O_K})\otimes_{\mathcal O_{K[\vartheta_c]}}\mathcal O_L\right).
$$
The proof of Theorem \ref{theorem2*} now follows from the computation of the K\"ahler differentials $\Omega_{\mathcal O_K[\vartheta_c]|\mathcal O_K}$ and the induced homomorphisms between them.

We may compute the K\"ahler differentials of an extension of valuation rings in a tower of   finite Galois extensions, using Equation (\ref{eq30}) and the following Proposition \ref{Prop3*} and Theorem \ref{Theorem4*}, to reduce the computation  to the case of Artin-Schreier and Kummer extensions. The K\"ahler differentials of the Artin-Schreier and Kummer extensions are then computed using Theorem \ref{Theorem1*}.

\begin{proposition}\label{Prop3*}(\cite[Proposition 4.1]{CK})
Suppose that $(L/K,v)$ is finite Galois and $(K,v)$ is henselian.  Then there exists a subextension $M$ of $K^{\rm sep}/L$ such that  $M/K$ is finite Galois and such that there is a factorization
$$
K\rightarrow (M|K)^{\rm in }=M_0\rightarrow M_1\rightarrow \cdots \rightarrow M_r=M
$$
where $(M|K)^{\rm in }$ is the inertia field of $M/K$ and  each $M_{i+1}/M_i$ is a unibranched Kummer extension of prime degree or an Artin-Schreier extension. 
\end{proposition}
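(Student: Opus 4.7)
The plan is to use the classical ramification filtration of a suitable enlargement of $L$. Since $K$ is henselian and every $M_i$ I construct will be an algebraic extension of $K$, each $M_i$ is itself henselian, so the valuation on $M_i$ extends uniquely to its algebraic extensions and the unibranched condition is automatic throughout the tower. The essential task is therefore to produce a finite Galois $M\supseteq L$ over $K$ whose ramification-theoretic slices can each be refined into prime-degree cyclic extensions of the desired Artin-Schreier or Kummer type.

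Set $p=\chara Kv$. If $\chara K=0$ and $p>0$, replace $L$ by $M:=L(\zeta_p)$, which is still finite Galois over $K$; otherwise take $M=L$. Consider the standard filtration
$$
K\subseteq (M|K)^{\rm in}\subseteq (M|K)^{\rm r}\subseteq M,
$$
where $(M|K)^{\rm r}$ denotes the maximal tamely ramified subextension. Both intermediate fields are Galois over $K$; the group $\Gal((M|K)^{\rm r}/(M|K)^{\rm in})$ is cyclic of some order $e$ prime to $p$, and $\Gal(M/(M|K)^{\rm r})$ is a $p$-group. Since $(M|K)^{\rm r}/(M|K)^{\rm in}$ is totally tamely ramified cyclic of degree $e$, the structure theory of tame extensions of henselian fields gives $(M|K)^{\rm r}=(M|K)^{\rm in}(\pi^{1/e})$ for a uniformizer $\pi$, and Galois-ness forces $\zeta_e\in (M|K)^{\rm r}$. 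Because $K(\zeta_e)/K$ has degree prime to $p$ and $x^e-1$ is separable modulo $\mathcal{M}_K$, this extension is unramified, so in fact $\zeta_e\in (M|K)^{\rm in}$. In the mixed characteristic case, $K(\zeta_p)/K$ has degree dividing $p-1$ and is therefore tame, giving $\zeta_p\in (M|K)^{\rm r}$.

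Next I would refine each slice. A composition series of the cyclic group of order $e$ into cyclic factors of prime order $\ell\mid e$, passed through fixed fields, produces a tower from $(M|K)^{\rm in}$ up to $(M|K)^{\rm r}$ of cyclic extensions of prime degree; each intermediate base contains $(M|K)^{\rm in}$ and hence $\zeta_\ell$, so each step is Kummer of prime degree. The $p$-group $\Gal(M/(M|K)^{\rm r})$ is nilpotent, hence admits a normal series with successive quotients cyclic of order $p$; the associated tower from $(M|K)^{\rm r}$ to $M$ consists of cyclic extensions of degree $p$. If $\chara K=p$, each such step is Artin-Schreier. If $\chara K=0$, each base in this part of the tower contains $(M|K)^{\rm r}$ and therefore $\zeta_p$, so each step is Kummer of degree $p$. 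Concatenating yields the required factorization of $M/(M|K)^{\rm in}$.

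The main obstacle is keeping track of where the various roots of unity first appear in the ramification filtration. The convenient fact is that $\zeta_e$, for $e$ the tame inertia index, sits automatically in $(M|K)^{\rm in}$ by Galois-ness together with the unramifiedness of $K(\zeta_e)/K$. The delicate fact is that $\zeta_p$ in mixed characteristic is tame but generally ramified, as in $\mathbb{Q}_p(\zeta_p)/\mathbb{Q}_p$, which is totally tamely ramified of degree $p-1$; so $\zeta_p$ need not lie in the inertia field, and this is the reason for the initial enlargement to $L(\zeta_p)$, which guarantees $\zeta_p\in (M|K)^{\rm r}$ and thereby allows the wild tower to be exhibited as a chain of degree-$p$ Kummer extensions.
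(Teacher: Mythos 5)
Your overall strategy --- enlarge $L$ to $M=L(\zeta_p)$ in mixed characteristic, run the ramification filtration $K\subseteq (M|K)^{\rm in}\subseteq (M|K)^{\rm r}\subseteq M$, and refine the tame and wild slices by composition series into prime-degree cyclic steps that are then recognized as Kummer or Artin--Schreier --- is the right one, and it is the kind of Galois/ramification-theoretic argument the paper relies on. The handling of the wild part is fine: $\Gal(M/(M|K)^{\rm r})$ is a $p$-group, any composition series gives degree-$p$ cyclic steps, and your observation that $K(\zeta_p)/K$ is tame (degree dividing $p-1$) correctly places $\zeta_p$ in $(M|K)^{\rm r}$, so every base field in the wild tower contains $\zeta_p$ in the mixed-characteristic case. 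The unibranched condition is indeed automatic from henselianity.

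The genuine gap is in your description of the tame slice. The proposition is for valued fields of \emph{arbitrary} rank, not necessarily discrete, so there is no uniformizer $\pi$ and no reason for $\Gal((M|K)^{\rm r}/(M|K)^{\rm in})$ to be cyclic: in general this group is only abelian of order prime to $p$ (e.g.\ for a rank-$2$ valuation one easily gets $(\Z/\ell)^2$ as tame inertia). Consequently your derivation ``$(M|K)^{\rm r}=(M|K)^{\rm in}(\pi^{1/e})$, Galois-ness forces $\zeta_e\in(M|K)^{\rm r}$'' does not apply as written. The conclusion you need is still true and should be obtained from the general structure theorem for henselian fields: $\Gal((M|K)^{\rm r}/(M|K)^{\rm in})$ embeds into $\mathrm{Hom}(vM/vK,\,(Mv)^\times)$, so for every prime $\ell$ dividing its order the residue field $Mv$ contains a primitive $\ell$-th root of unity; since $\ell\neq\chara Kv$, these roots of unity are separable over $Kv$, hence lie in the residue field of the inertia field, and Hensel's lemma lifts them to $(M|K)^{\rm in}$ itself. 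With that substitution, each step of a composition series of the (abelian, not merely cyclic) tame quotient is cyclic of prime degree $\ell$ over a base containing $\zeta_\ell$, hence Kummer, and the rest of your argument goes through unchanged.
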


The extension $(M|K)^{\rm in }/K$ is the largest subextension $N/K$ in $M/K$ for which $\mathcal O_K\rightarrow \mathcal O_N$ is \'etale local. We  have that 
\begin{equation}\label{eq42}
\Omega_{\cO_{(M|K)^{\rm in }}|\mathcal O_K}=0
\end{equation}
by  \cite[Theorem 5.5]{CK}.
The proof of Proposition \ref{Prop3*} is by Galois theory and ramification theory.

\begin{theorem}\label{Theorem4*}(\cite[Theorem 5.1]{CK}) Suppose that $(L/K,v)$ and $(M/L,v)$ are towers of finite Galois extensions of valued fields. Then
$$
0\rightarrow \Omega_{\mathcal O_L|\mathcal O_K}\otimes_{\mathcal O_L}\mathcal O_M\rightarrow \Omega_{\mathcal O_M|\mathcal O_K}\rightarrow \Omega_{\mathcal O_M|\mathcal O_L}\rightarrow 0
$$
is a short exact sequence of $\mathcal O_M$-modules. 
\end{theorem}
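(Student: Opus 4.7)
The cotangent sequence
\[
\Omega_{\cO_L|\cO_K}\otimes_{\cO_L}\cO_M\to \Omega_{\cO_M|\cO_K}\to \Omega_{\cO_M|\cO_L}\to 0
\]
is right exact by the standard fundamental cotangent sequence of a tower (cf.\ \cite{Mat}), so the theorem reduces to showing that the leftmost arrow is injective. My first step is to reduce to the henselian case: applying (\ref{eq30}) to the extensions $L/K$, $M/K$, and $M/L$ together with the faithful flatness of $\cO_M\to \cO_{M^h}$ from Lemma \ref{Lemma50}, the injectivity in question is equivalent to the corresponding injectivity after henselization. Since henselization preserves Galois-ness, we may assume $(K,v)$ is henselian; then $L/K$ and $M/L$ are unibranched Galois extensions. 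Applying Proposition \ref{Prop3*} to $L/K$ and to $M/L$---and using that both extensions are already Galois to arrange that the resulting refinements end exactly at $L$ and at $M$ respectively---we obtain a chain $K=F_0\subset F_1\subset \cdots \subset F_a = L \subset F_{a+1} \subset \cdots \subset F_n = M$ in which every elementary step is either an inertia extension, an Artin-Schreier extension, or a unibranched Kummer extension of prime degree.

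The key step is then to verify the short exact sequence for each three-term subtower $F_0\to F_{n-1}\to F_n$ of this refinement. The inertia case is trivial, since the K\"ahler differential of the top step vanishes by (\ref{eq42}) and the sequence follows by \'etale base change. For an Artin-Schreier or Kummer top step, the description of $\cO_{F_n}$ recalled in the discussion after Theorem \ref{theorem2*} writes $\cO_{F_n}$ as a filtered union of finitely generated $\cO_{F_{n-1}}$-subalgebras, each of the form $\cO_{F_{n-1}}[\vartheta]\cong\cO_{F_{n-1}}[x]/(f(x))$ for some separable monic polynomial $f$. At each finite stage, $\Omega_{\cO_{F_{n-1}}[\vartheta]|\cO_{F_0}}$ is the quotient of $(\Omega_{\cO_{F_{n-1}}|\cO_{F_0}}\otimes_{\cO_{F_{n-1}}}\cO_{F_{n-1}}[\vartheta])\oplus \cO_{F_{n-1}}[\vartheta]\,d\vartheta$ by the single relation coming from $df=0$; since $\cO_{F_{n-1}}[\vartheta]$ is a domain and $f'(\vartheta)\neq 0$ by separability, this relation does not annihilate any nonzero element of the first summand, and the cotangent sequence at the finite stage is short exact. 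Passing to the filtered colimit, which preserves exactness, yields short exactness for the three-term tower $F_0\to F_{n-1}\to F_n$.

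The short exact sequence for the original tower $K\subset L\subset M$ is then assembled by iterating the following $3\times 3$ diagram chase: if the cotangent sequences for $A\to B\to C$ and for $A\to C\to D$ are short exact and $C\to D$ is flat, then so is the sequence for $A\to B\to D$, since tensoring the first sequence with $D$ over $C$ preserves exactness by flatness and the two resulting injections $\Omega_{B|A}\otimes_B D\hookrightarrow \Omega_{C|A}\otimes_C D\hookrightarrow \Omega_{D|A}$ compose to the first map in the cotangent sequence for $A\to B\to D$. The required flatness is automatic because every extension of valuation rings in our tower is torsion-free and hence flat. Induction on $n$ using this chase and the elementary case of the previous paragraph then yields short exactness for $F_0\to F_i\to F_n$ for every $i$, and in particular for $i=a$, which is the desired sequence for $K\subset L\subset M$. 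The main obstacle I anticipate is the careful verification of the finite-stage computation in the Artin-Schreier and Kummer cases, particularly in the defect situations of Theorem \ref{theorem2*}, where the filtered family of generators and the resulting finite algebra structure over $\cO_{F_{n-1}}$ must be tracked precisely.
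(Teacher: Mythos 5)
Your architecture (right-exactness from \cite[Theorem 25.1]{Mat}, henselization via (\ref{eq30}) and faithful flatness, an elementary case for prime-degree top steps proved by writing $\cO_{F_n}$ as a filtered union of monogenic subalgebras $\cO_{F_{n-1}}[y]/(g)$ and checking that $B\cdot dg$ meets $\Omega_{\cO_{F_{n-1}}|\cO_{F_0}}\otimes B$ trivially because $g'(\vartheta)$ is a nonzerodivisor, and then a composition lemma) is sound in outline, and the finite-stage computation together with exactness of filtered colimits is correct where it applies. But there is a genuine gap at the refinement step, and it is where all the difficulty of the theorem lives. Proposition \ref{Prop3*} does \emph{not} produce a chain of elementary steps from $K$ to $M$ passing through $L$; it produces a chain from $K$ to some \emph{enlargement} $M'\supseteq M$, and nothing forces any member of that chain to equal $L$ (or $M$). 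Your parenthetical ``using that both extensions are already Galois to arrange that the resulting refinements end exactly at $L$ and at $M$'' is unsupported and is false in general: if $L/K$ is cyclic totally ramified of prime degree $q\neq p$ and $K$ lacks a primitive $q$-th root of unity (e.g.\ a ramified cubic extension of $\Q_5$), then the only elementary step that could enter $L$ from below would be a Kummer extension $K\to L$, which requires $\zeta_q\in K$; so \emph{no} chain of inertia/Artin--Schreier/Kummer steps passes through $L$, and your induction can never produce the sequence for $K\subset L\subset M$. The ``ends at $M$'' half of the problem is actually harmless --- if the sequence is exact for $K\subset L\subset M'$ with $M\subseteq M'$, then factoring $\Omega_{\cO_L|\cO_K}\otimes_{\cO_L}\cO_{M'}\to\Omega_{\cO_{M'}|\cO_K}$ through $\Omega_{\cO_M|\cO_K}\otimes_{\cO_M}\cO_{M'}$ and using faithful flatness of $\cO_M\to\cO_{M'}$ recovers injectivity for $K\subset L\subset M$ --- but the ``passes through $L$'' half requires a new idea (e.g.\ first adjoining the needed roots of unity over $K$, comparing $\Omega_{\cO_L|\cO_K}$ with $\Omega_{\cO_{L(\zeta)}|\cO_{K(\zeta)}}$ by \'etale base change along the unramified extension $K(\zeta)/K$, and only then refining), and you have not supplied it.

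A secondary, smaller gap: the description of $\cO_{F_n}$ as a \emph{filtered} union of monogenic $\cO_{F_{n-1}}$-subalgebras is recalled in this paper only for defect Artin--Schreier extensions; for the remaining ramification types of Theorem \ref{Theorem1*} you are implicitly invoking the explicit structure theorems of \cite{CKR} and \cite{CK}. This matters because your conormal-module argument genuinely needs a single relation: for a finitely generated subalgebra $\cO_{F_{n-1}}[\theta_1,\dots,\theta_m]$ with a non-principal defining ideal, an element of the image of $I/I^2$ can have vanishing $dx_i$-components without its coefficient vector being zero, and injectivity at that finite stage is no longer automatic. So either cite the monogenic filtered-union structure case by case, or restrict the colimit to a cofinal filtered family of monogenic subalgebras and justify that such a family exists in every case. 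With those two points repaired, the assembly step (your $3\times 3$ chase plus flatness of valuation ring extensions from Lemma \ref{Lemma50}) is fine.
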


The first fundamental exact sequence,  \cite[Theorem 25.1]{Mat}, reduces the proof to establishing injectivity of the first map. As a consequence of Theorem \ref{Theorem4*}, we have that 
$$
\Omega_{\mathcal O_M|\mathcal O_K}=0\mbox{ if and only if }\Omega_{\mathcal O_M|\mathcal O_L}=0\mbox{ and } \Omega_{\mathcal O_L|\mathcal O_K}=0.
$$
This follows since an extension of valuation rings is faithfully flat (Lemma \ref{Lemma50}).

\section{A characterization of deeply ramified fields}\label{Section1} 
 Deeply ramified fields were introduced for algebraic extensions of $p$-adic fields by  Coates and Greenberg \cite{CG} for algebraic extensions of $p$-adic fields. They cite Tate's paper \cite{T} for inspiration for this concept. Gabber and Ramero generalize this  in \cite{GR}.
They prove the following theorem, restated from Theorem \ref{GRThm2} in the introduction.

\begin{theorem}(\cite[Theorem 6.6.12]{GR})\label{GRThm2*}
Let $(K,v)$ be a valued field, and identify $v$ with an extension of $v$ to a separable closure $K^{\rm sep}$ of $K$. Then the following two conditions are equivalent.
\begin{enumerate}
\item[1)] $\Omega_{\mathcal O_{K^{\rm sep}}|\mathcal O_K}=0$.
\item[2)] Whenever $\Gamma_1\subsetneq\Gamma_2$ are convex subgroups of the value
group $vK$, then $\Gamma_2/\Gamma_1$ is not isomorphic to $\Z$. Further, if 
 $\chara Kv=p>0$, then the homomorphism
\begin{equation}                          \label{homOpO*}
\cO_{\hat K}/p\cO_{\hat K} \ni x\mapsto x^p\in \cO_{\hat K}/p\cO_{\hat K}
\end{equation}
is surjective, where $\cO_{\hat K}$ denotes the valuation ring of the completion 
$\hat K$ of $(K,v)$. 
\end{enumerate}
\end{theorem}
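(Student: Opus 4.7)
The plan is to reduce to the classification of prime-degree Artin--Schreier and Kummer extensions established in Theorems \ref{Theorem1*} and \ref{theorem2*}. Since K\"ahler differentials commute with direct limits, $\Omega_{\cO_{K^{\rm sep}}|\cO_K} = \varinjlim \Omega_{\cO_L|\cO_K}$ as $L$ ranges over the finite Galois subextensions of $K^{\rm sep}/K$. Combining this with Theorem \ref{Theorem4*} and faithful flatness of valuation ring extensions (Lemma \ref{Lemma50}) — which together make the transition maps $\Omega_{\cO_L|\cO_K}\to \Omega_{\cO_M|\cO_K}$ injective for towers $K\subset L\subset M$ of finite Galois extensions — one sees that condition (1) is equivalent to $\Omega_{\cO_L|\cO_K} = 0$ for every finite Galois $L/K$. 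Using (\ref{eq30}) we may further pass to the henselization and assume $(K,v)$ is henselian. Proposition \ref{Prop3*} then produces a tower $K \subset (L|K)^{\rm in} \subset M_1 \subset \cdots \subset M_r$ in which the inertial step contributes nothing to the K\"ahler differentials by (\ref{eq42}), and each subsequent step is a Kummer extension of prime degree or an Artin--Schreier extension. Iterated application of Theorem \ref{Theorem4*} reduces the whole question to characterizing vanishing of $\Omega_{\cO_{M_{i+1}}|\cO_{M_i}}$ for each of these prime-degree building blocks.

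For the implication (2) $\Rightarrow$ (1), I would verify case by case via Theorem \ref{Theorem1*} that condition (2) forces vanishing of $\Omega$ in each prime-degree step. The non-defect cases — tame Kummer, ramified Kummer of degree $p$, and Artin--Schreier without defect — are controlled by the first half of (2): the explicit formulas for $\Omega$ show that nonvanishing in these cases would force some quotient $\Gamma_2/\Gamma_1$ of convex subgroups of $vK$ to be isomorphic to $\Z$. The defect cases, which can only occur when $\chara Kv = p$, are controlled by the second half of (2) together with Theorem \ref{theorem2*}: the theorem identifies $\Omega \cong I/I^p$, and hence equates vanishing with independence of the defect, while surjectivity of Frobenius on $\cO_{\hat K}/p\cO_{\hat K}$ translates into independence of every defect Artin--Schreier extension of $K$. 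The reverse implication (1) $\Rightarrow$ (2) I would argue by contrapositive. If some $\Gamma_2/\Gamma_1 \cong \Z$, pick a generator of the quotient and use it to construct a Kummer or Artin--Schreier extension in which Theorem \ref{Theorem1*} visibly produces a nonzero differential. If $\chara Kv = p$ and Frobenius fails to be surjective on $\cO_{\hat K}/p\cO_{\hat K}$, an approximation argument over $K$ yields a dependent defect Artin--Schreier extension, whose $\Omega$ is then nonzero by Theorem \ref{theorem2*}.

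The main obstacle will be the defect case, specifically the dictionary between the completion-level condition ``Frobenius surjective on $\cO_{\hat K}/p\cO_{\hat K}$'' and the intrinsic condition ``every defect Artin--Schreier extension of $K$ is independent.'' Both directions of this equivalence require careful control of the approximating elements $\vartheta_c = t_{v(\vartheta-c)}(\vartheta-c)$ and of the ramification ideal $I = (a \in L \mid v(a) \in -v(\vartheta - K))$; the passage to $\hat K$ is essential, because one must lift non-$p$-th-power obstructions in $\cO_{\hat K}/p\cO_{\hat K}$ to actual Artin--Schreier generators over $K$, and conversely extract such obstructions from a given dependent defect extension. By contrast, the non-defect cases of Theorem \ref{Theorem1*} feed almost directly into the condition on $\Z$-quotients, so those present no serious difficulty once the tower reduction and the explicit formulas are in place.
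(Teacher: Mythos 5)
Your proposal is correct and follows essentially the same route as the paper: reduce via the direct limit, faithful flatness and Theorem \ref{Theorem4*} to finite Galois extensions, pass to the henselization by (\ref{eq30}), decompose via Proposition \ref{Prop3*} into an inertial step plus prime-degree Kummer and Artin--Schreier steps, and then invoke the explicit classification of Theorems \ref{Theorem1*} and \ref{theorem2*} case by case. You also correctly identify the genuine technical core deferred to \cite{CK}, namely the dictionary between Frobenius surjectivity on $\cO_{\hat K}/p\cO_{\hat K}$ and independence of defect in Artin--Schreier extensions.
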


Gabber and Ramero define a deeply ramified field to be a valued field which satisfies the equivalent conditions of Theorem \ref{GRThm2}. 

Convex subgroups and the completion $\hat K$ of  valued field are defined in the introduction.

The proof of Theorem \ref{GRThm2*} by Gabber and Ramero in \cite{GR} is a major application of their methods of almost mathematics. Their proof uses the derived cotangent complex in a serious way, and  uses  other sophisticated methods. 

We have that under the natural extension of $v$ to $\hat K$, that $v\hat K=vK$ and $\hat Kv=Kv$. Thus the second condition of 2) of Theorem \ref{GRThm2*} implies that if $K$ is a deeply ramified field such that $Kv$ has positive characteristic, then $Kv$ is perfect. 

A perfectoid field  (Definition 3.1 \cite{Sch}) is a complete valued field $(K,v)$ of residue characteristic $p>0$, where $v$ is a rank 1 non discrete valuation $v$, such that the Frobenius homomorphism is surjective on $\mathcal O_K/p\mathcal O_K$.

\begin{corollary} All perfectoid fields are deeply ramified.
\end{corollary}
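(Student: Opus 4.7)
The plan is to verify that a perfectoid field $(K,v)$ satisfies condition 2) of Theorem \ref{GRThm2*}, and then invoke that theorem to conclude $\Omega_{\mathcal{O}_{K^{\mathrm{sep}}}|\mathcal{O}_K} = 0$, i.e., that $K$ is deeply ramified. Since $K$ is complete by definition, we have $\hat{K} = K$ and therefore $\mathcal{O}_{\hat{K}}/p\mathcal{O}_{\hat{K}} = \mathcal{O}_K/p\mathcal{O}_K$, so the two ingredients of condition 2) only need to be checked on $K$ itself.

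First I would handle the convex subgroup condition. Because $v$ has rank $1$, the chain of convex subgroups of $vK$ consists only of $\{0\}$ and $vK$, so the sole candidate pair is $\Gamma_1 = \{0\} \subsetneq \Gamma_2 = vK$, and we must check that $vK \not\cong \mathbb{Z}$. Any rank $1$ value group admits an order-preserving embedding into $\mathbb{R}$, and a subgroup of $\mathbb{R}$ is isomorphic to $\mathbb{Z}$ (as an ordered group) precisely when it is a discrete subgroup of $\mathbb{R}$. Since by hypothesis $v$ is non-discrete, $vK$ is not isomorphic to $\mathbb{Z}$, so the convex subgroup condition holds.

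Next I would handle the Frobenius condition. By definition of a perfectoid field, the residue characteristic is $p > 0$, which means $\operatorname{char} Kv = p > 0$, so this second half of condition 2) applies. The defining property of a perfectoid field is precisely that the Frobenius homomorphism $x \mapsto x^p$ on $\mathcal{O}_K/p\mathcal{O}_K$ is surjective; combined with $\mathcal{O}_{\hat{K}} = \mathcal{O}_K$, this gives exactly the surjectivity asserted in (\ref{homOpO*}). Both halves of condition 2) of Theorem \ref{GRThm2*} are now verified, so Theorem \ref{GRThm2*} applies and $K$ is deeply ramified.

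There is no real obstacle here: the argument is a pure unpacking of definitions against the conditions of Theorem \ref{GRThm2*}. The only point worth emphasizing in the write-up is the implication ``non-discrete rank $1$ value group $\Rightarrow$ not isomorphic to $\mathbb{Z}$,'' which is immediate from the classification of subgroups of $\mathbb{R}$ but deserves one line of justification so that the reader sees why non-discreteness (rather than mere rank $1$) is essential.
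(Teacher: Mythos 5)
Your proposal is correct and is exactly the argument the paper intends: the corollary is stated as an immediate consequence of condition 2) of Theorem \ref{GRThm2*}, with completeness giving $\hat K = K$, non-discreteness of the rank $1$ value group ruling out $vK \cong \Z$, and the perfectoid Frobenius condition supplying the surjectivity of (\ref{homOpO*}). The paper leaves this verification implicit, so your write-up simply makes explicit what the paper takes for granted.
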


We give a new proof of Theorem \ref{GRThm2*} in  \cite[Theorem 1.2]{CK}. Our proof is a direct proof for valuations of arbitrary rank. The proof in \cite{GR} is by induction on the rank of the valuation to reduce to the case of a rank 1 valuation where the techniques of almost mathematics are applicable. 
Here is a brief outline of the proof. By  \cite[Theorem 16.8]{E}, we have that
\begin{equation}\label{eq41}
\Omega_{\mathcal O_{K^{\rm sep}}|\mathcal O_K}=\lim_{\rightarrow}\left((\Omega_{\mathcal O_L|\mathcal O_K})\otimes_{\mathcal O_L}\mathcal O_K^{\rm sep}\right)
\end{equation}
where the limit is over the finite Galois subextensions $L/K$ of $K^{\rm sep}/K$. Equation (\ref{eq30}), Proposition \ref{Prop3*} and Theorem \ref{Theorem4*} reduce  computation of the vanishing of $\Omega_{\mathcal O_L|\mathcal O_K}$ in finite Galois extensions to our analysis of  Kummer and Artin-Schreier extensions in Theorem \ref{Theorem1*}, from which Theorem \ref{GRThm2*} is deduced.

Using the second equivalent condition of Theorem \ref{GRThm2*}, Kuhlmann and Rzepka showed the following striking theorem. 
\begin{theorem}\label{Thm6*}(\cite[Theorem 1.2 and Proposition 4.12]{KuRz}) Only independent defect can occur above a deeply ramified field. 
\end{theorem}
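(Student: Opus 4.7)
The plan is to combine the two K\"ahler-differential characterizations already surveyed: Theorem \ref{GRThm2*} identifies deep ramification of $(K,v)$ with the vanishing $\Omega_{\cO_{K^{\rm sep}}|\cO_K} = 0$, and Theorem \ref{Ocharind} identifies independent defect in a prime degree Galois defect extension $L/K$ with the vanishing $\Omega_{\cO_L|\cO_K} = 0$. The strategy is to propagate the vanishing of the K\"ahler module from the top, $K^{\rm sep}/K$, down to $L/K$ through the fundamental exact sequence of Theorem \ref{Theorem4*}, and then invoke Theorem \ref{Ocharind}.

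After reducing to a Galois Artin-Schreier or Kummer defect extension $L/K$ of prime degree (using the structure theory of \cite{Ku30, KuRz} together with Proposition \ref{Prop3*} to refine any defect extension into such prime degree steps, and passing to the henselization when necessary via Equations (\ref{eq40}) and (\ref{eq30})), I enlarge to a Galois closure inside $K^{\rm sep}$ and take the direct limit of the sequence in Theorem \ref{Theorem4*} over finite Galois extensions $M/K$ with $L \subset M \subset K^{\rm sep}$. This yields
\begin{equation*}
0 \to \Omega_{\cO_L|\cO_K} \otimes_{\cO_L} \cO_{K^{\rm sep}} \to \Omega_{\cO_{K^{\rm sep}}|\cO_K} \to \Omega_{\cO_{K^{\rm sep}}|\cO_L} \to 0.
\end{equation*}
The middle term vanishes by deep ramification of $K$, so the leftmost term does too, and faithful flatness of $\cO_L \to \cO_{K^{\rm sep}}$ (Lemma \ref{Lemma50}) then forces $\Omega_{\cO_L|\cO_K} = 0$. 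Theorem \ref{Ocharind} immediately concludes that the defect of $L/K$ is independent.

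The characteristic-zero hypothesis of Theorem \ref{Ocharind} asking that $\zeta_p \in K$ is arranged by the preliminary tame base change to $K' = K(\zeta_p)$: this extension has degree dividing $p-1$ and hence coprime to the residue characteristic $p$, so it is defect-free, and the same exact sequence applied with $K'$ in place of $L$ shows that $K'$ inherits deep ramification from $K$; the classification of defect transfers between $L/K$ and $L \cdot K'/K'$ by tameness. The main obstacle will be the preliminary reduction to prime degree Galois steps: making precise that the independent-versus-dependent dichotomy of defect in a general extension is correctly captured by the prime degree Artin-Schreier or Kummer steps in a suitable refinement, so that the K\"ahler differential criterion of Theorem \ref{Ocharind} genuinely detects the dichotomy throughout the tower. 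Once that reduction is made rigorous, the remainder of the proof is a short chase of the fundamental exact sequence together with faithful flatness.
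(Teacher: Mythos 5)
Your argument is correct and is essentially the proof the paper indicates: deep ramification gives $\Omega_{\cO_{K^{\rm sep}}|\cO_K}=0$ via Theorem \ref{GRThm2*}, the injectivity coming from Theorem \ref{Theorem4*} (passed to the limit as in Equation (\ref{eq26})) together with faithful flatness (Lemma \ref{Lemma50}) forces $\Omega_{\cO_L|\cO_K}=0$, and Theorem \ref{Ocharind} then yields independence, with Equation (\ref{eq30}) and a tame base change to adjoin $\zeta_p$ handling the remaining hypotheses exactly as you describe. The ``main obstacle'' you flag is not actually one: the independent/dependent dichotomy is defined only for Galois defect extensions of prime degree (which, after adjoining $\zeta_p$, are automatically Artin--Schreier or Kummer), so no refinement of a general extension into prime-degree steps is needed for this statement.
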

A different proof of this theorem follows from Theorem \ref{theorem2*} and Equation (\ref{eq30}), Proposition \ref{Prop3*} and Theorem \ref{Theorem4*} of this paper.

 We have the following characterization of deeply ramified fields when $Kv$ has positive characteristic $p>0$.
 
 \begin{theorem}\label{thm7*} (\cite[Theorem 1.3]{CK}) Let $(K,v)$ be a valued field of residue characteristic $p>0$. If $K$ has characteristic 0, assume in addition that it contains all $p$-th roots of unity.  Then $(K,v)$ is a deeply ramified field if and only if $\Omega_{\mathcal O_L|\mathcal O_K}=0$ for all Galois extensions $L/K$ of degree $p$.
 \end{theorem}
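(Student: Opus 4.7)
I will prove each implication separately, using Theorem~\ref{GRThm2*} to translate deeply ramified into $\Omega_{\mathcal O_{K^{\rm sep}}|\mathcal O_K}=0$, and using the explicit computations of Theorems~\ref{Theorem1*} and \ref{theorem2*} to handle the degree-$p$ extensions.

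For the forward direction, I would start from $\Omega_{\mathcal O_{K^{\rm sep}}|\mathcal O_K}=0$ (granted by Theorem~\ref{GRThm2*}) and descend to a given Galois extension $L/K$ of degree $p$. Applying the short exact sequence of Theorem~\ref{Theorem4*} to any tower $L\subset M$ of finite Galois extensions of $K$, the map $\Omega_{\mathcal O_L|\mathcal O_K}\otimes_{\mathcal O_L}\mathcal O_M\hookrightarrow\Omega_{\mathcal O_M|\mathcal O_K}$ is injective, and injectivity is preserved by flat base change to $\mathcal O_{K^{\rm sep}}$. Passing to the direct limit via (\ref{eq41}) will yield an injection $\Omega_{\mathcal O_L|\mathcal O_K}\otimes_{\mathcal O_L}\mathcal O_{K^{\rm sep}}\hookrightarrow\Omega_{\mathcal O_{K^{\rm sep}}|\mathcal O_K}=0$; faithful flatness of $\mathcal O_L\to\mathcal O_{K^{\rm sep}}$ (Lemma~\ref{Lemma50}) then forces $\Omega_{\mathcal O_L|\mathcal O_K}=0$.

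For the converse I plan to argue by contrapositive: assuming $(K,v)$ is not deeply ramified, Theorem~\ref{GRThm2*} says one of the two parts of condition 2) must fail, and I will produce in each case an explicit Galois extension of degree $p$ with $\Omega\ne 0$. First, if convex subgroups $\Gamma_1\subsetneq\Gamma_2$ of $vK$ satisfy $\Gamma_2/\Gamma_1\cong\Z$, then using the discrete generator of $\Gamma_2/\Gamma_1$ I will construct an Artin-Schreier extension (in characteristic $p$) or a Kummer extension (in characteristic $0$, using $\mu_p\subset K$) of degree $p$ whose generator $\vartheta$ has $v(\vartheta-K)$ accumulating against the $\Z$-jump from below without reaching it. The set $vK\setminus\pm v(\vartheta-K)$ will then fail to be a convex subgroup, so by the classification of \cite{CKR} the defect will be \emph{dependent}, and Theorem~\ref{theorem2*} (together with its Kummer counterpart extracted from Theorem~\ref{Theorem1*}) will give $\Omega_{\mathcal O_L|\mathcal O_K}\cong I/I^p\ne 0$.

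If instead $\chara Kv=p>0$ and Frobenius on $\mathcal O_{\hat K}/p\mathcal O_{\hat K}$ is not surjective, I will select $x\in\mathcal O_{\hat K}$ whose class mod $p$ lies outside the image of Frobenius, approximate it by $\tilde x\in\mathcal O_K$ closely enough to preserve the obstruction, and use $\tilde x$ to build a degree-$p$ Artin-Schreier (char $p$) or Kummer (char $0$) extension in which a limit of $v(\vartheta-K)$ is unattained in $K$. Again $vK\setminus\pm v(\vartheta-K)$ will be non-convex and the defect dependent, so Theorem~\ref{theorem2*} will force $\Omega_{\mathcal O_L|\mathcal O_K}\ne 0$. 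The main obstacle is this contrapositive construction: translating a $\Z$-quotient in the value group, or a non-surjective Frobenius mod $p$, into a concrete degree-$p$ defect polynomial whose ramification ideal satisfies $I^p\subsetneq I$. This is the reverse of the passage in Theorem~\ref{Thm6*} by Kuhlmann-Rzepka, which shows dependent defect cannot occur above a deeply ramified field, and it draws squarely on the detailed Kummer/Artin-Schreier defect analysis developed in \cite{CKR}.
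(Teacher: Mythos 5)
Your forward direction is sound and matches the machinery the paper itself uses: Theorem~\ref{Theorem4*} gives the injection $\Omega_{\mathcal O_L|\mathcal O_K}\otimes_{\mathcal O_L}\mathcal O_{K^{\rm sep}}\hookrightarrow\Omega_{\mathcal O_{K^{\rm sep}}|\mathcal O_K}$, and faithful flatness (Lemma~\ref{Lemma50}) finishes it. The problem is in your converse.

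The gap is that you route \emph{both} failure modes of condition 2) of Theorem~\ref{GRThm2*} through a \emph{dependent defect} extension and Theorem~\ref{theorem2*}. That cannot work for the $\Z$-jump case. Take $K=\Q_p(\mu_p)$: it is complete and discretely valued, hence defectless (Ostrowski), so there is no defect extension of $K$ whatsoever — yet $K$ is not deeply ramified. The witness here must be a value-ramified, non-defect extension, e.g.\ the Kummer extension $L=K(\pi^{1/p})$ with $\pi$ a uniformizer, where $e(L|K)=p$ and $\Omega_{\mathcal O_L|\mathcal O_K}\cong\mathcal O_L/(p\vartheta^{p-1})\neq 0$; this is exactly the non-defect case of Theorem~\ref{Theorem1*} that the paper invokes (via \cite[Theorem 4.7]{CK}) in the last part of the proof of Theorem~\ref{Theorem4'}. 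More generally, a $\Z$-quotient $\Gamma_2/\Gamma_1$ is detected by an extension with ramification index $p$ in that quotient, not by a defect polynomial whose $v(\vartheta-K)$ ``accumulates against the jump'' — such accumulation need not be realizable. A similar objection applies to your treatment of the Frobenius condition: when its failure comes from an imperfect residue field $Kv$, the witnessing degree-$p$ Galois extension has a purely inseparable residue extension (again $d=1$, no defect), and is handled by yet another non-defect case of the classification. So the correct contrapositive argument must split into the several cases of Theorem~\ref{Theorem1*} (value-ramified, residually inseparable, and defect), with Theorem~\ref{theorem2*} covering only the last; as written, your construction presupposes defect extensions that in general do not exist.
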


\section{Artin-Schreier and Kummer extensions of rank 1 non discrete valuations}\label{SectionN}

We will restrict now to the case when $v$ has rank 1 and is nondiscrete. The statements and proofs of the theorems cited from \cite{CKR} and \cite{CK} in the proof of Theorem \ref{Theorem1*} become simpler with this restriction.  This is the case of relevance in almost mathematics, and the statements of these theorems, which we give explicitly here with the restriction that $v$ has rank 1 and is not discrete,  will be used  in the proof of Theorem \ref{Theorem4'} later in this paper.  In this case, the theorems have relatively simple statements, as we are only dealing with valuation groups which are subgroups of the reals. In the general case, the structure of the valuation groups depends on all of the (possibly infinitely many) convex subgroups, and the statements of the theorems rely on the change of the convex subgroups under the extension.  We remark that the theorems in this section are exhaustive for extensions of rank 1 nondiscrete valuations in Artin-Schreier and Kummer extensions, by the proof of Theorem \ref{Theorem1*}.

In the following, we  state explicitely the conclusions of \cite[Theorems 4.2, 4.3 and 1.4]{CKR},  \cite[Theorem 4.5 - 4.8]{CK}) and \cite[Proposition 5.6]{CK}, with the restriction that $v$ is rank 1 and is nondiscrete.  

The consequence of \cite[Proposition 5.6]{CK} is the following. 
\begin{proposition}\label{PropN1} Suppose that  $(L/K,v)$ is finite Galois, 
$$
[L:K]=g(L|K)[Lv:Kv]
$$
where $g(L|K)$ is the number of distinct extensions of $v|K$ to $L$
and $Lv$ is a separable extension of $Kv$. Then $\Omega_{\cO_L|\cO_K}=0$.
\end{proposition}

\begin{proof} This follows from \cite[Proposition 5.6]{CK} and the fact    that the Galois group of the extension is the inertia group if and only if 
$[L:K]=g(L|K)[Lv:Kv]$ and $Lv$ is a separable extension of $Kv$ (as follows from \cite[Theorem (19.12)]{En}).
\end{proof}

The conclusion of \cite[Theorems 4.2, 4.3 and 1.4]{CKR}, with the assumption that $v$ is nondiscrete of rank 1 is:

\begin{theorem}\label{TheoremN1} Let $(L|K, v)$ be an Artin-Schreier defect extension or Kummer defect extension of degree $p$ such that $v$ is nondiscrete of rank 1,  with ramification
ideal $I_r$. Then there is an $\cO_L$-module isomorphism
$$
\Omega_{\cO_L|\cO_K}\cong I_r/I_r^p
$$
where $I_r$ is the ramification ideal of $\cO_L$, defined after Theorem \ref{theorem2*}.

Since $Lv$ has rank 1, there is an order preserving embedding of $Lv$ as a nondiscrete subgroup of the reals $\R$.
Then there exists (by the analysis of Sections 2.6 and 2.7 of \cite{CKR}) a nonnegative element $c\in \R$ such that 
\begin{equation}\label{eq1}
I_r=\{f\in \mathcal O_L\mid v(f)>c\}.
\end{equation}
We have that $\Omega_{\cO_L|\cO_K}=0$ if and only if $c=0$.
\end{theorem}

The conclusion of \cite[Theorem 4.5]{CK} in the case that $v$ is nondiscrete of rank 1 is:

\begin{theorem}\label{TheoremN2} Let $(L|K, v)$ be an Artin-Schreier extension of degree $p$ such that $v$ is nondiscrete of rank 1, with $f(L|K):=[Lv:Kv]=p$ and $Lv$ inseparable over $Kv$. Then 
$$
\Omega_{\cO_L|\cO_K}\cong \cO_L/(\vartheta^{1-p})
$$
where $\vartheta$ is a suitable  Artin-Schreier generator with $v(\vartheta)<0$. In particular, $\Omega_{\cO_L|\cO_K}\ne 0$.
\end{theorem}

The conclusion of \cite[Theorem 4.6]{CK} in the case that $v$ is nondiscrete of rank 1 is:

\begin{theorem}\label{TheoremN3} Let $(L|K, v)$ be an Artin-Schreier extension of degree $p$ such that $v$ is nondiscrete of rank 1, with 
$e(L|K):=(vL:vK)=p$. Then 
$$
\Omega_{\cO_L|\cO_K}\cong \vartheta^{-1}\mathcal M_L/(\vartheta^{-1}\mathcal M_L)^p
$$
where $\vartheta$ is a suitable  Artin-Schreier generator with $v(\vartheta)<0$ and $\mathcal M_L$ is the maximal ideal of $\cO_L$.
In particular, $\Omega_{\cO_L|\cO_K}\ne 0$.
\end{theorem}

The conclusion of \cite[Theorem 4.7]{CK} in the case that $v$ is nondiscrete is:

\begin{theorem}\label{TheoremN4} Let $(L|K, v)$ be a Kummer extension of prime degree $p$ such that $v$ is nondiscrete of rank 1, with 
$f(L|K):=[Lv:Kv]=p={\rm char }(Kv)$. Then there are two possible cases,
\begin{enumerate}
\item[i)]
$$
\Omega_{\cO_L|\cO_K}\cong \cO_L/(p)
$$
so $\Omega_{\cO_L|\cO_K}\ne 0$ or
\item[ii)]
$$
\Omega_{\cO_L|\cO_K}\cong \cO_L/(p\tilde c^{p-1})
$$
with $p\tilde c^{p-1}\in \cO_L$ and
$$
\Omega_{\cO_L|\cO_K}=0\mbox{ if and only if }(p\tilde c^{p-1})=\mathcal O_L\mbox{ if and only if }Lv\mbox{ is separable over }Kv.
$$
\end{enumerate}
\end{theorem}

The conclusion of \cite[Theorem 4.8]{CK} in the case that $v$ is nondiscrete is:

\begin{theorem}\label{TheoremN5} Let $(L|K, v)$ be a Kummer extension of prime degree $q$ such that $v$ is nondiscrete of rank 1, with 
$e(L|K):=(vL:vK)=q$. Then there are two possible cases
\begin{enumerate}
\item[i)] 
$$
\Omega_{\cO_L|\cO_K}\cong \mathcal M_L/q\mathcal M_L^q
$$
where $\mathcal M_L$ is the maximal ideal of $\cO_L$. We have that 
$$
\Omega_{\cO_L|\cO_K}=0\mbox{ if and only if } \mbox{char }(vK)\ne q.
$$
\item[ii)]
$$
\Omega_{\cO_L|\cO_K}\cong \xi^{-1}\mathcal M_L/q(\xi^{-1}\mathcal M_L)^q
$$
where $\mathcal M_L$ is the maximal ideal of $\cO_L$ and $\xi^{-1}\in \mathcal M_L$. 
In particular, 
$$
\Omega_{\cO_L|\cO_K}\ne 0.
$$
\end{enumerate}
\end{theorem}

\section{Almost Mathematics}\label{Section2}

Let $R$ be a rank 1 nondiscrete valuation ring with quotient field $K$ and maximal ideal $I$. We have that $(R,I)$ is a basic setup in the language of Chapter 2 of \cite{GR}. let $v$ be a valuation of $K$ such that $R$ is the valuation ring of $v$.

 Since $v$ is rank 1 nondiscrete, it has the property that if $\alpha\in I$ and $n\ge 1$, then there exists an element $\beta\in I$ such that $n v(\beta)<v(\alpha)$, so that $\frac{\alpha}{\beta^n}\in I$. This observation will be used repeatedly in this section. In particular, if $\epsilon\in I$, then we can factor $\epsilon=\alpha\beta$ where $\alpha,\beta\in I$.

An $R$-module $M$ is almost zero if $IM=0$.  Two $R$-modules $M$ and $N$ are said to be almost isomorphic if there exists an $R$-module  homomorphism $\phi:M\rightarrow N$ such that the kernel and cokernel of $\phi$ are almost zero.

If $M$ is an  $R$-module, then $M_*$ is defined to be $M_*={\rm Hom}_R(I,M)$. There is a natural homomorphism $i:M\rightarrow M_*$. The quotient $M_*/i(M)$ is almost zero; for $\psi\in M_*$ and $\epsilon\in I$, $\epsilon \psi=i(\psi(\epsilon))\in i(M)$. The kernel of $i:M\rightarrow M_*$ is $\{x\in M|I\subset\mbox{ ann}(x)\}$ which is almost zero, so $i:M\rightarrow M_*$ is an almost isomorphism.

\begin{lemma}\label{Lemma7} Suppose that $M$ is a torsion free $R$-module. Then there is a natural $R$-module isomorphism of $M_*$ with 
$\{m\in M\otimes_RK\mid \epsilon m\in M\mbox{ for all }\epsilon \in I\}$,
where $K$ is the quotient field of $R$.

If $M=A$ is an  $R$-algebra, then this identification gives $A_*$ a natural $R$-algebra structure extending that of $A$.
\end{lemma}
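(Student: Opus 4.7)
The plan is to use torsion-freeness to view $M$ as a submodule of $M\otimes_R K$ and then construct the isomorphism via an ``evaluate and divide'' prescription. For each $\phi\in M_*=\mathrm{Hom}_R(I,M)$, fix any nonzero $\epsilon_0\in I$ and set $\Phi(\phi)=\phi(\epsilon_0)\otimes\epsilon_0^{-1}\in M\otimes_R K$. The identity
$$
\epsilon_1\phi(\epsilon_0)=\phi(\epsilon_0\epsilon_1)=\epsilon_0\phi(\epsilon_1)
$$
in $M$, combined with torsion-freeness (which lets one cancel nonzero scalars from $R$ inside $M\otimes_R K$), shows both that $\Phi(\phi)$ is independent of the choice of $\epsilon_0$ and that $\epsilon\cdot\Phi(\phi)=\phi(\epsilon)\in M$ for every $\epsilon\in I$. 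Hence $\Phi$ takes values in $N:=\{m\in M\otimes_R K\mid \epsilon m\in M\text{ for all }\epsilon\in I\}$ and is $R$-linear.

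The inverse is the obvious one: for $m\in N$, define $\phi_m:\epsilon\mapsto\epsilon m$, which is $R$-linear into $M$ by the defining property of $N$. A short verification yields $\Phi(\phi_m)=(\epsilon_0 m)\otimes\epsilon_0^{-1}=m$ and $\phi_{\Phi(\phi)}(\epsilon)=\epsilon\Phi(\phi)=\phi(\epsilon)$, so the two constructions are mutually inverse $R$-module isomorphisms. This settles the first claim.

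For the algebra part, I identify $A_*$ with $N\subset A\otimes_R K$ through $\Phi$. Since $A$ is $R$-torsion-free, $A\otimes_R K$ (which is the localization of $A$ at $R\setminus\{0\}$) is a commutative $R$-algebra, and it remains only to show that $N$ is a subring. The crucial input is that $v$ is rank 1 and non-discrete, so $vK$ embeds as a dense subgroup of $\R$; hence every $\epsilon\in I\setminus\{0\}$ admits a factorization $\epsilon=\epsilon_1\epsilon_2$ with $\epsilon_1,\epsilon_2\in I$ (pick $0<\gamma<v(\epsilon)$ in $vK$ and choose $\epsilon_i\in R$ with $v(\epsilon_1)=\gamma$, $v(\epsilon_2)=v(\epsilon)-\gamma$). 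For $m,m'\in N$ and $\epsilon\in I$ this gives
$$
\epsilon\cdot mm'=(\epsilon_1 m)(\epsilon_2 m')\in A,
$$
so $mm'\in N$. Finally, under $\Phi$ the natural map $i:A\to A_*$ corresponds to the inclusion $A\hookrightarrow A\otimes_R K$, so the resulting $R$-algebra structure on $A_*$ restricts to the original one on $A$.

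The main obstacle will be the closure of $N$ under multiplication, and this is precisely where the rank~1 non-discrete hypothesis is indispensable: in a discrete rank~1 situation $I$ is principal and $I\cdot I\subsetneq I$, so the factorization above breaks down, and naively multiplying elements of $N$ would only yield $\epsilon^2\cdot mm'\in A$, which is insufficient to place $mm'$ in $N$.
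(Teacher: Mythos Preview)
Your proof is correct and follows essentially the same approach as the paper: define $\Psi(\phi)=\phi(\epsilon)/\epsilon\in M\otimes_R K$, check well-definedness via $R$-linearity and torsion-freeness, and invert by $m\mapsto(\epsilon\mapsto\epsilon m)$. Your treatment of the algebra statement is in fact more explicit than the paper's own proof, which does not spell out closure of $N$ under multiplication; the paper instead records the product formula $(f\circ g)(\alpha\beta)=f(\alpha)g(\beta)$ separately after the lemma and remarks that it agrees with the one coming from $A\otimes_R K$, whereas you verify directly (using the factorization $\epsilon=\epsilon_1\epsilon_2$ available because $I=I^2$ in the rank~1 non-discrete setting) that $N$ is a subring.
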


\begin{proof} Since $M$ is torsion free, there is a natural inclusion of $M$ into $M\otimes_RK$. Let 
$$
S=\{m\in M\otimes_RK\mid \epsilon m\in M\mbox{ for all }\epsilon \in I\}.
$$
Suppose that $\phi\in \mbox{Hom}_R(I,M)$. Then for nonzero $\delta,\epsilon \in I$, 
$\frac{\phi(\delta)}{\delta}=\frac{\phi(\epsilon)}{\epsilon}$. This determines an $R$-module homomorphism $\Psi:M_*\rightarrow M\otimes_RK$, given by $\Psi(\phi)=\frac{\phi(\epsilon)}{\epsilon}$ for $0\ne \epsilon\in I$. The map $\Psi$ is an injection since $M$ is torsion free.  For $0\ne \epsilon \in I$, $\epsilon\Psi(\phi)\in M$ so $\mbox{Image}(\Psi)\subset S$. Conversely, if $\lambda\in S$, we define $\phi\in \mbox{Hom}_R(I,M)$ by $\phi(\epsilon)=\epsilon\lambda$ for $\epsilon\in I$. By construction, $\Psi(\phi)=\lambda$. Thus $M_*\cong S$.
\end{proof}

Suppose that  $A$ is an $R$-algebra. There is a natural homomorphism of $A$ into $A_*$ defined by $x\mapsto \phi_x$ for $x\in A$, where $\phi_x(\epsilon)=\epsilon x$ for $\epsilon\in I$. The $R$-module  $A_*$ has an $R$-algebra structure, extending that of $A$, which is defined (Remark 2, page 187 \cite{Fa}) by
\begin{equation}\label{eq20}
(f\circ g)(\alpha\beta)=f(\alpha)g(\beta)\mbox{ for }f,g \in A_*\mbox{ and }\alpha,\beta\in I.
\end{equation}
This multiplication is the same as that  defined in Lemma \ref{Lemma7} if $A$ is a torsion free $R$-module.
If $A$ is an $R$-algebra and $M$ is an $A$-module, then $\mbox{Hom}_R(I,M)$ is an $A_*$-module. If $B$ is an $R$-algebra, then $\mbox{Hom}_R(I,B)$ is an $A_*$-algebra.

We give a simple example showing that we cannot assume that all naturally ocurring $R$-algebras are  $R$-torsion free. The computation of $A\otimes_RA$ is necessary to consider \'etale like properties of    $R\rightarrow A$ (Definition \ref{Def1}). 

\begin{example} Even if an $A$-algebra is an $R$-torsion free domain, it may be that $A\otimes_RA$ has $R$-torsion. 
\end{example} 

\begin{proof} Let $k$ be a field and $R$ be the polynomial ring $R=k[x,y]$. Let $A=R[w]/(xw-y)$. The ideal $(xw-y)$ is a prime ideal in $R[w]$, so $A\cong R[\frac{y}{x}]$. In particular, $A\cong k[x,\frac{y}{x}]$ is $R$-torsion free. However, 
$A\otimes_RA\cong A[z]/(xz-y)$ has $R$ torsion since $x(z-\frac{y}{x})=0$. 
\end{proof}

We now state some definitions which can be found in  Chapter 2 of  \cite{GR}.

\begin{definition}\label{Def1} Suppose that $A$ is an $R$-algebra and $M$ is an $A$-module. 
\begin{enumerate}
\item[1)] $M$ is almost flat if $\mbox{Tor}^A_{1}(M,N)$ is almost zero for all $A$-modules $N$. 
\item[2)] $M$ is almost projective if $\mbox{Ext}_A^1(M,N)$ is almost zero for all $A$-modules $N$.
\item[3)] An $A$-module $M$ is almost finitely generated (presented) if for each $\epsilon\in I$, there exists a finitely generated (presented) $A$-module $M_{\epsilon}$ and an $A$-module homomorphism $M_{\epsilon}\rightarrow M$ with kernel and cokernel annhilated by $\epsilon$.
\item[4)] An $A$-module $M$ is uniformly almost finitely generated if there exists a number $n$ such that for all $\epsilon\in I$, there exists a finitely generated $A$-module $M_{\epsilon}$ and an $A$-module homomorphism $M_{\epsilon}\rightarrow M$ with kernel and cokernel  annihilated by $\epsilon$ and  the number of generators of $M_{\epsilon}$ as an $A$-module is bounded above by $n$.
\end{enumerate}
\end{definition}

\begin{lemma}\label{Lemma6} Suppose that $A$ is an $R$-algebra and  $M$ is an $A$-module. Then the following are equivalent:
\begin{enumerate}
\item[1)] $M$ is an almost flat $A$-module.
\item[2)] For every homomorphism $U\rightarrow V$ of $A$-modules whose kernel is almost zero, the kernel of $U\otimes_AM\rightarrow V\otimes_AM$ is almost zero.
\item[3)] For every injective homomorphism $U\rightarrow V$ of $A$-modules, the kernel of $U\otimes_AM\rightarrow V\otimes_AM$ is almost zero.
\end{enumerate}
\end{lemma}

\begin{proof} First  suppose that $M$ is almost flat, so that $\mbox{Tor}_1^A(M,N)$ is almost zero for all $A$-modules $N$. Let $\gamma:U\rightarrow V$ be a homomorphism of $A$-modules whose kernel $K$ is almost zero. We have short exact sequences of $A$-modules
$$
0\rightarrow K\rightarrow U\rightarrow U/K\rightarrow 0\mbox{ and }
0\rightarrow U/K\rightarrow V\rightarrow V/\gamma(U)\rightarrow 0.
$$
Let $L$ be the image of ${\rm Tor}_1^A(V/\gamma(U),M)$ in $(U/K)\otimes_AM$ in the exact sequence
$$
{\rm Tor}_1^A(V/\gamma(U),M)\rightarrow (U/K)\otimes_AM\rightarrow V\otimes_AM.
$$
${\rm Tor}_1^A(V/\gamma(U),\,M)\cong {\rm Tor}_1^A(M,V/\gamma(U))$ is almost zero by our assumption on $M$, so $L$ is almost zero.
We have the following diagram of $A$-modules, where the column and row are exact:
$$
\begin{array}{ccccccc}
&&&&0&&\\
&&&&\downarrow&&\\
&&&&L&&\\
&&&&\downarrow&&\\
K\otimes_AM&\stackrel{\tau}{\rightarrow}&U\otimes_AM&\stackrel{\sigma}{\rightarrow}&(U/K)\otimes_AM&\rightarrow 0\\
&&&&\,\,\,\,\,\downarrow\phi&&\\
&&&&V\otimes_AM\\
\end{array}
$$
Suppose that $x\in\mbox{Kernel}(\gamma\otimes 1_M=\phi\sigma:U\otimes_AM\rightarrow V\otimes_AM)$ and $\epsilon\in I$. Write $\epsilon=\alpha\beta$ for some $\alpha,\beta\in I$. Then $\sigma(x)\in L$ implies $\alpha\sigma(x)=0$ and so $\sigma(\alpha x)=0$ which implies $\alpha x=\tau(z)$ for some $z\in K\otimes_AM$. We have that  $\beta z=0$ so $\epsilon x=\beta\alpha x=0$. 
So the kernel of $\gamma\otimes 1=\phi\sigma:U\otimes_AM\rightarrow V\otimes_AM$ is almost zero, and we have verified that condition 2) holds.

If 2) holds then certainly 3) holds. Suppose that condition 3) holds. Let $N$ be an $A$-module. We have a short exact sequence
$$
0\rightarrow K\stackrel{\alpha}{\rightarrow} P\rightarrow N\rightarrow 0
$$
where $P\rightarrow N$ is a surjection from a projective $A$-module $P$, giving an exact sequence
$$
{\rm Tor}_1^A(P,M)\rightarrow {\rm Tor}_1^A(N,M)\rightarrow K\otimes M
\stackrel{\alpha\otimes 1}\rightarrow  P\otimes M.
$$
The kernel of $\alpha\otimes 1$ is almost zero by the assumption on $M$  and $\mbox{Tor}_1^A(P,M)=0$ since $P$ is projective. Thus $\mbox{Tor}_1^A(M,N)\cong \mbox{Tor}_1^A(N,M)$ is almost zero.
\end{proof}

\begin{lemma}\label{Lemma13} Suppose that $A\rightarrow B$ and $B\rightarrow C$ are almost flat homomorphisms of $R$-algebras. Then $A\rightarrow C$ is almost flat.
\end{lemma}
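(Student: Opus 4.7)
The plan is to verify almost flatness of $A\to C$ directly from the equivalent criterion of Lemma \ref{Lemma6}: I will show that for an arbitrary injection $U\hookrightarrow V$ of $A$-modules, the kernel $L$ of $U\otimes_A C\to V\otimes_A C$ is almost zero, i.e.\ annihilated by $I$. The natural strategy is to break $\otimes_A C$ as $\otimes_A B \otimes_B C$ and apply Lemma \ref{Lemma6} successively to the two given almost flat homomorphisms.

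First, I would tensor $U\hookrightarrow V$ with $B$ over $A$. The almost flatness of $A\to B$, via Lemma \ref{Lemma6}, forces the kernel $K$ of $U\otimes_A B\to V\otimes_A B$ to satisfy $IK=0$. Writing $W$ for the image of this map, I obtain a short exact sequence $0\to K\to U\otimes_A B\to W\to 0$ together with an injection $W\hookrightarrow V\otimes_A B$ of $B$-modules. Next I tensor the short exact sequence with $C$ over $B$ to obtain a surjection $U\otimes_A C\to W\otimes_B C$ whose kernel $N$ is a quotient of $K\otimes_B C$; hence $IN=0$. Simultaneously, Lemma \ref{Lemma6} applied to the injection $W\hookrightarrow V\otimes_A B$ and the almost flat map $B\to C$ yields that the kernel $M$ of $W\otimes_B C\to V\otimes_A C$ satisfies $IM=0$.

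A short diagram chase then inserts $L$ into an exact sequence $0\to N\to L\to M$. Consequently the image of $IL$ in $M$ lies in $IM=0$, so $IL\subseteq N$, and therefore $I^2 L\subseteq IN=0$. The final step is to upgrade $I^2L=0$ to $IL=0$. This is possible because $R$ is a rank $1$ non-discrete valuation ring: its value group embeds as a dense subgroup of $\R$, so every $\epsilon\in I$ factors as a product of two elements of $I$ of positive valuation. Equivalently, $I^2=I$, and therefore $I^2L=0$ implies $IL=0$, as required.

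I do not anticipate a serious obstacle; the argument is mostly bookkeeping with Lemma \ref{Lemma6}. The only delicate point is the gap between $I^2L=0$, which the two-step procedure yields automatically, and the required $IL=0$, which is the actual definition of almost zero. This gap closes precisely because the rank $1$ non-discrete hypothesis built into the basic setup $(R,I)$ fixed at the start of Section \ref{Section2} makes $I$ an idempotent ideal; in a setup where $I\neq I^2$ the present composition argument would only give a slightly weaker conclusion.
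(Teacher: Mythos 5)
Your proof is correct and follows essentially the same route as the paper's: factor $\otimes_A C$ through $\otimes_A B\otimes_B C$, apply Lemma \ref{Lemma6} at each stage, and absorb the resulting two almost-zero kernels. The paper phrases the last step as an element chase, writing $\epsilon=\alpha\beta$ with $\alpha,\beta\in I$, which is exactly your observation that $I=I^2$ in this basic setup.
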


\begin{proof}  Suppose that $M\rightarrow N$ is a homomorphism of $A$-modules whose kernel is almost zero. Then by Lemma \ref{Lemma6}, the kernel of $M\otimes_AB\rightarrow N\otimes_AB$ is almost zero and the kernel of $(M\otimes_AB)\otimes_BC\rightarrow (N\otimes_AB)\otimes_BC$ is almost zero. But this kernel is isomorphic to the kernel of $M\otimes_AC\rightarrow N\otimes_AC$.
\end{proof}

\begin{lemma}\label{Lemma14} Suppose that $\phi:A\rightarrow B$ is an almost flat homomorphism of $R$-algebras and $\psi:A\rightarrow C$ is a homomorphism of $R$-algebras. Then 
$$
1_C\otimes \phi:C\cong C\otimes_A A\rightarrow C\otimes_AB
$$
 is an almost flat homomorphism of $R$-algebras.
\end{lemma}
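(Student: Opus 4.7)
The plan is to apply the criterion of Lemma \ref{Lemma6}. Interpreting the statement so that the map in question is the $C$-algebra structure map $C \to C \otimes_A B$ obtained from $1_C \otimes \phi$ (with $C$ identified with $C \otimes_A A$), the task is to show that $C \otimes_A B$ is almost flat as a $C$-module. By Lemma \ref{Lemma6}, this reduces to showing that for every injection $U \hookrightarrow V$ of $C$-modules, the kernel of
\[
U \otimes_C (C \otimes_A B) \longrightarrow V \otimes_C (C \otimes_A B)
\]
is almost zero.

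The first step is to invoke the canonical base-change isomorphism of $C$-modules $U \otimes_C (C \otimes_A B) \cong U \otimes_A B$, and likewise for $V$; under this identification the displayed map becomes the natural map $U \otimes_A B \to V \otimes_A B$, where $U$ and $V$ are viewed as $A$-modules via $\psi : A \to C$. Since an injection of $C$-modules is injective on underlying abelian groups, it remains injective as a map of $A$-modules. The second step is to apply Lemma \ref{Lemma6} in the reverse direction to the almost flat homomorphism $\phi : A \to B$, which gives that the kernel of $U \otimes_A B \to V \otimes_A B$ is almost zero.

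The final step is to note that being almost zero depends only on annihilation by $I \subset R$, a condition preserved under passage between the $A$-, $C$-, and $(C \otimes_A B)$-module structures, so the kernel above is almost zero as a $(C \otimes_A B)$-module, as required. There is no genuine obstacle here; the argument is a formal combination of the base-change identity for tensor products with the criterion of Lemma \ref{Lemma6}, and the only care needed is in tracking the various module structures and verifying that the almost zero condition is insensitive to them.
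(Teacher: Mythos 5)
Your proposal is correct and is essentially the paper's own argument: both reduce via Lemma \ref{Lemma6} to an injection of $C$-modules, apply the base-change isomorphism $U\otimes_C(C\otimes_AB)\cong U\otimes_AB$, and invoke the almost flatness of $\phi:A\rightarrow B$ to see the kernel is almost zero. You also correctly read the map in the statement as the structure map $C\cong C\otimes_AA\rightarrow C\otimes_AB$, which is exactly how the paper's proof treats it.
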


\begin{proof} We use the criterion of Lemma \ref{Lemma6}. Suppose that $M\rightarrow N$ is an injection of $C$-modules. Then the kernel $K$ of $M\otimes_AB\rightarrow N\otimes_AB$ is almost zero. But
$M\otimes_AB\cong M\otimes_C(C\otimes_AB)$ and $N\otimes_AB\cong N\otimes_C(C\otimes_AB)$ so $K$ is the kernel of $M\otimes_C(C\otimes_AB)\rightarrow N\otimes_C(C\otimes_AB)$. Thus
$1_C\otimes \phi:C\rightarrow C\otimes_AB$ is almost flat. 
\end{proof}


\begin{lemma}\label{Lemma8} Suppose that $f:A\rightarrow B$ and $g:B\rightarrow C$ are homomorphisms of $R$-algebras and that the multiplications $\mu_{B|A}:B\otimes_AB\rightarrow B$ and $\mu_{C|B}:C\otimes_BC\rightarrow C$ are almost flat. Then the multiplication $\mu_{C|A}:C\otimes_AC\rightarrow C$ is almost flat.
\end{lemma}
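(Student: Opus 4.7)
The plan is to factor the multiplication $\mu_{C|A}$ through $C\otimes_B C$ and then invoke Lemmas \ref{Lemma13} and \ref{Lemma14}. Explicitly, I would write
\[
\mu_{C|A} \;:\; C\otimes_A C \xrightarrow{\ \alpha\ } C\otimes_B C \xrightarrow{\ \mu_{C|B}\ } C,
\]
where $\alpha(c_1\otimes_A c_2)=c_1\otimes_B c_2$. The second arrow is almost flat by hypothesis, so by Lemma \ref{Lemma13} it suffices to prove that $\alpha$ is almost flat.

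The central algebraic observation, and the main step of the argument, is the canonical isomorphism of $R$-algebras
\[
C\otimes_B C \;\cong\; (C\otimes_A C)\otimes_{B\otimes_A B} B,
\]
in which $B\otimes_A B$ acts on $C\otimes_A C$ via the $R$-algebra homomorphism $g\otimes g$ and on $B$ via $\mu_{B|A}$. Under this identification $\alpha$ becomes the map $x\mapsto x\otimes 1$, which is precisely the base change of $\mu_{B|A}:B\otimes_A B\to B$ along the $R$-algebra map $g\otimes g:B\otimes_A B\to C\otimes_A C$. Since $\mu_{B|A}$ is almost flat by hypothesis, Lemma \ref{Lemma14}, applied with its $A$, $B$, $C$ taken to be $B\otimes_A B$, $B$, and $C\otimes_A C$ respectively, yields that $\alpha$ is almost flat. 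Lemma \ref{Lemma13} then gives that $\mu_{C|A}$ is almost flat.

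The only real obstacle is verifying the displayed tensor identity, but this is routine multilinear algebra: specializing the $B\otimes_A B$-balanced relation $(g(b_1)c_1\otimes g(b_2)c_2)\otimes b = (c_1\otimes c_2)\otimes b_1 b_2 b$ with $b_2=1$, $b=1$ and with $b_1=1$, $b=1$ shows that the forced identifications in $(C\otimes_A C)\otimes_{B\otimes_A B} B$ are exactly the defining relations $c_1 g(b)\otimes c_2 = c_1\otimes g(b)c_2$ of $C\otimes_B C$. Once this isomorphism is in hand, the rest is a direct assembly of Lemmas \ref{Lemma13} and \ref{Lemma14}.
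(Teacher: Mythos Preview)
Your proposal is correct and follows essentially the same route as the paper: factor $\mu_{C|A}$ through $C\otimes_B C$, identify $C\otimes_B C\cong (C\otimes_A C)\otimes_{B\otimes_A B}B$ so that the first factor is a base change of $\mu_{B|A}$, and then apply Lemmas~\ref{Lemma14} and~\ref{Lemma13}. The only cosmetic difference is that the paper cites \cite[Proposition~I.5.3.5]{EGA1} for the tensor identity (phrased as a Cartesian square), whereas you verify it by hand.
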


\begin{proof} The homomorphism $\mu_{C|A}$ factors as
$$
C\otimes_AC\stackrel{\phi}{\rightarrow}C\otimes_BC\stackrel{\mu_{C|B}}{\rightarrow}C
$$
where $\phi$ is the natural map. We will show that $\phi$ is almost flat. It will then follow that $\mu_{C|A}$ is almost flat by Lemma \ref{Lemma13} since it is a composition of almost flat homomorphisms.  

The natural commutative diagram
$$
\begin{array}{ccc}
C\otimes_BC&\stackrel{\phi}{\leftarrow}&C\otimes_AC\\
\uparrow&&\uparrow\\
B&\stackrel{\mu_{B|A}}{\leftarrow}&B\otimes_AB
\end{array}
$$
is Cartesian by  \cite[Proposition I.5.3.5]{EGA1}; that is, 
$$
C\otimes_BC\cong B\otimes_{B\otimes_AB}(C\otimes_AC).
$$
Thus $\phi$ is almost flat by Lemma \ref{Lemma14} since   $\mu_{B|A}$ is almost flat. 
\end{proof}

\begin{lemma}\label{Lemma12}(\cite[Lemma 2.4.15]{GR}) Let $A$ be an $R$-algebra and $M$ be an almost finitely generated $A$-module. Then $M$ is an almost projective $A$-module if and only if for every $\epsilon \in I$, there exists a positive integer $n(\epsilon)$ and $A$-module homomorphisms
\begin{equation}\label{eq22}
M\stackrel{u_{\epsilon}}{\rightarrow} A^{n(\epsilon)}\stackrel{v_{\epsilon}}{\rightarrow} M
\end{equation}
such that $v_{\epsilon}\circ u_{\epsilon}=\epsilon 1_M$.
\end{lemma}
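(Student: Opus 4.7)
The plan is to exploit the fact that in a rank one non-discrete valuation ring $R$ the maximal ideal satisfies $I^2=I$, so every $\epsilon\in I$ factors as $\epsilon=\alpha\beta$ with $\alpha,\beta\in I$; the two directions of the equivalence then separate into a homological manipulation and a lifting argument.

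\textbf{Easy direction.} Assume the factorization \eqref{eq22} holds for every $\epsilon\in I$, and fix an $A$-module $N$. Applying $\mbox{Ext}^1_A(-,N)$ to the relation $v_\epsilon\circ u_\epsilon=\epsilon\cdot 1_M$, and using that $\mbox{Ext}^1_A(A^{n(\epsilon)},N)=0$ because $A^{n(\epsilon)}$ is free, one sees that multiplication by $\epsilon$ on $\mbox{Ext}^1_A(M,N)$ factors through the zero module, hence is zero. This holds for every $\epsilon\in I$, so $I\cdot\mbox{Ext}^1_A(M,N)=0$ and $\mbox{Ext}^1_A(M,N)$ is almost zero.

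\textbf{Hard direction.} Suppose $M$ is almost projective and almost finitely generated, and fix $\epsilon\in I$ with a factorization $\epsilon=\alpha\beta$. By almost finite generation applied to $\alpha$, there is a finitely generated $A$-module $M_\alpha$ together with a map $M_\alpha\to M$ whose cokernel is killed by $\alpha$; choosing a surjection $A^n\twoheadrightarrow M_\alpha$ and composing produces $\pi\colon A^n\to M$ whose image $N:=\pi(A^n)$ satisfies $\alpha M\subset N$. Writing $K=\ker(\pi)$, we have a short exact sequence
\[
0\to K\to A^n\xrightarrow{\pi} N\to 0.
\]
Multiplication by $\alpha$ defines a homomorphism $\alpha\cdot 1_M\colon M\to N$; pulling the displayed extension back along it produces a class $\xi\in\mbox{Ext}^1_A(M,K)$. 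By almost projectivity this Ext module is annihilated by $I$, so $\beta\cdot\xi=0$, which says that the extension pulled back by $\beta\alpha\cdot 1_M\colon M\to N$ splits. A splitting provides a lift $u_\epsilon\colon M\to A^n$ with $\pi\circ u_\epsilon=\epsilon\cdot 1_M$; setting $v_\epsilon:=\pi$ gives the required factorization with $n(\epsilon)=n$.

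\textbf{Main obstacle.} The technical heart is the lifting step: one must identify the ring action of $\beta$ on $\mbox{Ext}^1_A(M,K)$ with pullback along multiplication by $\beta$ on $M$, so that vanishing of $\beta\cdot\xi$ actually forces splitting of the extension pulled back along $\beta\alpha$. This is a routine Yoneda-type verification once stated, but it is the place where the homological content of almost projectivity is genuinely used, and the splitting $\epsilon=\alpha\beta$ (available thanks to $I^2=I$) is essential: one factor produces the near-surjection from a free module, while the other kills the resulting obstruction class.
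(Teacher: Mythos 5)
Your proposal is correct and follows essentially the same route as the paper: both directions agree, with the paper expressing the obstruction to lifting $\beta\gamma_\alpha$ via the connecting map $\mathrm{Hom}_A(M,M_\alpha)\rightarrow \mathrm{Ext}^1_A(M,K)$ where you phrase the same obstruction as a Yoneda pullback class killed by $\beta$. The two formulations are equivalent, and your use of the factorization $\epsilon=\alpha\beta$ (one factor for the near-surjection from $A^n$, the other to kill the Ext class) matches the paper's argument exactly.
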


\begin{proof} Suppose that the sequences (\ref{eq22}) exist for all $\epsilon\in I$. Let $N$ be an $A$-module. Then 
$\epsilon:\mbox{Ext}_A^1(M,N)\rightarrow \mbox{Ext}_A^1(M,N)$ factors as 
$$
{\rm Ext}_A^1(M,N)\stackrel{v_{\epsilon}^*}{\rightarrow}{\rm Ext}_A^1(A^{n(\epsilon)},N)
\stackrel{u_{\epsilon}^*}{\rightarrow} {\rm Ext}_A^1(M,N).
$$
Since $\mbox{Ext}_A^1(A^{n(\epsilon)},N)=0$, we have that $\epsilon \mbox{Ext}_A^1(M,N)=0$. Since this holds for all $\epsilon\in I$, $\mbox{Ext}_A^1(M,N)$ is almost zero for all $A$-modules $N$, so that $M$ is almost projective. 

Conversely, suppose that $M$ is almost finitely generated and almost projective. Suppose $\epsilon\in I$. Factor $\epsilon=\alpha\beta$ for some $\alpha,\beta\in I$. Since $M$ is almost finitely generated, there exists 
$n=n(\epsilon)\in \Z_{>0}$ and a homomorphism $\phi_{\alpha}:A^n\rightarrow M$ such that 
$\alpha\mbox{coker}(\phi_{\alpha})=0$. Let $M_{\alpha}=\phi_{\alpha}(A^n)$, giving a factorization of $\phi_{\alpha}$ as
$$
A^n\stackrel{\psi_{\alpha}}{\rightarrow} M_{\alpha}\stackrel{j_{\alpha}}{\rightarrow} M,
$$
 where $j_{\alpha}$ is the inclusion. For $x\in M$, $\alpha x=j_{\alpha}(y)$ for some $y\in M_{\alpha}$ since $\alpha\mbox{coker}(\phi_{\alpha})=0$, giving a factorization
 $$
 M\stackrel{\gamma_{\alpha}}{\rightarrow} M_{\alpha}\stackrel{j_{\alpha}}{\rightarrow}M
 $$
 of $\alpha 1_M:M\rightarrow M$. Let $K$ be the kernel of the surjection $\psi_{\alpha}$. We have an exact sequence
 $$
 {\rm Hom}_A(M,A^n)\stackrel{\psi_{\alpha}^*}{\rightarrow} {\rm Hom}_A(M,M_{\alpha})\rightarrow {\rm Ext}_A^1(M,K)
 $$
 and $\mbox{Ext}_A^1(M,K)$ is almost zero since $M$ is almost projective. Thus $\beta\gamma_{\alpha}$ is in the image of $\psi_{\alpha}^*$; that is, there exists an $A$-module homomorphism $u_{\epsilon}:M\rightarrow A^n$ such that $\psi_{\alpha}\circ u_{\epsilon}=\beta\gamma_{\alpha}$. Let $v_{\epsilon}=\phi_{\alpha}$. From the commutative diagram
 $$
 \begin{array}{ccccc}
 M&\stackrel{u_{\epsilon}}{\rightarrow}&A^n&\stackrel{\phi_{\alpha}}{\rightarrow}&M\\
 &\searrow\beta\gamma_{\alpha}&\downarrow \psi_{\alpha}& j_{\alpha}\nearrow\\
 &&M_{\alpha}
 \end{array}
 $$
 we see that $v_{\epsilon}\circ u_{\epsilon}=\alpha\beta 1_M=\epsilon 1_M$, giving the sequence (\ref{eq22}).
 \end{proof}

\begin{lemma}\label{Lemma5} Suppose that $A$ is an $R$-algebra and $M$ is an almost projective $A$-module. Then  $M$ is an almost flat $A$-module.
\end{lemma}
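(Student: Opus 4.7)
The plan is to mimic the classical proof that projective implies flat, replacing a genuine splitting of a free presentation by the ``almost splitting'' produced by the almost vanishing of $\mbox{Ext}^1$.

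First, I would pick any short exact sequence $0 \to K \stackrel{i}{\to} F \stackrel{\pi}{\to} M \to 0$ with $F$ a free $A$-module, and let $[\xi] \in \mbox{Ext}^1_A(M,K)$ denote its extension class. Since $M$ is almost projective, $\mbox{Ext}^1_A(M,K)$ is almost zero, so $\epsilon[\xi] = 0$ for every $\epsilon \in I$. Using the standard identification of $\epsilon[\xi]$ with the pullback extension along $\epsilon \cdot 1_M \colon M \to M$, this vanishing is equivalent to the existence of a homomorphism $s_\epsilon \colon M \to F$ satisfying $\pi \circ s_\epsilon = \epsilon \cdot 1_M$.

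Next, I would convert $s_\epsilon$ into an ``almost retraction'' of $i$. Define $r_\epsilon \colon F \to F$ by $r_\epsilon(f) = \epsilon f - s_\epsilon(\pi(f))$. Since $\pi \circ r_\epsilon = 0$ and $i$ is injective, $r_\epsilon$ factors uniquely as $i \circ \tilde r_\epsilon$ for some $\tilde r_\epsilon \colon F \to K$; a direct check gives $\tilde r_\epsilon \circ i = \epsilon \cdot 1_K$.

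Finally, I would establish almost flatness by showing $\mbox{Tor}_1^A(M,N)$ is almost zero for every $A$-module $N$. Because $F$ is free, the long exact sequence for $\mbox{Tor}$ identifies $\mbox{Tor}_1^A(M,N)$ with the kernel of $i \otimes 1_N \colon K \otimes_A N \to F \otimes_A N$. For any such kernel element $x$, we have $\epsilon x = (\tilde r_\epsilon \otimes 1_N)(i \otimes 1_N)(x) = 0$, so $\epsilon$ annihilates $\mbox{Tor}_1^A(M,N)$. Since $\epsilon \in I$ was arbitrary, $\mbox{Tor}_1^A(M,N)$ is almost zero, hence $M$ is almost flat. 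The only non-formal step is translating the Ext-vanishing into the section $s_\epsilon$; once that is in hand, the rest is the classical ``direct summand of a free module'' argument with identities replaced by $\epsilon \cdot 1$, and notably it needs no almost-finite-generation hypothesis, so Lemma \ref{Lemma12} is not required.
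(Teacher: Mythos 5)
Your proof is correct and follows essentially the same route as the paper: both extract, from the almost vanishing of $\mathrm{Ext}^1_A(M,\ker\pi)$, a map $s_\epsilon\colon M\to F$ with $\pi\circ s_\epsilon=\epsilon\cdot 1_M$ for a (free or projective) presentation $\pi\colon F\to M$, and then deduce $\epsilon\,\mathrm{Tor}_1^A(M,N)=0$ for all $\epsilon\in I$. The only cosmetic difference is the last step: the paper factors $\epsilon\cdot 1_M$ through $F$ and invokes functoriality of $\mathrm{Tor}_1$ together with $\mathrm{Tor}_1^A(F,N)=0$, while you build the almost retraction $\tilde r_\epsilon$ and compute directly on the kernel of $i\otimes 1_N$ -- both likewise observe that no almost finite generation is needed.
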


\begin{proof} The proof of the converse of Lemma \ref{Lemma12}, taking $\phi_{\alpha}:P\rightarrow M$ to be a surjection of a projective $A$-module $P$ onto $M$, shows that  for all $\epsilon\in I$,  there exists a factorization 
$$
M\stackrel{u_{\epsilon}}{\rightarrow} P\stackrel{\phi_{\alpha}}{\rightarrow} M
$$
such that $\phi_{\alpha}u_{\epsilon}=\epsilon 1_M$. Let $N$ be an $A$-module. Then there exists a factorization of 
$\epsilon:\mbox{Tor}_1^A(M,N)\rightarrow \mbox{Tor}_1^A(M,N)$ by 
$$
{\rm Tor}_1^A(M,N)\stackrel{u_{\epsilon}^*}{\rightarrow}{\rm Tor}_1^A(P,N)\stackrel{\phi_{\alpha}^*}{\rightarrow}
{\rm Tor}_1^A(M,N).
$$
Since a projective module is flat, $\mbox{Tor}_1^A(P,N)=0$. Thus $\epsilon\mbox{Tor}_1^A(M,N)=0$. Since this holds for all $\epsilon\in I$, $\mbox{Tor}_1^A(M,N)$ is almost zero for all $A$-modules $N$ and so $M$ is almost flat. 
\end{proof}

The following transparent proof of Lemma \ref{Lemma10} is by Hema Srinivasan.
\begin{lemma}\label{Lemma10}(\cite[Lemma 2.4.17]{GR})
Let $S$ be a commutative ring, $M$ an $S$-module and
$$
S^n\stackrel{\phi}{\rightarrow} S^m\rightarrow C\rightarrow 0
$$
be an exact sequence of $S$-modules. Let $C'$ be the cokernel of the dual homomorphism
$\phi^*:(S^m)^*\rightarrow (S^n)^*$. Then there is a natural isomorphism of $S$-modules
$$
{\rm Tor}_1^S(C',M)\cong {\rm Hom}_S(C,M)/{\rm Image}({\rm Hom}_S(C,S)\otimes M).
$$
\end{lemma}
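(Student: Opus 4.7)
The plan is to dualize the given presentation of $C$, compute $\mbox{Tor}_1^S(C',M)$ from the resulting resolution of $C'$, and then rewrite everything in terms of $\mbox{Hom}$-modules using that $S^m$ and $S^n$ are finitely generated free.

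First I would apply $\mbox{Hom}_S(-,S)$ to $S^n \stackrel{\phi}{\rightarrow} S^m \to C \to 0$ to obtain the four-term exact sequence $0 \to C^* \to (S^m)^* \stackrel{\phi^*}{\rightarrow} (S^n)^* \to C' \to 0$, where $C^* := \mbox{Hom}_S(C,S) = \ker(\phi^*)$. Set $K := \mbox{image}(\phi^*)$ and split this into the two short exact sequences $0 \to C^* \to (S^m)^* \to K \to 0$ and $0 \to K \to (S^n)^* \to C' \to 0$. Applying $-\otimes_S M$ to the second sequence, and using that $(S^n)^*$ is free so $\mbox{Tor}_1^S((S^n)^*,M)=0$, I obtain $\mbox{Tor}_1^S(C',M) = \ker(K\otimes_S M \to (S^n)^*\otimes_S M)$. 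Applying $-\otimes_S M$ to the first sequence identifies $K\otimes_S M$ with the cokernel of $C^*\otimes_S M \to (S^m)^*\otimes_S M$. Combining these,
$$
\mbox{Tor}_1^S(C',M)\;\cong\; \frac{\ker\left(\phi^*\otimes 1_M\colon (S^m)^*\otimes_S M \to (S^n)^*\otimes_S M\right)}{\mbox{image}\left(C^*\otimes_S M \to (S^m)^*\otimes_S M\right)}.
$$

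Next I would use the natural isomorphism $(S^k)^*\otimes_S M \cong \mbox{Hom}_S(S^k,M)$, which holds because $S^k$ is finitely generated free, to rewrite both numerator and denominator. Under this isomorphism $\phi^*\otimes 1_M$ corresponds to the map induced by $\phi$ on $\mbox{Hom}_S(-,M)$, whose kernel is $\mbox{Hom}_S(C,M)$ by applying $\mbox{Hom}_S(-,M)$ to the original right-exact sequence $S^n \to S^m \to C \to 0$. The inclusion $C^*\hookrightarrow (S^m)^*$ factors through $\ker(\phi^*)$, so after tensoring with $M$ the image of $C^*\otimes_S M$ lies in that kernel; moreover, under the identifications above it coincides with the image of the canonical evaluation map $\mbox{Hom}_S(C,S)\otimes_S M \to \mbox{Hom}_S(C,M)$. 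Substituting these identifications yields the claimed isomorphism.

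The only real obstacle is the naturality check at the end: one must confirm that the map on $C^*\otimes_S M$ obtained from the dualization agrees, after passing to the Hom-tensor identifications, with the standard evaluation map $\mbox{Hom}_S(C,S)\otimes_S M\to\mbox{Hom}_S(C,M)$. This reduces to a routine diagram chase using the commutative square relating $\mbox{Hom}_S(-,S)\otimes_S M$ and $\mbox{Hom}_S(-,M)$ at the free modules $S^m$, $S^n$ and tracing the inclusion $C^* \subset (S^m)^*$.
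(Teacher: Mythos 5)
Your proposal is correct and follows essentially the same route as the paper: dualize the presentation, compute $\mathrm{Tor}_1^S(C',M)$ from the resulting exact sequence $0\to\mathrm{Hom}_S(C,S)\to (S^m)^*\to (S^n)^*\to C'\to 0$, and then transport everything through the natural isomorphism $(S^k)^*\otimes_S M\cong\mathrm{Hom}_S(S^k,M)$ together with left-exactness of $\mathrm{Hom}_S(-,M)$. The only cosmetic difference is that you extract $\mathrm{Tor}_1$ by splitting into two short exact sequences and using the long exact sequence, whereas the paper covers $\ker(\phi^*)$ by a projective $P$ and reads off $\mathrm{Tor}_1$ from the resolution; both are standard and the key identifications are identical.
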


\begin{proof} We have an exact sequence
$$
0\rightarrow {\rm Hom}_S(C,S)\stackrel{\Theta}{\rightarrow} (S^m)^*\stackrel{\phi^*}{\rightarrow} (S^n)^*\rightarrow C'\rightarrow 0.
$$

Let $P\rightarrow \mbox{Kernel}(\phi^*)$ be a surjection from a projective $S$-module $P$, giving   an exact sequence
$$
P\stackrel{\Lambda}{\rightarrow}(S^m)^*\stackrel{\phi^*}{\rightarrow} (S^n)^*\rightarrow C'\rightarrow 0.
$$
Then we have a commutative diagram
$$
\begin{array}{ccccccccc}
P&\stackrel{\Lambda}{\rightarrow}&\,\,\,\,\,\,\,\,(S^m)^*&\stackrel{\phi^*}{\rightarrow}& (S^n)^*&\rightarrow &C'&\rightarrow &0\\
&\searrow&\nearrow\Theta\\
&&{\rm Hom}_S(C,S)\\
\end{array}
$$
where the homomorphism $P\rightarrow {\rm Hom}_S(C,S)$ is a surjection and the homomorphism
 $\Theta:{\rm Hom}_S(C,S)\rightarrow (S^m)^*$ is an inclusion.
Tensoring this diagram with $M$ over $S$, we have a commutative diagram
$$
\begin{array}{ccccccccc}
P\otimes M&\stackrel{\Lambda\otimes 1}{\rightarrow}&\,\,\,\,\,\,\,\,(S^m)^*\otimes M&\stackrel{\phi^*\otimes 1}{\rightarrow}& (S^n)^*\otimes M&\rightarrow &C'\otimes M&\rightarrow &0\\
&\searrow&\nearrow \Theta\otimes 1\\
&&{\rm Hom}_S(C,S)\otimes M\\
\end{array}.
$$
We compute that 
$$
{\rm Tor}_1^S(C',M)={\rm Kernel}( \phi^*\otimes 1)/{\rm Image}(\Lambda\otimes 1)=
{\rm Kernel}(\phi^*\otimes 1)/{\rm Image}(\Theta\otimes 1).
$$
Let $K={\rm Kernel}(\phi^*\otimes 1)$.  We  define $S$-module homomorphisms
$$
\alpha=\alpha_m:{\rm Hom}_S(S^m,S)\otimes M\rightarrow {\rm Hom}_S(S^m,M)
$$
by 
$$
\alpha(\gamma\otimes x)(u)=\gamma(u)x
$$
for $\gamma\in {\rm Hom}(S^m,S)$, $x\in M$ and  $u\in S^m$. 
The $\alpha_m$  are  isomorphisms of $S$-modules by Proposition 2 (ii), Chapter II, Section 4.2, page 269 \cite{Bou}. 
The diagram of $S$-modules
$$
\begin{array}{ccccccc}
0&\rightarrow&{\rm Hom}_S(C,M)&\rightarrow &{\rm Hom}_S(S^m,M)&\stackrel{\phi^*}{\rightarrow}&{\rm Hom}_S(S^n,M)\\&&&&\uparrow \alpha_m&&\uparrow \alpha_n\\
0&\rightarrow&K&\rightarrow&(S^m)^*\otimes M&\stackrel{\phi^*\otimes 1}{\rightarrow}&(S^n)^*\otimes M
\end{array}
$$
commutes. Since the rows are exact, we then have an isomorphism $K\cong \mbox{Hom}_S(C,M)$, giving the conclusions of the lemma.
\end{proof}

\begin{proposition}\label{Prop5}(\cite[Proposition 2.4.18]{GR}) Let $A$ be an $R$-algebra. Then
every almost finitely presented almost flat $A$-module is almost  projective.
\end{proposition}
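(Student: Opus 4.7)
My plan is to verify the criterion of Lemma \ref{Lemma12}: to show $M$ is almost projective it suffices to construct, for every $\epsilon \in I$, homomorphisms $u_\epsilon : M \to A^{n(\epsilon)}$ and $v_\epsilon : A^{n(\epsilon)} \to M$ with $v_\epsilon \circ u_\epsilon = \epsilon \cdot 1_M$. The key algebraic input will be Lemma \ref{Lemma10}, which realizes $\mbox{Tor}_1^A(C',M)$ as the obstruction to lifting a homomorphism $C \to M$ through a finite free module when $C$ is finitely presented.

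Fix $\epsilon \in I$. Since $v$ has rank one and is nondiscrete, I can choose $\alpha \in I$ with $v(\alpha) < v(\epsilon)/2$ and set $\beta = \epsilon/\alpha^2 \in I$, so that $\epsilon = \alpha^2 \beta$ with $\alpha, \beta \in I$. Using that $M$ is almost finitely presented, I take a finitely presented $A$-module $C$ and a homomorphism $\phi : C \to M$ whose kernel $K$ and cokernel $Q$ are both annihilated by $\alpha$. Applying Lemma \ref{Lemma10} to a finite presentation of $C$, and letting $C'$ be the cokernel of the dual, gives
$$\mbox{Tor}_1^A(C',M) \;\cong\; \mbox{Hom}_A(C,M)\big/\mbox{Image}\bigl(\mbox{Hom}_A(C,A) \otimes_A M \to \mbox{Hom}_A(C,M)\bigr).$$
Because $M$ is almost flat, the left-hand side is almost zero, so $\beta \phi$ lies in the image; hence $\beta \phi = \sum_{i=1}^{n} \psi_i(\cdot)\, m_i$ for some $\psi_i \in \mbox{Hom}_A(C,A)$ and $m_i \in M$. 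Assembling these data yields $u : C \to A^{n}$ and $v : A^{n} \to M$ with $v \circ u = \beta \phi$.

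The remaining step is a two-stage lift of $u$ to a homomorphism $\hat u : M \to A^{n}$, paying a factor of $\alpha^2$. First, since $\alpha K = 0$, the map $\alpha u$ vanishes on $K$ and descends to $\bar u : \phi(C) \to A^{n}$ satisfying $\bar u \circ \phi = \alpha u$. Second, since $\alpha Q = 0$, we have $\alpha M \subseteq \phi(C)$, which lets me define $\hat u : M \to A^{n}$ by $\hat u(m) := \bar u(\alpha m)$. A short check (write $\alpha m = \phi(c)$ and unwind the definitions) gives $v \circ \hat u = \alpha^2 \beta \cdot 1_M = \epsilon \cdot 1_M$, and Lemma \ref{Lemma12} concludes the proof.

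The main obstacle is conceptual rather than computational: recovering an endomorphism of $M$ from data defined only on the finitely presented approximation $C$ forces the expenditure of one factor of $\alpha$ to kill the kernel of $\phi$ and a second factor of $\alpha$ to land inside the image of $\phi$, which is what makes the factorization $\epsilon = \alpha^2 \beta$ mandatory. This in turn is precisely the place where the nondiscreteness hypothesis on $v$ is used, since one needs $\alpha \in I$ of valuation strictly smaller than $v(\epsilon)/2$ available for every $\epsilon \in I$.
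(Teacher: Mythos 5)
Your argument is correct and follows essentially the same route as the paper's proof: the same factorization $\epsilon=\alpha^2\beta$, the same use of Lemma \ref{Lemma10} applied to a finite presentation of the approximating module together with almost flatness to write $\beta\phi$ as $\sum\psi_i(\cdot)m_i$, and the same two-stage lift through $\phi(C)$ (the paper's $M_\alpha/K_\alpha$) costing one factor of $\alpha$ for the kernel and one for the cokernel, before invoking Lemma \ref{Lemma12}. Only cosmetic differences remain, such as your reuse of the letter $v$ for both the valuation and the map $A^n\to M$.
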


\begin{proof} Let $M$ be an almost finitely presented almost flat $A$-module.
Suppose that $\epsilon\in I$. Factor $\epsilon=\alpha^2\beta$ with $\alpha, \beta\in I$, There exists a finitely presented $A$-module $M_{\alpha}$  and exact sequences of $A$-modules
$$
0\rightarrow K_{\alpha}\rightarrow M_{\alpha}\stackrel{f_{\alpha}}{\rightarrow} M\rightarrow C_{\alpha}\rightarrow 0
$$
where $\alpha K_{\alpha}=\alpha C_{\alpha}=0$ and 
$$
A^m\stackrel{\Psi}{\rightarrow} A^n\stackrel{\Lambda}{\rightarrow} M_{\alpha}\rightarrow 0
$$
for some positive integers $m$ and $n$. Thus the natural isomorphism $f_{\alpha}(M_{\alpha})\cong M_{\alpha}/K_{\alpha}$ induces  natural inclusions 
$$
\alpha M\subset M_{\alpha}/K_{\alpha}\subset M.
$$ 
Let $C'$ be the cokernel of the dual homomorphism $\Psi^*:(A^n)^*\rightarrow (A^m)^*$. Lemma \ref{Lemma10} implies that
$$
{\rm Hom}_A(M_{\alpha},M)/{\rm Im}({\rm Hom}_A(M_{\alpha},A)\otimes_AM)\cong {\rm Tor}_1^A(C',M).
$$
This last module is almost zero since $M$ is almost flat. Thus $\beta f_{\alpha}$ is the image of an element $\sum_{j=1}^n\phi_j\otimes m_j\in {\rm Hom}_A(M_{\alpha},A)\otimes_AM$. Define $v:M_{\alpha}\rightarrow A^n$ by $v(x)=(\phi_1(x),\ldots,\phi_n(x))$ for $x\in  M_{\alpha}$ and $w:A^n\rightarrow M$ by $w(y_1,\ldots,y_n)=\sum_{j=1}^ny_jm_j$. Let $\lambda:M\rightarrow A^n$ be the composition
$$
M\stackrel{\alpha}{\rightarrow} M_{\alpha}/K_{\alpha}\stackrel{\alpha}{\rightarrow}M_{\alpha}\stackrel{v}{\rightarrow} A^n.
$$
Then $w\circ\lambda=\alpha^2\beta 1_M=\epsilon1_M$. By Lemma \ref{Lemma12}, $M$ is an almost projective $A$-module.
\end{proof}

\begin{lemma}\label{Lemma4} Let $A\rightarrow B$ be a homomorphism of $R$-algebras such that the multiplication $\mu_{B|A}:B\otimes_AB\rightarrow B$ makes $B$ an almost flat $B\otimes_AB$-module. Then $\Omega_{B|A}$ is almost zero.
\end{lemma}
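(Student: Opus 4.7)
The plan is to identify $\Omega_{B|A}$ with $J/J^{2}$, where $J = \ker(\mu : B \otimes_{A} B \to B)$, and then extract the vanishing from a Tor sequence obtained by tensoring the defining short exact sequence against $B$ over $B \otimes_{A} B$. This is the standard setup; the only new ingredient is that ``flat'' gets replaced by ``almost flat.''

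First I would record the short exact sequence of $B \otimes_{A} B$-modules
$$
0 \to J \to B \otimes_{A} B \xrightarrow{\mu} B \to 0,
$$
which is exact because $\mu$ is visibly surjective ($b = \mu(b \otimes 1)$). Tensoring over $B \otimes_{A} B$ with $B$, viewed as a $B \otimes_{A} B$-module via $\mu$ itself, yields the exact sequence
$$
\mathrm{Tor}_{1}^{B \otimes_{A} B}(B, B) \to J \otimes_{B \otimes_{A} B} B \to (B \otimes_{A} B) \otimes_{B \otimes_{A} B} B \to B \otimes_{B \otimes_{A} B} B \to 0.
$$
Under the canonical identifications this reads
$$
\mathrm{Tor}_{1}^{B \otimes_{A} B}(B, B) \to J/J^{2} \to B \xrightarrow{\mathrm{id}} B \to 0,
$$
since the two right-most terms are $B$, the induced map between them is the identity, and $J \otimes_{B \otimes_{A} B} (B \otimes_{A} B)/J = J/J^{2}$ (the standard identification; here one uses that $B$-action on $B$ through $\mu$ has kernel $J$).

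The identity $B \to B$ has zero kernel, so the map $J/J^{2} \to B$ is zero, and therefore the map $\mathrm{Tor}_{1}^{B \otimes_{A} B}(B,B) \to J/J^{2}$ is surjective. By hypothesis $B$ is almost flat over $B \otimes_{A} B$, so $\mathrm{Tor}_{1}^{B \otimes_{A} B}(B,B)$ is annihilated by $I$. Since a surjective image of an almost zero module is almost zero, $J/J^{2}$ is almost zero, and the well-known isomorphism $\Omega_{B|A} \cong J/J^{2}$ (\cite[Theorem 25.2]{Mat}) gives the conclusion.

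There is no real obstacle here: once the Tor sequence is written down, the argument is a formal consequence of the definition of almost flatness and the standard presentation of K\"ahler differentials via the kernel of multiplication. The only bookkeeping point to watch is that $B$ is regarded as a $B \otimes_{A} B$-module through $\mu$ on \emph{both} tensor factors, which is what ensures the composite map $B \to B$ at the right end of the sequence is the identity (so that the image of $J/J^{2}$ really does land in zero).
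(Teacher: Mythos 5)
Your argument is correct and is essentially identical to the paper's proof: both tensor the short exact sequence $0\to J\to B\otimes_AB\to B\to 0$ with $B$ over $B\otimes_AB$, observe that the induced map $J/J^2\to B$ is zero so that $\mathrm{Tor}_1^{B\otimes_AB}(B,B)$ surjects onto $J/J^2\cong\Omega_{B|A}$, and conclude by almost flatness. The only difference is notational (the paper writes $S/J$ where you write $B$).
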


\begin{proof} Let $J$ be the kernel of  $\mu_{B|A}:B\otimes_AB\rightarrow B$ and let $S=B\otimes_AB$  so that we have a short exact sequence
$$
0\rightarrow J\rightarrow S\rightarrow S/J\rightarrow 0
$$ 
of $S$-modules where $S/J$ is an almost flat $S$-module.  Tensoring with $S/J$ over $S$, we have an exact sequence
$$
\mbox{Tor}^S_1(S/J,S/J)\rightarrow J/J^2\rightarrow S/J\rightarrow S/J\rightarrow 0
$$
where $\mbox{Tor}_1^S(S/J,S/J)$ is almost zero since $S/J$ is an almost flat $S$-module. We have that $J/J^2\rightarrow S/J$ is the zero map. Thus $\Omega_{B|A} =J/J^2$ is  almost zero. 
\end{proof}

The following definition is  in Chapter 3 of \cite{GR}.
\begin{definition}\label{Def2} An extension $A\rightarrow B$ of $R$-algebras is almost finite \'etale if 
\begin{enumerate}
\item[1)] (Almost finite projectiveness) $B$ is an almost finitely presented almost projective $A$-module.
\item[2)] (Almost unramifiedness) $B$ is an almost projective $B\otimes_AB$-module by the multiplication map $\mu_{B|A}:B\otimes_AB\rightarrow B$.
\end{enumerate}
\end{definition}

A homomorphism $A\rightarrow B$ satisfies that $B$ is a finitely presented and projective $A$-module and $\mu_{B|A}:B\otimes_AB\rightarrow B$ makes $B$ a projective $B$-module if and only if $A\rightarrow B$ is finite \'etale (\cite[Proposition IV.18.3.1]{EGA4}). This is also proven in \cite{F}.

\begin{proposition}\label{Prop6}(\cite[Proposition 3.1.4]{GR})  A homomorphism $A\rightarrow B$ of $R$-algebras is almost unramified ($B$ is an almost projective $B\otimes_AB$-module under the multiplication $\mu_{B|A}:B\otimes_AB\rightarrow B$) if and only if there exists  an element $e\in (B\otimes_AB)_*$ such that $e^2=e$, $\mu_*(e)=1$ and $e\mbox{Kernel}(\mu)_*=0$ where $\mu=\mu_{B|A}:B\otimes_AB\rightarrow B$ is the multiplication map and $\mu_*:(B\otimes_AB)_*\rightarrow B_*$ is the induced homomorphism.
\end{proposition}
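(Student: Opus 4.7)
The plan is to prove both directions via Lemma \ref{Lemma12}, exploiting that $B$ is a cyclic $S$-module, where $S := B \otimes_A B$ and $J := \ker \mu$. Since $B = S/J$, any $S$-linear map $u \colon B \to S$ is multiplication by some $x \in \mathcal{A} := \mathrm{Ann}_S(J)$, and taking $v := \mu \colon S \to B$ gives $vu = \mu(x)\,1_B$. Moreover, any factorization $B \xrightarrow{u} S^{n(\epsilon)} \xrightarrow{v} B$ of $\epsilon\,1_B$ collapses to $n(\epsilon) = 1$: well-definedness of $u$ forces the coordinates of $u(\bar 1)$ to lie in $\mathcal{A}$, and combining them with the coordinates of $v$ yields a single $x \in \mathcal{A}$ with $\mu(x) = \epsilon$. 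Thus Lemma \ref{Lemma12}'s criterion reduces to the pointwise statement: for each $\epsilon \in I$, there exists $x_\epsilon \in \mathcal{A}$ with $\mu(x_\epsilon) = \epsilon$ in $B$.

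For the easier direction, assume $e$ exists. Since $R$ is rank-one non-discrete, the value group is dense, so every $\epsilon \in I$ factors as $\epsilon = \alpha\beta$ with $\alpha,\beta \in I$; I set $x_\epsilon := \beta\, e(\alpha) \in S$. The hypothesis $\mu_*(e) = i_B(1)$ gives pointwise $\mu(e(\alpha)) = \alpha$, so $\mu(x_\epsilon) = \alpha\beta = \epsilon$. For $j \in J$, the element $i_S(j) \in J_* \subseteq S_*$ satisfies $e \cdot i_S(j) = 0$ by $eJ_* = 0$; evaluating via \eqref{eq20} at the factorization $\alpha\beta$ yields $e(\alpha) \cdot \beta j = 0$, whence $x_\epsilon J = 0$ and $x_\epsilon \in \mathcal{A}$. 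Lemma \ref{Lemma12} gives almost projectivity of $B$ over $S$.

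For the harder direction, assume $B$ is almost projective over $S$. Lemma \ref{Lemma12} combined with the cyclicity reduction yields, for each $\epsilon$, some $x_\epsilon \in \mathcal{A}$ with $\mu(x_\epsilon) = \epsilon$. The main task is to assemble these pointwise lifts into a single $R$-linear map $e \colon I \to \mathcal{A}$, equivalently an element $e \in \mathcal{A}_* \subseteq S_*$ with $\mu_*(e) = i_B(1)$, and then to verify $eJ_* = 0$ and $e^2 = e$. Granted existence of such an $e$, the latter two properties are formal: $eJ_* = 0$ because $x_\epsilon J = 0$ propagates through formula \eqref{eq20}, and $e^2 = e$ by the classical dual-basis manipulation (use $eJ_* = 0$ to symmetrize tensor factors, then $\mu_*(e) = 1$ to collapse, exactly as in the proof that a separable algebra has a diagonal idempotent in $B \otimes_A B$). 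The chief obstacle is the $R$-linear assembly itself: lifting the inclusion $I \hookrightarrow \mu(\mathcal{A}) \subseteq B$ (well-defined because almost projectivity forces $IB \subseteq \mu(\mathcal{A})$) through $\mathcal{A} \twoheadrightarrow \mu(\mathcal{A})$ is obstructed by a class in $\mathrm{Ext}^1_R(I, \mathcal{A} \cap J)$ which is only almost zero, so naive lifting produces only $\epsilon e$ for each $\epsilon$, not $e$ itself. The cleanest resolution, implicit in \cite{GR}, is to pass to the almost category of $R$-modules, in which $B^a$ is genuinely projective over $S^a$; the ordinary dual-basis lemma there produces the required idempotent in $(B \otimes_A B)^a$, which transports back to $e \in S_*$ under the $(-)_*$-equivalence between almost modules and their ``$*$''-versions.
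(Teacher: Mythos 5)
Your reduction of Lemma \ref{Lemma12}'s criterion to the pointwise statement ``for each $\epsilon\in I$ there exists $x_\epsilon\in\mathcal{A}=\mathrm{Ann}_S(J)$ with $\mu(x_\epsilon)=\epsilon$'' is correct and is a clean repackaging of what the paper does (the paper's $e_\epsilon=\mu_\epsilon(1)$ is exactly such an $x_\epsilon$), and your treatment of the direction ``$e$ exists $\Rightarrow$ almost projective'' matches the paper's. The problem is the other direction, which you correctly identify as the hard part and then do not prove. Your proposed escape --- pass to the almost category, where ``$B^a$ is genuinely projective over $S^a$,'' and apply the ordinary dual-basis lemma --- does not work: almost projectivity is defined by almost vanishing of $\mathrm{Ext}^1$, and almost projective objects are \emph{not} projective objects of the almost category, so there is no dual-basis lemma to invoke there. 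Moreover the whole point of the paper's treatment is to avoid that categorical machinery.

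The missing idea is elementary and is the crux of the paper's proof: the pointwise lifts are automatically compatible. Since $\mu(\epsilon\cdot 1-x_\epsilon)=0$, we have $\epsilon\cdot 1-x_\epsilon\in J$, and since $x_\delta J=0$ this gives $x_\delta(\epsilon\cdot 1-x_\epsilon)=0$, i.e.\ $\epsilon x_\delta=x_\delta x_\epsilon$; by symmetry $\delta x_\epsilon=x_\epsilon x_\delta=\epsilon x_\delta$ for all $\delta,\epsilon\in I$ (and in particular $x_\epsilon^2=\epsilon x_\epsilon$). This relation is exactly what lets one define $e(\lambda):=\alpha x_\beta$ for any factorization $\lambda=\alpha\beta$ with $\alpha,\beta\in I$ (such factorizations exist because $v$ is nondiscrete of rank one): well-definedness and additivity of $e$ both reduce to $\delta x_\epsilon=\epsilon x_\delta$, and the three properties $e^2=e$, $\mu_*(e)=1$, $e J_*=0$ then follow by direct computation with the multiplication rule (\ref{eq20}), using $x_\gamma^2=\gamma x_\gamma$ for the idempotency. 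Your ``$\mathrm{Ext}^1_R(I,\mathcal{A}\cap J)$ obstruction'' framing is a red herring: no lifting problem needs to be solved, because the section $\lambda\mapsto\alpha x_\beta$ is already canonical once the compatibility relation is in hand. As written, your argument for this direction is incomplete and the suggested repair is not valid.
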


\begin{proof} 


We have a short exact sequence of $B\otimes_AB$-modules
$$
0\rightarrow J_{B/A}\rightarrow B\otimes_AB\stackrel{\mu_{B|A}}{\rightarrow} B\rightarrow 0.
$$

First suppose that $\phi:A\rightarrow B$ is unramified; that is, $B$ is almost projective under the homomorphism $\mu_{B|A}:B\otimes_AB\rightarrow B$. Let $\epsilon\in I$. The proof of the converse of Lemma \ref{Lemma12}, taking $\phi_{\alpha}=\mu_{B|A}:B\otimes_AB\rightarrow B$, shows that  there exists a factorization of homomorphisms of $B\otimes_AB$-modules, 
$$
B\stackrel{u_{\epsilon}}{\rightarrow}B\otimes_AB\stackrel{\mu_{B|A}}{\rightarrow} B
$$
of $\epsilon 1_B$.

 Let $e_{\epsilon}=u_{\epsilon}(1)$. We  have that $\mu_{B|A}(e_{\epsilon})=\epsilon 1$.
 
 Since $B$ is a $B\otimes_AB$-module under the action $xy=\mu_{B|A}(x)y$ for $x\in B\otimes_AB$ and $y\in B$, and $u_{\epsilon}$ is $B\otimes_AB$-linear, $u_{\epsilon}(xy)=x u_{\epsilon}(y)$. Thus
$$
e_{\epsilon}^2=e_{\epsilon}u_{\epsilon}(1)=u_{\epsilon}(\mu_{B|A}(e_{\epsilon})1)=\mu_{\epsilon}(\mu_{B|A}(e_{\epsilon})=\epsilon e_{\epsilon}.
$$
For $x\in J_{B/A}$, we have 
$$
xe_{\epsilon}=xu_{\epsilon}(1)=u_{\epsilon}(\mu_{B|A}(x)1)=0.
$$
Thus $e_{\epsilon}J_{B/A}=0$. 

Suppose $\delta,\epsilon\in I$ with corresponding elements $e_{\delta},e_{\epsilon}\in B\otimes_AB$. Then 
$\delta 1-e_{\delta}$, $\epsilon 1-e_{\epsilon}\in J_{B/A}$ which implies 
\begin{equation}\label{eq21}
\delta e_{\epsilon}=\epsilon e_{\delta}
\end{equation}
for all $\delta,\epsilon\in I$. 

Define $e\in (B\otimes_AB)_*=\mbox{Hom}_R(I,B\otimes_AB)$ by 
$$
e(\lambda)=\alpha e_{\beta}
$$
for $\lambda\in I$, if $\lambda=\alpha\beta$ with $\alpha,\beta\in I$. We will verify that this map is well defined. 
Suppose that $\lambda=\alpha\beta=\delta\epsilon$ for some $\lambda,\alpha,\beta,\delta,\epsilon \in I$.
We have that $\alpha$ divides  $\delta$ or $\delta$ divides $\alpha$ in $R$. Without loss of generality, we may assume that $\alpha$ divides $\delta$.
We have that $ae_{\epsilon}=\epsilon e_a$ by (\ref{eq21}). Thus
$$
\delta e_{\epsilon}=\frac{\delta}{\alpha}\alpha e_{\epsilon}=\frac{\delta}{\alpha}\epsilon e_{\alpha}=\beta e_{\alpha}=\alpha e_{\beta}.
$$
 We now verify that $e$ is an $R$-module homomorphism. Suppose that $x\in R$ and $\lambda\in I$. Factor $\lambda=\alpha\beta$ with $\alpha,\beta\in I$. We have 
$$
e(x\lambda)=(x\alpha)e_{\beta}=x(\alpha e_{\beta})=xe(\lambda).
$$
Suppose that $\lambda_1,\lambda_2\in I$.
We can find $\alpha,\beta_1,\beta_2\in I$ such that
$\lambda_1=\alpha\beta_1$ and $\lambda_2=\alpha\beta_2$. Then
$$
e(\lambda_1+\lambda_2)=(\beta_1+\beta_2)e_{\alpha}=\beta_1e_{\alpha}+\beta_2e_{\alpha}=e(\lambda_1)+e(\lambda_2).
$$
Thus $e$ is an $R$-module homomorphism, and so $e\in (B\otimes_AB)_*$.
For $\lambda\in I$, let $\lambda=\alpha\beta$ with $\alpha,\beta\in I$. Write $\alpha=\alpha_1\gamma$ and $\beta=\beta_1\gamma$ with $\alpha_1,\beta_1,\gamma\in I$. Then by (\ref{eq20}),
$$
e^2(\lambda)=(e\circ e)(\alpha\beta)=e(\alpha)e(\beta)=(\alpha_1e_{\gamma})(\beta_1e_{\gamma})
=\alpha_1\beta_1e_{\gamma}^2=\alpha_1\beta_1\gamma e_{\gamma}=e(\lambda).
$$
Thus $e^2=e$. For $\lambda\in I$,
$$
[(\mu_{B|A})_*(e)](\lambda)=\mu_{B|A}(e(\lambda))=\mu_{B|A}(\alpha e_{\beta})=\alpha \mu_{B|A}(e_{\beta})=\alpha\beta=\lambda.
$$
Thus $(\mu_{B|A})_*(e)=1$. For $x\in (J_{B/A})_*$ and $\lambda\in I$, write $\lambda=\alpha\beta\gamma$. Then by (\ref{eq20}),
$$
(xe)(\lambda)= x(\alpha)e(\beta\gamma)=x(\alpha)\beta e_{\gamma}=0
$$
since $x(\alpha)\beta\in J_{B / A}$. Thus $e(J_{B/A})_*=0$. We have shown that $e$ satisfies the three conditions of the proposition.


Conversely, suppose that $e\in (B\otimes_AB)_*$ has the given properties. For $\epsilon\in I$, let 
$$
e_{\epsilon}=\epsilon e=e(\epsilon)\in B\otimes_AB.
$$
We have that  $\mu_{B|A}(e_{\epsilon})=\epsilon$ and $e_{\epsilon}\mbox{Kernel}(\mu_{B|A})=0$.  Define $u_{\epsilon}:B\rightarrow B\otimes_AB$ by $u_{\epsilon}(b)=e_{\epsilon}(1\otimes b)$ for $b\in B$ and $v_{\epsilon}:B\otimes_AB\rightarrow B$ by $v_{\epsilon}=\mu_{B|A}$. For $x\in B\otimes_AB$ and $b\in B$,
$$
u_{\epsilon}(xb)=u_{\epsilon}(\mu_{B|A}(x)b)=\epsilon e (1\otimes \mu_{B|A}(x)b)=\epsilon e(x(1\otimes b))
=x\epsilon e(1\otimes b)
$$
since $x-1\otimes \mu_{B|A}(x)\in J_{B/A}$. Thus $u_{\epsilon}$ is a $B\otimes_AB$-module homomorphism.
For $b\in B$, $v_{\epsilon}\circ u_{\epsilon}(b)=\epsilon b$. 
Thus $B$ is an almost projective $B\otimes_AB$-module by Lemma \ref{Lemma12}.

\end{proof}

The following definition is in Chapter 3 of \cite{GR}.

\begin{definition}\label{Def3} A homomorphism  $A\rightarrow B$ of $R$-algebras is weakly \'etale if $B$ is an almost flat $A$-module and $B$ is an almost flat $B\otimes_AB$-module by the multiplication map $\mu_{B|A}$.
\end{definition}

\section{Extensions of valuation rings}\label{Section3}

\begin{lemma}\label{Lemma3}(Chapter VI, Section 3, no. 6, Lemma 1 \cite{BouC}) Let $A$ be a valuation ring. All torsion free finitely generated $A$-modules are free. All finitely generated ideals of $A$ are principal. All torsion free $A$-modules are flat.
\end{lemma}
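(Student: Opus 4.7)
The plan is to establish the three statements in the order (2), (1), (3), with each one feeding into the next.

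First I would prove (2), that finitely generated ideals are principal. Given $a_1, \ldots, a_n \in A$, since the value group is totally ordered, there is an index $i$ with $v(a_i)$ minimal among $v(a_1), \ldots, v(a_n)$. For each $j$ we have $v(a_j/a_i) = v(a_j) - v(a_i) \geq 0$, hence $a_j \in a_i A$, so $(a_1, \ldots, a_n) = (a_i)$.

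Next I would prove (1). Let $M$ be finitely generated and torsion free, and choose a generating set $m_1, \ldots, m_n$ of minimal cardinality $n$. I claim the only relation $\sum c_i m_i = 0$ is the trivial one. If some $c_i \neq 0$, pick $k$ such that $v(c_k)$ is minimal among the nonzero values; then for every $i$ we can write $c_i = c_k d_i$ with $d_i \in A$ and $d_k = 1$ (this is essentially the content of (2) applied to the ideal generated by the $c_i$). Then $c_k \sum d_i m_i = 0$ and torsion-freeness forces $\sum d_i m_i = 0$. Since $d_k = 1$, the generator $m_k$ is an $A$-linear combination of the others, contradicting minimality of $n$. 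Hence $m_1, \ldots, m_n$ form a free basis and $M \cong A^n$.

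Finally I would prove (3) using the standard Tor criterion: an $A$-module $M$ is flat if and only if $\mathrm{Tor}_1^A(A/J, M) = 0$ for every finitely generated ideal $J$. By (2), every such $J$ is principal, say $J = (a)$. If $a = 0$ the statement is trivial; otherwise $a$ is a nonzerodivisor in the domain $A$, so from the short exact sequence $0 \to A \xrightarrow{\cdot a} A \to A/(a) \to 0$ we get $\mathrm{Tor}_1^A(A/(a), M) \cong \{m \in M : am = 0\}$, which vanishes since $M$ is torsion free. Thus $M$ is flat.

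The three steps are essentially routine, and there is no real obstacle; the one point requiring care is the minimality argument in (1), where the total ordering of the value group is used to single out a coefficient that divides all the others in the relation, so that torsion-freeness can convert a nontrivial relation into a redundancy among the generators.
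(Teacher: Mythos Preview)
Your proof is correct and follows the standard argument. The paper does not supply its own proof of this lemma; it simply cites Bourbaki (Commutative Algebra, Chapter~VI, \S3, no.~6, Lemma~1), where essentially the same reasoning appears.
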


\begin{lemma}\label{Lemma50} Let $A\rightarrow B$ be an extension of valuation rings. Then $A\rightarrow B$ is faithfully flat. 
\end{lemma}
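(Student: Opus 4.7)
The plan is to split the statement into flatness and the faithful part, both of which should fall out immediately from the structure of valuation rings combined with Lemma \ref{Lemma3}.

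First I would establish flatness. Since $A \to B$ is an extension of valuation rings, $B$ is a subring of its fraction field and contains $A$, so $B$ is a torsion-free $A$-module (any nonzero element of $A$ is a unit in the fraction field of $B$, hence a nonzerodivisor on $B$). Lemma \ref{Lemma3} then tells us directly that every torsion-free $A$-module is flat, so $B$ is flat over $A$.

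Next I would promote flatness to faithful flatness. I would invoke the standard criterion that a flat homomorphism $A \to B$ is faithfully flat if and only if $\mathfrak{m} B \neq B$ for every maximal ideal $\mathfrak{m}$ of $A$. Since $A$ is local with unique maximal ideal $\mathcal{M}_A$, this reduces to checking that $\mathcal{M}_A B \neq B$. Because $A \to B$ is an extension of valuation rings, the valuation on $B$ restricts to that on $A$, so any $a \in \mathcal{M}_A$ has positive value in $B$ and therefore lies in $\mathcal{M}_B$. Consequently $\mathcal{M}_A B \subseteq \mathcal{M}_B \subsetneq B$, which gives the required condition.

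There is no real obstacle here; both steps are essentially one-line arguments once Lemma \ref{Lemma3} and the dominance of the inclusion $\mathcal{M}_A \subseteq \mathcal{M}_B$ are in hand. The only subtle point worth flagging is the (implicit) convention that an \emph{extension} of valuation rings means in particular that the inclusion $A \hookrightarrow B$ is a local homomorphism, so that $\mathcal{M}_A \subseteq \mathcal{M}_B$; this is automatic when $B$ arises as the valuation ring of a valued field extension of $(K,v)$, which is the situation of interest throughout the paper.
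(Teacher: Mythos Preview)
Your proof is correct and follows the same route as the paper: flatness from Lemma~\ref{Lemma3} (torsion-free over a valuation ring implies flat), and faithful flatness from the criterion $\mathcal{M}_A B \neq B$, which the paper cites as \cite[Theorem~7.2]{Mat}. The only difference is that you spell out the details of the local inclusion $\mathcal{M}_A \subseteq \mathcal{M}_B$, whereas the paper leaves this implicit.
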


\begin{proof} This follows from Lemma \ref{Lemma3} and  \cite[Theorem 7.2]{Mat}.
\end{proof}

\begin{lemma}\label{Lemma1} Suppose that $A\rightarrow B$ is an extension of domains, with respective quotient fields $K$ and $L$ such that $B$ is a flat $A$-module. Then the natural homomorphism $B\otimes_AB\rightarrow L\otimes_KL$ is an injection.
\end{lemma}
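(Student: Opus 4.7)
The plan is to factor the natural homomorphism as the composition
\[
B \otimes_A B \;\longrightarrow\; B \otimes_A L \;\longrightarrow\; L \otimes_A L \;\longrightarrow\; L \otimes_K L,
\]
and show that each of the first two arrows is injective while the third is an isomorphism, so that the composite is an injection.

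First, since $B$ is flat over $A$ by hypothesis, tensoring the inclusion $B \hookrightarrow L$ by $B$ over $A$ preserves injectivity, yielding the first arrow. Next I would observe that $L$ is flat as an $A$-module: the field $K = (A \setminus \{0\})^{-1}A$ is flat over $A$ as a localization, and since $L$ is a $K$-vector space, it is isomorphic to a direct sum of copies of $K$ and therefore flat over $A$ as well. Tensoring the inclusion $B \hookrightarrow L$ by the flat module $L$ then gives the second arrow as an injection.

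For the third arrow I would exhibit the natural surjection $L \otimes_A L \to L \otimes_K L$ as an isomorphism by a formal tensor-product manipulation. The key identifications are $K \otimes_A K \cong K$ (idempotency of the localization $A \to K$) and $K \otimes_A L \cong L$ (every nonzero element of $A$ is already invertible in $L$, so inverting them has no effect). Combined with associativity of tensor products these give
\[
L \otimes_A L \;\cong\; L \otimes_K K \otimes_A K \otimes_K L \;\cong\; L \otimes_K (K \otimes_A K) \otimes_K L \;\cong\; L \otimes_K L,
\]
which one verifies on simple tensors to agree with the natural map.

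The argument is almost entirely routine; the only point requiring care is verifying that the isomorphism in step 3 coincides with the natural surjection, and this amounts to tracking simple tensors through the chain of identifications. Composing the three arrows then yields the required injectivity of $B \otimes_A B \to L \otimes_K L$.
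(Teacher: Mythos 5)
Your proof is correct and follows essentially the same route as the paper: factor the map through $B\otimes_AB\to B\otimes_AL\to L\otimes_AL\cong L\otimes_KL$, using flatness of $B$ for the first arrow, flatness of $L$ over $A$ (via $K$) for the second, and the localization identities $K\otimes_AK\cong K$, $L\otimes_AK\cong L$ for the final identification. No substantive differences to report.
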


\begin{proof}

Tensor the injection $B\rightarrow L$ with $\otimes_AB$ to get an injection $B\otimes_AB\rightarrow L\otimes_AB$. 

The field $L$ is a flat $A$-module, since $K$ is a flat $A$-module and $L$ is a flat $K$-module, so $L\otimes_AB$ is a flat $B$-module. Now tensor the injection $B\rightarrow L$ with $(L\otimes_AB)\otimes_B$ to get an injection $L\otimes_AB\rightarrow L\otimes_AL$.

We have that $L\otimes_AL\cong (L\otimes_AK)\otimes_KL$ and $L\otimes_AK$ is just the localization $S^{-1}L$ where $S=K\setminus\{0\}$, so $L\otimes_AK\cong L$. Thus $L\otimes_AL\cong L\otimes_KL$ and so we have an injection $B\otimes_AB\rightarrow L\otimes_KL$.
\end{proof}

Let $L/K$ be a finite  field extension. The trace form $t_{L/K}:L\times L\rightarrow K$ is defined by $t_{L/K}(\alpha,\beta)=\mbox{Trace}_{L/K}(\alpha\beta)$ for $\alpha,\beta\in L$. Here $\mbox{Trace}_{L/K}$ is the trace of $L$ over $K$. The form $t_{L/K}$ is a nondegenerate symmetric bilinear form if $L/K$ is separable (Proposition 2.8 of Chapter 1 \cite{N}). Suppose that $L/K$ is a finite separable field extension. Let $e_1,\ldots,e_n$ be a basis of $L$ as a vector space over $K$. Let $A$ be the matrix of $t_{L/K}$ with respect to the basis $\{e_1,\ldots,e_n\}$. We have that $\mbox{det}(A)\ne 0$ since $t_{L/K}$
is nondegenerate (Chapter XIII, Section 6, Proposition 6.1\cite{L}). Let $B=(b_{ij})$ be the inverse matrix of $A$. Let $\{e_1^*,\ldots,e_n^*\}$ be the basis defined by $e_i^*=\sum_{k=1}^nb_{ki}e_k$. Then
$$
t_{L/K}(e_ie_j^*)=\delta_{ij}
$$
where $\delta_{ij}$ is the Kronecker delta; that is, $\{e_j^*\}$ is the dual basis to $\{e_i\}$ for the form $t_{L/K}$.

\begin{proposition}\label{Prop4}(\cite[Proposition 6.3.8]{GR})
 Suppose that $(L/K,v)$ is a finite separable  extension of valued fields such that $v$ has rank 1 and is not discrete. Let the basic setup be $(\cO_K,I)$ where $I=\cM_K$.
Let $W_L$ be the integral closure of $\cO_K$ in $L$. Then $W_L$ is an almost finitely presented and uniformly almost finitely generated $\cO_K$-module which admits the uniform bound $n=[L:K]$. Further, $W_L$ is an almost projective $\cO_K$-module. 
\end{proposition}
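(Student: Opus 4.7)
The plan is to construct explicit $\cO_K$-module approximations to $W_L$ by means of the non-degenerate trace pairing $t_{L/K}(x,y) = \mbox{Tr}_{L/K}(xy)$. Given any $K$-basis $\{e_1, \ldots, e_n\}$ of $L$ consisting of elements of $W_L$, the Gram matrix $A = (t_{L/K}(e_i, e_j))$ has entries in $\cO_K$ (since $e_ie_j \in W_L$ is integral, hence has trace in $\cO_K$) and nonzero determinant $d$. The dual basis $\{e_i^*\}$ in $L$ satisfies $d e_i^* \in \sum_j \cO_K e_j$, since if $B = A^{-1}$ then $dB = \mbox{adj}(A) \in M_n(\cO_K)$. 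For $w \in W_L$ we have $w = \sum_i t_{L/K}(we_i) e_i^*$ with $t_{L/K}(we_i) \in \cO_K$, so $W_L \subseteq \sum_i \cO_K e_i^*$. Combining,
\[
\sum_i\cO_K e_i \ \subseteq\ W_L \ \subseteq\ \sum_i\cO_K e_i^* \ \subseteq\ d^{-1}\sum_i \cO_K e_i,
\]
and in particular $d \cdot W_L \subseteq \sum_i \cO_K e_i$.

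For any $\epsilon \in I$ with $v(\epsilon) \geq v(d)$, the element $\epsilon e_i^* = (\epsilon/d) \cdot d e_i^*$ lies in $W_L$, so I may define $u_\epsilon: W_L \to \cO_K^n$ by $u_\epsilon(w) = (t_{L/K}(we_1), \ldots, t_{L/K}(we_n))$ and $v_\epsilon: \cO_K^n \to W_L$ by $v_\epsilon(c_1,\ldots,c_n) = \sum_i c_i \cdot (\epsilon e_i^*)$. A direct computation gives $v_\epsilon \circ u_\epsilon = \epsilon\cdot 1_{W_L}$, so Lemma~\ref{Lemma12} yields the almost projectivity factorization for this $\epsilon$. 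Simultaneously the map $\cO_K^n \to W_L$ sending standard generators to $e_1,\ldots,e_n$ is a homomorphism from a finitely presented $\cO_K$-module with $n$ generators, zero kernel, and cokernel killed by $d$, hence by $\epsilon$.

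The main obstacle is to show that for every $\epsilon \in I$ such a basis with $v(d) \leq v(\epsilon)$ exists; equivalently, that the ideal of $\cO_K$ generated by the discriminants of all bases of $L/K$ contained in $W_L$ contains $I = \cM_K$. This is where the rank 1 non-discrete hypothesis is essential; indeed, the last sentence of Theorem~\ref{GRThm1} reflects the fact that the statement fails when $v$ is discrete, precisely because in the discrete case the discriminant ideal remains bounded away from $0$. The approach I would take is iterative: if some $w \in W_L$ has nonzero class in $W_L/\sum_i \cO_K e_i$ whose annihilator has valuation strictly less than $v(d)$, then replacing a suitable $e_j$ by $w$ produces a new basis with strictly smaller $v(d)$. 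The non-discreteness of $v$ permits these approximations to be made arbitrarily fine, so one eventually obtains a basis with discriminant of valuation $\leq v(\epsilon)$.

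Once this key step is settled, the same trace-form maps deliver uniform almost finite generation with bound $n$ and almost finite presentation (via the finitely presented free module $\cO_K^n$), and almost projectivity by Lemma~\ref{Lemma12}. Alternatively, almost projectivity follows automatically from almost finite presentation by observing that $W_L$ is a torsion-free $\cO_K$-module, hence flat by Lemma~\ref{Lemma3}, hence almost flat, and then applying Proposition~\ref{Prop5}.
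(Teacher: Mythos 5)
Your trace-form setup and the sandwich $\sum_i\cO_Ke_i\subseteq W_L\subseteq d^{-1}\sum_i\cO_Ke_i$ match the opening of the paper's proof, and your closing observation (torsion-free $\Rightarrow$ flat by Lemma \ref{Lemma3} $\Rightarrow$ almost flat, then Proposition \ref{Prop5}) is exactly how the paper deduces almost projectivity. But the step you isolate as the ``main obstacle'' --- that for every $\epsilon\in I$ there is a basis of $L/K$ inside $W_L$ whose discriminant $d$ satisfies $v(d)\le v(\epsilon)$ --- is not merely unproved; it is false in general, so no iterative replacement of basis vectors can establish it. If $f_1,\dots,f_n\in W_L$ is any basis with discriminant $d'$ and $W'=\sum_i\cO_Kf_i$, then $d'\,W'^{*}\subseteq W'$ for the trace dual $W'^{*}$, and since $W'\subseteq W_L\subseteq W_L^{*}\subseteq W'^{*}$, the element $d'$ annihilates $W_L^{*}/W_L$. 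Whenever that module is not almost zero, $v(d')$ is bounded below by a fixed positive constant over \emph{all} bases contained in $W_L$. This happens, e.g., for $K=\Q_p(p^{1/\ell^m}:m\ge1)$ with $\ell\ne p$ prime (rank $1$, non-discrete) and $L=K(p^{1/p})$: there ${\rm Tr}_{L/K}(W_L)=p\,\cO_K$, so $p^{-1}\in W_L^{*}\setminus W_L$ and every basis in $W_L$ has $v(d')\ge 1$ --- yet the Proposition holds for this $L/K$. The discriminant measures ramification (failure of self-duality under the trace form), not failure of finite generation, and the Proposition makes no unramifiedness hypothesis. Your side remark about the discrete case reflects the same confusion: there $W_L$ is honestly finite and free, and what fails in Theorem \ref{GRThm1} is the vanishing of $\Omega$, not finite generation.

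The paper's key step is genuinely weaker than yours: for each $\epsilon\in I$ one needs only a finitely generated (hence, by Lemma \ref{Lemma3} and the sandwich, free of rank $n$) submodule $W'\subseteq W_L$ with $\epsilon W_L\subseteq W'$; no control on the discriminant of $W'$ is required. This is proved by contradiction: if it failed, one could build a strictly increasing chain $W_0\subset W_1\subset\cdots$ of such submodules with $\epsilon W_{i+1}\not\subseteq W_i$, whence $F_0(W_{i+1}/W_i)\subseteq{\rm Ann}(W_{i+1}/W_i)\subseteq\epsilon\cO_K$ for every $i$; multiplicativity of Fitting ideals along the chain, combined with $aW_{k+1}\subseteq\sum_i\cO_Ke_i$, forces $nv(a)\ge kv(\epsilon)$ for all $k$, contradicting the archimedean property of a rank-$1$ value group. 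With such a $W'$ in hand, almost finite presentation and the uniform bound $n$ are immediate, and almost projectivity follows by your own last paragraph. So the trace form is used only to produce the two-sided sandwich; the passage to arbitrary $\epsilon$ must go through a chain/boundedness argument of this kind rather than through discriminants of bases.
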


\begin{proof}  Let $t_{L/K}$ be the trace form of $L/K$ defined above the statement of this proposition. Let $\{e_1,\ldots,e_n\}$ be a basis of the $K$-vector space $L$ such that $e_1,\ldots,e_n\in W_L$. Let $\{e_1^*,\ldots,e_n^*\}$ be the dual basis defined above the statement of this proposition. There exists $a\in \cO_K$ such that $ae_i^*\in W_L$ for all $i$. Let $w\in W_L$. We can write $w=\sum_{i=1}^na_ie_i$ for some $a_i\in K$. Since $wae_j^*\in W_L$, we have that 
$t_{L/K}(wa,e_j^*)=\mbox{Trace}_{L/K}(wae_j^*)\in \cO_K$ for all $j$ (by Theorem 4, Section 3, Chapter IV, page 260 \cite{ZS1}). We also have that $t_{L/K}(wa,e_j^*)=aa_j$. Thus
\begin{equation}\label{eq10}
e_1\cO_K+\cdots+e_n\cO_K\subset W_E\subset a^{-1}(e_1\cO_K+\cdots+e_n\cO_K).
\end{equation}
We can write $W_E$ as the direct limit of the family $\mathcal W$ of all the finitely generated $\cO_K$-submodules of $W_E$ containing $e_1,\ldots,e_n$. If $W_0\in \mathcal W$, then $W_0$ is a free $\cO_K$-module  by Lemma \ref{Lemma3}.
By (\ref{eq10}), $W_0\otimes_{\cO_K}K\cong K^n$. Since $W_0$ is a free $\cO_K$-module of finite rank, we have that $W_0$ is a free $\cO_K$-module of rank $n$.

To show that $W_L$ is an almost finitely presented and uniformly almost finitely generated $\cO_K$-module which admits the uniform bound $n=[L:K]$,  it suffices to prove the following assertion. 
\begin{equation}\label{eq11}
\mbox{Let $\epsilon \in I$. Then there exists $W'\in \mathcal W$ such that $\epsilon W_L\subset W'$.}
\end{equation}
Suppose that the condition of (\ref{eq11}) does not hold for some $\epsilon$. Set $W_0=\sum_{i=1}^ne_i\cO_K$. Since (\ref{eq11}) does not hold, there exists $g_1\in W_L$ such that $\epsilon g_1\not\in W_0$. Let $W_1=W_0+g_1\cO_K$. Again since (\ref{eq11}) does not hold, there exists $g_2\in W_L$ such that $\epsilon g_2\not\in W_1$. Let $W_2=W_1+g_2\cO_K$. Continuing this way, we construct a sequence 
$$
W_0=\sum_{i=1}^ne_i\cO_K\subset W_1\subset \cdots \subset W_m\subset\cdots
$$
indexed by the non negative integers
such that $W_i\in \mathcal W$ for all $i$ and $\epsilon W_{i+1}\not\subset W_i$ for all $i$. By (\ref{eq10}), we have short exact sequences of $\cO_K$-modules
$$
0\rightarrow aW_{k+1}/aW_0\rightarrow (\cO_K)^n/a(\cO_K)^n\rightarrow (\cO_K)^n/aW_{k+1}\rightarrow 0.
$$
The fitting ideals $F_i(M)$ of a module $M$ are defined in  \cite[Appendix D]{K}. By  \cite[Proposition D17]{K},
$$
\begin{array}{l}
F_0((\cO_K)^n/a(\cO_K)^n)= F_0(aW_{k+1}/aW_0)F_0((\cO_K)^n/aW_{k+1})\\\subset F_0(aW_{k+1}/aW_0)=F_0(W_{k+1}/W_0).
\end{array}
$$
Induction on $i$ in the short exact sequences
$$
0\rightarrow W_i/W_0\rightarrow W_{i+1}/W_0\rightarrow W_{i+1}/W_i\rightarrow 0
$$
and \cite[Proposition D17]{K} shows that 
$$
F_0(W_{k+1}/W_0)= \prod_{i=0}^kF_0(W_{i+1}/W_i)
$$
for all $k\ge 0$. We have that
$$
\mbox{Ann}_{\mathcal O_K}(W_{i+1}/W_i)=\{\alpha\in \mathcal O_K\mid \alpha g_{i+1}\in W_i\}\subset \epsilon\mathcal O_K
$$
since $\mbox{Ann}_{\mathcal O_K}(W_{i+1}/W_i)$ is an ideal in the valuation ring $\mathcal O_K$ and $\epsilon g_{i+1}\not\in W_i$.

By  \cite[Proposition D14]{K}, 
$$
F_0(W_{i+1}/W_i)\subset \mbox{Ann}_{\cO_K}(W_{i+1}/W_i)\subset \epsilon \cO_K
$$
for all $i\ge 0$. Thus
$$
a^n\cO_K=F_0((\cO_K)^n/a(\cO_K)^n)\subset \prod_{i=0}^{k-1}F_0(W_{i+1}/W_i)\subset \epsilon^k\cO_K
$$
for all $k\ge 0$, which implies that $nv(a)\ge kv(\epsilon)$ for all $k\ge 0$ which is impossible since $v$ has rank 1. Thus statement (\ref{eq11}) is true. Since each $W'\in \mathcal W$ is a free $\cO_K$-module of rank $n$, $W_L$ is an almost finitely presented and uniformly almost finitely generated $\cO_K$-module. $W_L$ is a flat $\cO_K$-module by Lemma \ref{Lemma3}. Thus $W_L$ is an almost  projective $\cO_K$-module by Proposition   \ref{Prop5}.
\end{proof}

An extension of valued fields $(L/K,v)$ is unibranched if $v|K$ has a unique extension to $L$.

\begin{proposition}\label{Prop3} Suppose that $(L/K,v)$ is a finite separable unibranched extension of valued fields and that $v$ has rank 1 and is nondiscrete.  Let the basic setup be $(\cO_K,I)$ where $I=\cM_K$.
If $\Omega_{\cO_L|\cO_K}$ is almost zero, then $\cO_K\rightarrow \cO_L$ is almost finite \'etale. 
\end{proposition}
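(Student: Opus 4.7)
The plan is to verify the two conditions of Definition \ref{Def2} for $\cO_K\to\cO_L$. Condition (1), that $\cO_L$ is an almost finitely presented and almost projective $\cO_K$-module, is immediate: since $(L/K,v)$ is unibranched, the valuation ring $\cO_L$ coincides with the integral closure $W_L$ of $\cO_K$ in $L$, so Proposition \ref{Prop4} directly supplies almost finite presentation and almost projectivity over $\cO_K$.

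For condition (2), the almost unramifiedness of $\cO_K\to\cO_L$, I plan to apply Proposition \ref{Prop6}, so that it suffices to produce an idempotent $e\in S_*$ with $\mu_*(e)=1$ and $e\cdot (\ker\mu)_*=0$, where $S=\cO_L\otimes_{\cO_K}\cO_L$ and $\mu=\mu_{\cO_L|\cO_K}$. Since $\cO_L$ is $\cO_K$-flat (Lemmas \ref{Lemma3} and \ref{Lemma50}), Lemma \ref{Lemma1} embeds $S\hookrightarrow L\otimes_K L$, and Lemma \ref{Lemma7} identifies $S_*$ with $\{m\in L\otimes_K L\mid \cM_K\cdot m\subset S\}$. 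Because $L/K$ is finite separable, $L\otimes_K L$ splits as a finite product of fields and admits the standard diagonal idempotent $e_L\in L\otimes_K L$ with $e_L^2=e_L$, $\mu_L(e_L)=1$, and $e_L\cdot \ker(\mu_L)=0$. The required conditions in $S_*$ then follow from their counterparts in $L\otimes_K L$, and the task reduces to showing that $e_L\in S_*$.

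To show $e_L\in S_*$, I will pick a primitive element $\alpha\in\cO_L$ for $L/K$ with monic minimal polynomial $f\in\cO_K[x]$ (obtained by Gauss's lemma for the valuation ring $\cO_K$, applied to the integrality of $\alpha$). Under the isomorphism $L\otimes_K L\cong L[x]/(f(x))$ sending $1\otimes\alpha\mapsto x$, the diagonal idempotent admits the explicit formula $e_L=g(x)/f'(\alpha)$, where $g(x)=f(x)/(x-\alpha)\in\cO_L[x]$ is monic of degree $n-1$. In particular $f'(\alpha)\cdot e_L\in\cO_L[x]/(f(x))=\cO_L\otimes_{\cO_K}\cO_K[\alpha]\subseteq S$, so $e_L\in S_*$ will follow once $f'(\alpha)$ is shown to be a unit in $\cO_L$. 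For this I will use the classical conductor inclusion $f'(\alpha)\cO_L\subseteq \cO_K[\alpha]$ together with the second fundamental exact sequence comparing $\Omega_{\cO_K[\alpha]|\cO_K}\cong \cO_K[\alpha]/(f'(\alpha))$ with $\Omega_{\cO_L|\cO_K}$: the hypothesis that $\Omega_{\cO_L|\cO_K}$ is almost zero yields an inequality $v(f'(\alpha))\leq v(\epsilon)$ for every $\epsilon\in \cM_K$, and because $v$ is rank one and nondiscrete, $\cM_K$ contains elements of arbitrarily small positive valuation, forcing $v(f'(\alpha))=0$ and hence $f'(\alpha)\in\cO_L^\times$. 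Proposition \ref{Prop6} then delivers the almost unramifiedness and completes the proof.

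The main obstacle is the last transfer step: translating the almost vanishing of the full differential module $\Omega_{\cO_L|\cO_K}$ into the unit-ness of $f'(\alpha)$ when $\cO_K[\alpha]$ is possibly a proper subring of $\cO_L$. The argument hinges on combining the conductor containment $f'(\alpha)\cO_L\subseteq\cO_K[\alpha]$ with the rank 1 nondiscrete hypothesis on $v$, which together rule out any positive valuation for $f'(\alpha)$ and thus force $e_L$ into $S_*$.
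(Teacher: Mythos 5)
Your verification of condition (1) of Definition \ref{Def2} is fine and agrees with the paper (Proposition \ref{Prop4} plus Proposition \ref{Prop5}). The gap is in condition (2), at the step where you claim that $f'(\alpha)$ is a unit in $\cO_L$. That claim is false in general, and in fact it cannot be true in any interesting case: if $f'(\alpha)$ were a unit then $\cO_K[\alpha]=\cO_K[x]/(f)$ would be a standard \'etale $\cO_K$-algebra, hence integrally closed, hence equal to $\cO_L$, and $\cO_K\rightarrow\cO_L$ would be honestly finite \'etale --- so in particular unramified with $e(L|K)=1$. But the whole point of the almost theory is that $\Omega_{\cO_L|\cO_K}$ can be (almost) zero for genuinely ramified extensions. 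Concretely, let $K$ be the rank $1$ valued field $\Q_p(p^{1/p^{\infty}})$ (deeply ramified, with value group $\Z[1/p]\,v(p)$) and $L=K(p^{1/\ell})$ for a prime $\ell\ne p$. Then $\Omega_{\cO_L|\cO_K}=0$ by Theorems \ref{GRThm2*} and \ref{Theorem4*}, yet $e(L|K)=\ell>1$, so no primitive element $\alpha\in\cO_L$ can have $f'(\alpha)\in\cO_L^{\times}$; for $\alpha=p^{1/\ell}$ one computes $v(f'(\alpha))=\frac{\ell-1}{\ell}v(p)>0$. The step where you derive $v(f'(\alpha))\le v(\epsilon)$ from the almost vanishing of $\Omega_{\cO_L|\cO_K}$ does not work because the comparison map $\Omega_{\cO_K[\alpha]|\cO_K}\otimes_{\cO_K[\alpha]}\cO_L\rightarrow\Omega_{\cO_L|\cO_K}$ in the first fundamental exact sequence is far from injective (in the example above its source is $\cO_L/f'(\alpha)\cO_L\ne 0$ while its target is $0$), so smallness of the target gives no control on $f'(\alpha)$.

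The correct version of your idea is not to exhibit the diagonal idempotent $e_L\in L\otimes_KL$ with a single unit denominator, but to show directly that $\epsilon\, e_L\in \cO_L\otimes_{\cO_K}\cO_L$ for every $\epsilon\in\cM_K$, and this requires spreading the almost vanishing hypothesis over a directed family of finitely generated subalgebras rather than over one monogenic subring. This is what the paper does: for each $\epsilon$ it produces a finite free $\cO_K$-subalgebra $B_{\epsilon}\subset\cO_L$ with $\epsilon\cO_L\subset B_{\epsilon}$ (from the proof of Proposition \ref{Prop4}), shows that the kernel $J_{\epsilon}$ of the multiplication on $B_{\epsilon}\otimes_{\cO_K}B_{\epsilon}$ is generated by at most $n$ elements and satisfies $\epsilon^{5}J_{\epsilon}\subset J_{\epsilon}^{2}$ (this is where the hypothesis on $\Omega_{\cO_L|\cO_K}$ enters), and then uses a determinant trick to manufacture approximate idempotents $e_{\epsilon}$ with $\mu(e_{\epsilon})=\epsilon^{5n}$, $e_{\epsilon}^{2}=\epsilon^{5n}e_{\epsilon}$ and $e_{\epsilon}J_{*}=0$; these glue to an element $e\in(\cO_L\otimes_{\cO_K}\cO_L)_{*}$ satisfying the hypotheses of Proposition \ref{Prop6}. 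Your argument as written would prove the much stronger (and false) statement that every such extension is finite \'etale, so it needs to be replaced by an $\epsilon$-by-$\epsilon$ construction of this kind.
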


\begin{proof} Let $A=\cO_K$. The integral closure $W_L$ of $A$ in $L$ is $\mathcal O_L$ since $L/K$ is unibranched. 
By Proposition \ref{Prop4}, $B=\cO_L$ is a uniformly almost finitely generated $A$-module which admits the uniform bound $n=[L:K]$. 

Given $\epsilon\in I$, let $W'$  be the finitely generated $A$-submodule of $B$ such that $\epsilon B\subset W'$ of (\ref{eq11}) of the proof of Proposition \ref{Prop4}. Let $B_{\epsilon}$ be the $A$-algebra generated by $W'$. $B_{\epsilon}$ is generated as an $A$-algebra by finitely many elements $g_1,\ldots,g_n\in B$, which are thus integral over $A$. Thus $B_{\epsilon}$ is a finitely generated $A$-module, so that $B_{\epsilon}$ is a free $A$-module by Lemma \ref{Lemma3}.  Since $B_{\epsilon}\otimes_{A}K\cong K^n$, we have that $B_{\epsilon}\cong A^n$ as an $A$-module. Further, $\epsilon B\subset B_{\epsilon}$. 
We have that $B\otimes_AB$ is a torsion free $A$-module since $A\rightarrow B$ is flat. We further have natural inclusions
$$
B\otimes_AB\subset (B\otimes_AB)_*\subset L\otimes_KL
$$
by Lemmas \ref{Lemma1} and \ref{Lemma7}.

We have a commutative diagram
$$
\begin{array}{ccccccccc}
0&\rightarrow&J_{\epsilon}&\rightarrow &B_{\epsilon}\otimes_{A}B_{\epsilon}&\stackrel{u_{\epsilon}}{\rightarrow}&
B_{\epsilon}&\rightarrow & 0\\
&&\downarrow&&\downarrow&&\downarrow&&\\
0&\rightarrow&J&\rightarrow &B\otimes_{A}B&\stackrel{u}{\rightarrow}&
B&\rightarrow & 0\\
\end{array}
$$
where $u_{\epsilon}$ and $u$ are the multiplication homomorphisms, the rows are short exact and the vertical arrows are injections (since both $B_{\epsilon}\otimes_A B_{\epsilon}\rightarrow L\otimes_KL$ and $B\otimes_AB\rightarrow L\otimes_KL$ are injections by Lemma \ref{Lemma1}).

The fact that $\epsilon B\subset B_{\epsilon}$ implies $\epsilon^2(B\otimes_AB)\subset B_{\epsilon}\otimes_AB_{\epsilon}$ and $\epsilon^2J\subset B_{\epsilon}\otimes_AB_{\epsilon}$. The fact that $u(J)=0$ implies $\epsilon^2J\subset J_{\epsilon}$ which implies $\epsilon^4 J^2\subset J_{\epsilon}^2$.  $\epsilon\Omega_{B|A}=0$ where $\Omega_{B|A}= J/J^2$ implies $\epsilon J\subset J^2$. Thus
$\epsilon^5 J_{\epsilon}\subset \epsilon^5J\subset \epsilon^4J^2\subset J_{\epsilon}^2$.

We have that $J_{\epsilon}$ is a finitely generated ideal, which is generated by at most $n$ elements by  \cite[Lemma 8.1.4]{F}, since $B_{\epsilon}$ is generated by $\le n$ elements as an $A$-algebra. 
So there exist $f_1,\ldots,f_n\in B_{\epsilon}\otimes_AB_{\epsilon}$ such that $J_{\epsilon}=(f_1,\ldots,f_n)$. Let $\lambda=\epsilon^5\otimes 1\in B_{\epsilon}\otimes_AB_{\epsilon}$. 
Since $\lambda J_{\epsilon}\subset J_{\epsilon}^2$, we have that 
for $1\le i\le n$,  $\lambda f_i=\sum_{j=1}^n a_{ij}f_j$ with $a_{ij}\in J_{\epsilon}$. Let $b_{ij}=\delta_{ij}\lambda-a_{ij}$ where $\delta_{ij}$ is the Kronecker delta. Then $\mbox{Det}(b_{ij})\in B_{\epsilon}\otimes_AB_{\epsilon}$ and $\mbox{Det}(b_{ij})=\lambda^n+c_1\lambda^{n-1}+\cdots+c_n$ where $c_i\in J_{\epsilon}$.
For $1\le i\le n$, we have that $\sum_{j=1}^n(\delta_{ij}\lambda-a_{ij})f_j=0$. Multiplying the matrix $(b_{ij})$ on the left by its adjoint, we obtain that $\mbox{Det}(b_{ij})f_k=0$ for $1\le k\le n$ and so $\mbox{Det}(b_{ij})$ annihilates $J_{\epsilon}$.

 Let $e_{\epsilon}=\mbox{Det}(b_{ij})$. We have that 
$$
u_{\epsilon}(e_{\epsilon})=\epsilon^{5n}.
$$
Further,
 $$
 e_{\epsilon}^2= e_{\epsilon}(\lambda^n+c_1\lambda^{n-1}+\cdots+c_n)=\lambda^ne_{\epsilon}=\epsilon^{5n}e_{\epsilon}.
 $$
 For $f\in J_*$ we have that $\epsilon f\in J$ and thus $\epsilon^3f\in J_{\epsilon}$ so that $\epsilon^3e_{\epsilon}f=0$ which implies $e_{\epsilon}f=0$ since $L\otimes_KL$ is $I$-torsion free, as $L$ is $I$-torsion free and $L$ is a flat $K$-module. Thus 
 $$
 e_{\epsilon}J_*=0.
 $$
 Now for $\delta$ and $\epsilon$ in $I$, $\delta^{5n}\otimes 1 - e_{\delta}$ and $\epsilon^{5n}\otimes 1 -e_{\epsilon}\in J$ 
 which implies that 
 $$
 (\delta^{5n}\otimes 1-e_{\delta})e_{\epsilon}=0=
 (\epsilon^{5n}\otimes 1-e_{\epsilon})e_{\delta}
 $$
 and so 
 $$
 \delta^{5n}e_{\epsilon}=\epsilon^{5n}e_{\delta}
 $$
 for all $\delta,\epsilon\in I$. Let $e \in L\otimes_KL$ be the element such that 
 $$
 e=\frac{1}{\epsilon^{5n}}e_{\epsilon}
 $$
 for all $0\ne \epsilon\in I$. By our calculations for $e_{\epsilon}$,
 $$
 e^2=\left(\frac{1}{\epsilon^{5n}}e_{\epsilon}\right)^2
 =\left(\frac{1}{\epsilon^{5n}}\right)^2e_{\epsilon}^2=\left(\frac{1}{\epsilon^{5n}}\right)^2\epsilon^{5n}e_{\epsilon}=\frac{1}{\epsilon^{5n}}e_{\epsilon}=e,
 $$
 $$
 u(e)=u\left(\frac{1}{\epsilon^{5n}}e_{\epsilon}\right)=\frac{1}{\epsilon^{5n}}u(e_{\epsilon})=1,
 $$
 and 
 $$
eJ_*= \frac{1}{\epsilon^{5n}}e_{\epsilon}J_*=0.
 $$
 Suppose $\alpha\in I$. Then there exist $\epsilon\in I$ such that $v(\epsilon^{5n})<v(\alpha)$. Thus 
 $$
 \alpha e=\frac{\alpha}{\epsilon^5}e_{\epsilon}\in B_{\epsilon}\otimes_AB_{\epsilon}\subset B\otimes_AB
 $$
 which implies that $e\in (B\otimes_AB)_*$ by Lemma \ref{Lemma7}. Thus $\cO_K\rightarrow \cO_L$ is almost unramified by Proposition \ref{Prop6}. By our construction of the $B_{\epsilon}$ and Proposition \ref{Prop5}, $\cO_K\rightarrow \cO_L$ is almost finite projective. Thus $\cO_K\rightarrow\cO_L$ is almost finite \'etale.
\end{proof}

We restate Theorem \ref{Theorem3'} of the introduction here for the reader's convenience. 

\begin{theorem}\label{Theorem3} Suppose that $(L/K,v)$ is a finite separable unibranched extension of valued fields and that $v$ has rank 1 and is nondiscrete.  Let the basic setup be $(\cO_K,I)$ where $I=\cM_K$. Then $\cO_K\rightarrow \cO_L$ is almost finite \'etale if and only if $\Omega_{\cO_L|\cO_K}$ is almost zero.
\end{theorem}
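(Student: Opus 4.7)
The plan is to observe that Theorem \ref{Theorem3} is the biconditional whose forward (``$\Omega$ almost zero implies almost finite \'etale'') direction is already established as Proposition \ref{Prop3}. Thus the work remaining is only the converse, which requires stringing together two of the lemmas already proved in Section \ref{Section2}.

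First I would assume that $\cO_K \to \cO_L$ is almost finite \'etale. By condition 2) of Definition \ref{Def2} (``almost unramifiedness''), $\cO_L$ is an almost projective module over $\cO_L \otimes_{\cO_K} \cO_L$ via the multiplication map $\mu_{\cO_L|\cO_K}$. Note that this hypothesis does not use condition 1) (almost finite projectiveness of $\cO_L$ over $\cO_K$); only the unramifiedness portion of the definition of almost finite \'etale is needed to control $\Omega_{\cO_L|\cO_K}$.

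Next I would apply Lemma \ref{Lemma5}, which states that any almost projective module over an $R$-algebra is automatically almost flat. Applying this with $A = \cO_L \otimes_{\cO_K} \cO_L$ and $M = \cO_L$, I conclude that $\cO_L$ is an almost flat $\cO_L \otimes_{\cO_K} \cO_L$-module under the multiplication map.

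Finally, Lemma \ref{Lemma4} applies verbatim: whenever $A \to B$ is a homomorphism of $R$-algebras such that the multiplication $B \otimes_A B \to B$ makes $B$ an almost flat $B \otimes_A B$-module, then $\Omega_{B|A}$ is almost zero. Specializing to $A = \cO_K$, $B = \cO_L$ yields the desired conclusion that $\Omega_{\cO_L|\cO_K}$ is almost zero, completing the converse and hence the biconditional. There is no real obstacle here, since the difficult direction (building the idempotent implicit in almost unramifiedness from the vanishing of $\Omega$) has already been carried out in Proposition \ref{Prop3} through the careful finitely generated approximation argument; the converse is essentially formal from the definitions and Lemmas \ref{Lemma4} and \ref{Lemma5}.
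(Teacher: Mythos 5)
Your proposal is correct and is precisely the paper's own argument: the forward direction is Proposition \ref{Prop3}, and the converse is the chain Definition \ref{Def2}(2) $\Rightarrow$ Lemma \ref{Lemma5} $\Rightarrow$ Lemma \ref{Lemma4}. Nothing further is needed.
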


\begin{proof} This follows from Lemmas \ref{Lemma4} and \ref{Lemma5} and by 
Proposition \ref{Prop3}.
\end{proof}

\begin{lemma}\label{Lemma9} Suppose that $K$ is a valued field and $K^h$ is its henselization. Then 
the multiplication map $\cO_{K^h}\otimes_{\cO_K}\cO_{K^h}\rightarrow \cO_{K^h}$ is flat.
\end{lemma}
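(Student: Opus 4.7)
The plan is to exploit the presentation of $\cO_{K^h}$ as a filtered colimit of essentially \'etale local $\cO_K$-algebras. Via (\ref{eq40}), I can write $\cO_{K^h}=\varinjlim_{i\in I}B_i$ where each $\cO_K\to B_i$ is local essentially \'etale and $I$ is a filtered index set, this being the standard construction of the henselization. Set $S=\cO_{K^h}\otimes_{\cO_K}\cO_{K^h}$ and $S_i=B_i\otimes_{\cO_K}B_i$; since filtered colimits commute with tensor products, $S=\varinjlim_i S_i$. The overarching strategy is to exhibit $\cO_{K^h}$ as a filtered colimit of flat $S$-modules.

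First I would verify that for each $i$ the multiplication $\mu_i\colon S_i\to B_i$ makes $B_i$ a flat $S_i$-module. For a finitely presented \'etale extension this is standard: unramifiedness gives $J_i=J_i^2$ for $J_i=\ker\mu_i$, and together with finite generation of $J_i$ a determinant-trick (essentially the argument used in Proposition \ref{Prop3} above) produces an idempotent $e_i\in S_i$ with $J_i=(1-e_i)S_i$, so $B_i\cong e_iS_i$ is $S_i$-projective. For an essentially \'etale extension, which factors as an \'etale map followed by a localization, I combine this with the trivial case $A\to B$ a localization (where $B\otimes_AB=B$ and the multiplication is flat tautologically), using transitivity of flatness of the multiplication along composites (the ordinary version of Lemma \ref{Lemma8}). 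Then, from the natural identification $S\cong \cO_{K^h}\otimes_{B_i}S_i\otimes_{B_i}\cO_{K^h}$ I obtain the base-change formula
$$
B_i\otimes_{S_i}S\;\cong\;\cO_{K^h}\otimes_{B_i}\cO_{K^h},
$$
which is therefore flat as an $S$-module because $B_i$ is flat over $S_i$.

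For $j\le k$ in $I$ the maps $B_j\to B_k$ induce surjective $S$-linear maps $\cO_{K^h}\otimes_{B_j}\cO_{K^h}\twoheadrightarrow\cO_{K^h}\otimes_{B_k}\cO_{K^h}$, since tensoring over a larger ring merely imposes more relations. Taking the filtered colimit,
$$
\varinjlim_i\bigl(\cO_{K^h}\otimes_{B_i}\cO_{K^h}\bigr)\;\cong\;\cO_{K^h}\otimes_{\cO_{K^h}}\cO_{K^h}\;\cong\;\cO_{K^h},
$$
and unwinding the definitions the resulting $S$-module structure on $\cO_{K^h}$ is precisely the one given by the multiplication map $\mu$. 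A filtered colimit of flat $S$-modules is flat, so $\cO_{K^h}$ is flat over $S$, which is the conclusion of the lemma. The point I expect to require the most care is the bookkeeping in this last step: checking that each identification is $S$-linear (not merely linear over the intermediate $S_i$) and that the transition morphisms are compatible with the $S$-module structures. Once this is pinned down, the argument is a clean instance of the general principle that ind-\'etale extensions are weakly \'etale.
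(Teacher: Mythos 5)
Your proof is correct, and while it rests on the same underlying principle as the paper's (an ind-\'etale extension is weakly \'etale), it differs in two mechanical respects. The paper takes the colimit first at the level of the \emph{unlocalized} \'etale algebras $A_i$ from Raynaud's presentation, invoking Bourbaki (Prop.~9, Ch.~I, \S 2, no.~7) to conclude that $A=\lim_{\rightarrow}A_i$ is flat over $\lim_{\rightarrow}(A_i\otimes_{\cO_K}A_i)\cong A\otimes_{\cO_K}A$, and only then performs a single localization $A\to A_m$, glued in by the ordinary analogue of Lemma~\ref{Lemma8}. You instead localize at each finite stage, applying that same transitivity to each $\cO_K\to A_i\to B_i=(A_i)_{m_i}$, and then replace Bourbaki's statement about flatness over a direct limit of rings by a base-change argument: the cartesian-square identity $\cO_{K^h}\otimes_{B_i}\cO_{K^h}\cong B_i\otimes_{S_i}S$ (the same EGA I.5.3.5 identity the paper uses in Lemma~\ref{Lemma8}) exhibits $\cO_{K^h}$ as a filtered colimit of flat modules over the single fixed ring $S=\cO_{K^h}\otimes_{\cO_K}\cO_{K^h}$. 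Your route keeps the limit step over one ring, at the cost of the $S$-linearity bookkeeping you flag at the end; that check is in fact immediate, since each $\cO_{K^h}\otimes_{B_i}\cO_{K^h}$ is the quotient $S$-algebra $S/I_i$ with $I_i$ generated by $b\otimes 1-1\otimes b$ for $b\in B_i$, the transition maps are the quotient maps $S/I_j\twoheadrightarrow S/I_k$, and $\lim_{\rightarrow}S/I_i=S/\bigcup I_i\cong\cO_{K^h}$ carries exactly the multiplication module structure. Both arguments are complete; the paper's avoids the base-change step, yours avoids the appeal to flatness over a colimit of rings.
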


\begin{proof} By Theorem 1, page 87 \cite{Ra}, there exist \'etale extensions $\cO_K\rightarrow A_i$ and maximal ideals $m_i$ of $A_i$ such that the henselization of $\cO_K$ is $(\cO_K)^h=\displaystyle\lim_{\rightarrow} (A_i)_{m_i}$. By Equation (\ref{eq40}), $\cO_{K^h}=(\cO_K)^h$. Let $A=\displaystyle\lim_{\rightarrow} A_i$, $m=\displaystyle\lim_{\rightarrow}m_i$ so that $(\cO_K)^h=A_m$. Now each extension $\cO_K\rightarrow A_i$ being \'etale means that each $A_i$ is a finitely presented $\cO_K$-algebra, $A_i$ is a flat $\cO_K$-module and $\Omega_{A_i|\cO_K}=0$ (\cite[Corollary 17.6.2 and Theorem 17.4.2]{EGA4}). Thus the multiplication $A_i\otimes_{\cO_K}A_i\rightarrow A_i$ makes $A_i$ a projective $A_i\otimes_{\cO_K}A_i$-module (\cite[Proposition 4.1.2 and Theorem 8.3.6]{F}). We then conclude that $A_i$ is a flat $A_i\otimes_{\cO_K}A_i$-module (for instance by the analog of Lemma \ref{Lemma5} in ordinary mathematics, e.g.  \cite[Corollary 3.46]{R}).
 By Proposition 9, Chapter I, Section 2, no 7
\cite{BouC}, we have that 
$A=\displaystyle\lim_{\rightarrow} A_i$ is a flat $\displaystyle\lim_{\rightarrow} (A_i\otimes_{\cO_K}A_i)$-module. Now
$$
\lim_{\rightarrow}(A_i\otimes_{\cO_K}A_i)\cong (\lim_{\rightarrow} A_i)\otimes_{\cO_K}(\lim_{\rightarrow} A_i)
$$
by Proposition 7, Chapter II, Section 6.3, page 204   \cite{Bou}. 
Thus $A$ is a flat $A\otimes_{\cO_K}A$-module. 

We have that $A_m\otimes_AA_m\cong A_m$ (since $\otimes_AA_m$ is just localization by $A\setminus m$) so that the multiplication $A_m\otimes_AA_m\rightarrow A_m$ is an isomorphism, so in particular is flat.  By the analog of  Lemma \ref{Lemma8} in ordinary mathematics ((ii) of Lemma 1.2 \cite{F2}) applied to the composition 
$\cO_K\rightarrow A\rightarrow A_m$ with the basic setup $I=\cO_K$, we conclude that $A_m\otimes_{\cO_K}A_m\rightarrow A_m$ is flat; that is,  $\cO_{K^h}\otimes_{\cO_K}\cO_{K^h}\rightarrow \cO_{K^h}$ is flat.
\end{proof}

We will require the following Proposition in the proof of Theorem \ref{Theorem4'}

\begin{proposition}\label{PropN2} Suppose that $(K,v)$ is a valued field where $v$ is nondiscrete of rank 1. Let $K\rightarrow L$ be an Artin-Schreier or Kummer extension. Identify $v$ with an extension of $v$ to $L$. Then $\Omega_{\mathcal O_L|\mathcal O_K}$ is zero if $\Omega_{\mathcal O_L|\mathcal O_K}$ is almost zero.
\end{proposition}

\begin{proof} By the proof of Theorem \ref{Theorem1*}, it suffices to  show that $\Omega_{\cO_L|\cO_K}$  is zero if $\Omega_{\mathcal O_L|\mathcal O_K}$ is almost zero in the cases of Proposition \ref{PropN1} and Theorems \ref{TheoremN1} - \ref{TheoremN5} of Section \ref{SectionN}.

The most difficult case is that of Theorem \ref{TheoremN1}. Suppose that we are in that situation.
We have that $\Omega_{\cO_L|\cO_K}=I_r/I_r^p$ where $I_r=\{f\in \cO_L\mid v(f)>c\}$ for some $c\ge 0$.
Suppose $c>0$. Then there exist $x\in I_r$ such that $v(x)<\frac{3}{2}c$ and $\epsilon\in I$ such that $v(\epsilon)<(p-2+\frac{1}{2})c$. Thus 
$$
\epsilon x\not\in \{f\in \cO_L\mid v(f)>pc\}=I_r^p,
$$
and so $I_r/I_r^p$ is not almost zero. Thus if $\Omega_{\cO_L|\cO_K}$ is almost zero, then $c=0$ so $\Omega_{\cO_L|\cO_K}=I/I^p$. Suppose $f\in I$. Then $f=\alpha\beta^p$ for some $\alpha,\beta\in I$, so $f\in I^p$. Thus $\Omega_{\cO_L|\cO_K}=0$.

The remaining cases are proven in a similar way, using the explicit structure of $\Omega_{\cO_L|\cO_K}$ given in these theorems. 

\end{proof}

The following  theorem gives a simpler proof of   \cite[Proposition 6.6.2]{GR}, stated as Theorem \ref{GRThm1} in the introduction.  Our proof of 2) implies 3) is of a different nature than the proof in \cite{GR}. They use their theory of the different ideal $\mathcal D_{B/A}$ of an almost finite projective morphism to prove this direction.

\begin{theorem}\label{Theorem4'}(\cite[Proposition 6.6.2]{GR}) Suppose that $(K,v)$ is a valued field, where $v$ is nondiscrete of rank 1. Let the basic setup be $(\cO_K,I)$ where $I=\cM_K$.  Identify $v$ with an extension of $v$ to a separable closure $K^{\rm sep}$ of $K$. Then the following are equivalent.
\begin{enumerate}
\item[1)] $\Omega_{\cO_{K^{\rm sep}}|\cO_K}$ is zero.
\item[2)] $\Omega_{\cO_{K^{\rm sep}}|\cO_K}$ is almost zero.
\item[3)] $\cO_{K}\rightarrow \cO_{K^{\rm sep}}$ is weakly \'etale.
\end{enumerate}
\end{theorem}

\begin{proof} 

First suppose that $\mathcal O_K\rightarrow \mathcal O_{K^{\rm sep}}$ is weakly \'etale. By Lemma \ref{Lemma4}, $\Omega_{\cO_{K^{\rm sep}}|\cO_K}$ is almost zero.

Now suppose that $\Omega_{\cO_{K^{\rm sep}}|\cO_K}$ is almost zero.

Let $K^h$ be the henselization of $K$. Then $\Omega_{\cO_{K^{\rm sep}}|\cO_K}\cong\Omega_{\cO_{K^{\rm sep}}|\cO_K^h}$ by  \cite[Proposition 5.10]{CK}.  By Lemma \ref{Lemma9}, 
the multiplication $\cO_{K^h}\otimes_{\cO_K}\cO_{K^h}\rightarrow \cO_{K^h}$ is flat.

  By Lemma \ref{Lemma8}, it suffices to prove that the multiplication $\cO_{K^{\rm sep}}\otimes_{\cO_{K^h}}\cO_{K^{\rm sep}}\rightarrow \cO_{K^{\rm sep}}$
  is almost flat to conclude that $\mathcal O_K\rightarrow \mathcal O_{K^{\rm sep}}$ is weakly \'etale. Thus we may assume that $K$ is henselian.

We have that
$\cO_{K^{\rm sep}}=\displaystyle\lim_{\rightarrow}\cO_L$ where the limit is over the set $S$ of finite Galois subextensions $L/K$ of $K^{\rm sep}$. By Equation (\ref{eq41}), 
\begin{equation}\label{eq27}
\Omega_{\cO_{K^{\rm sep}}|\cO_K}\cong \lim_{\rightarrow}\left(\Omega_{\cO_L|\cO_K}\otimes_{\cO_L}\cO_{K^{\rm sep}}\right).
\end{equation}
where the limit over $L\in S$.

 Suppose that $L/K$ is a finite Galois extension with $L\subset K^{\rm sep}$. We will show that
\begin{equation}\label{eq26}
\Omega_{\cO_L|\cO_K}\otimes_{\cO_L}\cO_{K^{\rm sep}} \rightarrow \Omega_{\cO_{K^{\rm sep}}|\cO_K}
\end{equation}
is an injection. Suppose not. Then there exists a Galois extension $M/K$ such that $L$ is a subextension and
$$
\Omega_{\mathcal O_L|\mathcal O_K}\otimes_{\mathcal O_K}\mathcal O_{K^{\rm sep}}
\rightarrow
\Omega_{\mathcal O_M|\mathcal O_K}\otimes_{\mathcal O_K}\mathcal O_{K^{\rm sep}}
$$ 
is not an injection. Since extensions of valuation rings are faithfully flat, 
$$
\Omega_{\mathcal O_L|\mathcal O_K}\otimes_{\mathcal O_K}\mathcal O_M\rightarrow \Omega_{\mathcal O_M|\mathcal O_K}
$$
 is not an injection. But this is a contradiction to Theorem \ref{Theorem4*}.

 Since $\cO_{L}$ is a faithfully flat $\cO_K$ module for all $L\in S$,  we have that $\Omega_{\cO_L|\cO_K}$ is almost zero for all $L\in S$. By Proposition \ref{Prop3} or Theorem \ref{Theorem3}, $\cO_K\rightarrow \cO_L$ is almost \'etale for all $L\in S$ (as $K \rightarrow L$ is unibranched since $K$ is henselian). Thus $\cO_L$ is an almost projective $\cO_L\otimes_{\cO_K}\cO_L$-module, so that 
$\cO_L$ is an almost flat $\cO_L\otimes_{\cO_K}\cO_L$-module by Lemma \ref{Lemma5}. We will now prove that $\cO_K\rightarrow \cO_{K^{\rm sep}}$ is weakly \'etale.
Let
$$
0\rightarrow U\rightarrow V
$$
be an injection of $\cO_{K^{\rm sep}}\otimes_{\cO_K}\cO_{K^{\rm sep}}$-modules. We must show that the kernel $\rm{Ker}$ of the $\cO_{K^{\rm sep}}$-module homomorphism 
$$
U\otimes_{(\cO_{K^{\rm sep}}\otimes_{\cO_K}\cO_{K^{\rm sep}})}\cO_{K^{\rm sep}}
\rightarrow
V\otimes_{(\cO_{K^{\rm sep}}\otimes_{\cO_K}\cO_{K^{\rm sep}})}\cO_{K^{\rm sep}}
$$
is almost zero. For $L\in S$, we may regard $U$ and $V$ as $\cO_L\otimes_{\cO_K}\cO_L$-modules. Let $\rm{Ker}_L$ be the kernel of the $\cO_L$-module homomorphism
$$
U\otimes_{(\cO_{L}\otimes_{\cO_K}\cO_{L})}\cO_{L}
\rightarrow
V\otimes_{(\cO_{L}\otimes_{\cO_K}\cO_{L})}\cO_{L},
$$
so for $L,M\in S$ with $L\subset M$, we have commutative diagams
$$
\begin{array}{ccccccc}
0&\rightarrow &{\rm Ker}_L&\rightarrow & U\otimes_{(\cO_{L}\otimes_{\cO_K}\cO_{L})}\cO_{L} &\rightarrow&
V\otimes_{(\cO_{L}\otimes_{\cO_K}\cO_{L})}\cO_{L}\\
&&\downarrow&&\downarrow&&\downarrow\\
0&\rightarrow &{\rm Ker}_M&\rightarrow & U\otimes_{(\cO_{M}\otimes_{\cO_K}\cO_{M})}\cO_{M} &\rightarrow&
V\otimes_{(\cO_{M}\otimes_{\cO_K}\cO_{M})}\cO_{M}
\end{array}
$$
where the rows are exact. By  \cite[Theorem A2]{Mat}, we have an exact sequence
\begin{equation}\label{eq12}
0\rightarrow \lim_{\rightarrow}\rm{Ker}_L\rightarrow \lim_{\rightarrow}\left(U\otimes_{(\cO_{L}\otimes_{\cO_K}\cO_{L})}\cO_{L}\right)\rightarrow \lim_{\rightarrow}
\left(V\otimes_{(\cO_{L}\otimes_{\cO_K}\cO_{L})}\cO_{L}\right)
\end{equation}

where the limits are over $L\in S$. Now $\displaystyle\lim_{\rightarrow}\cO_L=\cO_{K^{\rm sep}}$ and 
$$
\lim_{\rightarrow}\left(\cO_L\otimes\cO_L\right)\cong \left(\lim_{\rightarrow}\cO_L\right)\otimes_{\cO_K}\left(\lim_{\rightarrow}\cO_L\right)=\cO_{K^{\rm sep}}\otimes_{\cO_K}\cO_{K^{\rm sep}}
$$
by Proposition 7 of Chapter II, Section 6 no. 3 \cite{Bou}.  Applying this proposition again to (\ref{eq12}), we have the exact sequence
$$
0\rightarrow \lim_{\rightarrow} \rm{Ker}_L\rightarrow U\otimes_{\displaystyle\lim_{\rightarrow}\left(\cO_L\otimes_{\cO_K}\cO_L\right)}\left(\lim_{\rightarrow}\cO_L\right)\rightarrow
V\otimes_{\displaystyle\lim_{\rightarrow}\left(\cO_L\otimes_{\cO_K}\cO_L\right)}\left(\lim_{\rightarrow}\cO_L\right)
$$
giving the exact sequence
$$
0\rightarrow \lim_{\rightarrow}\rm{Ker}_L\rightarrow 
U\otimes_{(\cO_{K^{\rm sep}}\otimes_{\cO_K}\cO_{K^{\rm sep}})}\cO_{K^{\rm sep}}
\rightarrow
V\otimes_{(\cO_{K^{\rm sep}}\otimes_{\cO_K}\cO_{K^{\rm sep}})}\cO_{K^{\rm sep}}
$$
so that $\rm{Ker}=\lim_{\rightarrow}\rm{Ker}_L$. Since $\rm{Ker}_L$ are all almost zero, 
$\rm{Ker}$ is almost zero. Thus $\cO_{K^{\rm sep}}$ is an almost flat $\cO_{K^{\rm sep}}\otimes_{\cO_K}\cO_{K^{\rm sep}}$-module (by Lemma \ref{Lemma6}), and so $\cO_K\rightarrow \cO_{K^{\rm sep}}$ is weakly \'etale.

We will now show that if $\Omega_{\cO_{K^{\rm sep}}|\cO_K}$ is almost zero, then $\Omega_{\cO_{K^{\rm sep}}|\cO_K}$ is zero, from which the equivalence of 1) and 2) follows. We assume that $\Omega_{\cO_{K^{\rm sep}}|\cO_K}$
is almost zero but not  zero, and will derive a contradiction. We will repeatedly make use of the fact that if $K\rightarrow L\rightarrow M$ is a tower of valued fields, then $\cO_L\rightarrow \cO_M$ is faithfully flat, so that $\Omega_{\cO_L|\cO_K}$ is almost zero if and only if $\Omega_{\cO_L|\cO_K}\otimes_{\cO_L}\cO_M$ is almost zero  and $\Omega_{\cO_L|\cO_K}$ is  zero if and only if $\Omega_{\cO_L|\cO_K}\otimes_{\cO_L}\cO_M$ is  zero.

By Equations (\ref{eq26}) and (\ref{eq27}), if $L\in S$,
 there exists an injection $\Omega_{\mathcal O_L|\cO_K}\otimes_{\cO_L}\cO_{K^{\rm sep}}\rightarrow
 \Omega_{\cO_{K^{\rm sep}}|\cO_K}$. Thus $\Omega_{\cO_{K^{\rm sep}}|\cO_K}$
 almost zero but not  zero implies that there exists a finite Galois extension $L$ of $K$ such that $\Omega_{\cO_L|\cO_K}$ is almost zero but not zero. After possibly extending $K\rightarrow L$ to a larger finite Galois  extension $K\rightarrow M$, so that $M$ contains enough primitive roots of unity, there is a tower of field extensions
 $$
 K\rightarrow M_0\rightarrow M_1\rightarrow \cdots\rightarrow M_n=M
 $$
 where $M_0$ is the inertia field of $M/K$ and each extension $M_i\rightarrow M_{i+1}$ is an Artin-Schreier extension or a Kummer extension of prime degree (Proposition \ref{Prop3*}). We have that $\Omega_{\cO_L|\cO_K}\otimes_{\cO_K}\cO_M$ is a submodule of   $\Omega_{\cO_M|\cO_K}$  and $\Omega_{\cO_M|\cO_K}\otimes_{\cO_M}\cO_{K^{\rm sep}}$ is a submodule of  $\Omega_{\cO_{K^{\rm sep}}|\cO_K}$
 by Theorem \ref{Theorem4*}, so $\Omega_{\cO_M|\cO_K}$ is almost zero but not zero.  We have that $\Omega_{\cO_{M_0}|\cO_K}=0$ by Equation (\ref{eq42}), so 
 $\Omega_{\cO_M|\cO_K}=\Omega_{\cO_M|\cO_{M_0}}$ by  \cite[Theorem 25.1]{Mat}. By Theorem \ref{Theorem4*}, we thus have that 
 $\Omega_{\cO_{M_{i+1}}|\cO_{M_i}}$ is almost zero for all $i$ but some  $\Omega_{\cO_{M_{i+1}}|\cO_{M_i}}$ is not zero.   
 By Proposition \ref{PropN2}, this is not possible.
 Thus $\Omega_{\cO_{K^{\rm sep}}|\cO_K}$ is zero. 
 \end{proof}

 If $(K,v)$ is a valued field where $v$ is discrete of rank 1, and $\mathfrak m$ is the maximal ideal of $\mathcal O_K$, then $(\mathcal O_K,\mathfrak m)$ is not a basic setup (as defined in Chapter 2 of \cite{GR}). In this case, we must use the basic setup $(\cO_K,\cO_K)$, so that we are just doing ordinary mathematics. In particular, being ``almost zero'' is the same as being zero.
 
 The following lemma shows that the equivalent conditions of Theorem \ref{Theorem4'} can never occur if $(K,v)$ is discrete of rank 1.
 
 \begin{lemma} Suppose that $(K,v)$ is a valued field where $v$ is discrete of rank 1. Then $\Omega_{O_{K^{\rm sep}}|\mathcal O_K}$ is not zero and $\mathcal O_K\rightarrow \mathcal O_{K^{\rm sep}}$ is not weakly \'etale.
 \end{lemma}

 \begin{proof}
 We will show that either condition $\Omega_{O_{K^{\rm sep}}|\mathcal O_K}$ is not zero or $\mathcal O_K\rightarrow \mathcal O_{K^{\rm sep}}$ is not weakly \'etale
  implies that $v$ is not discrete, giving a contradiction. Suppose that $v$ is discrete (of rank 1).  Let $q$ be a prime which is relatively prime to the residue degree of $\cO_K$ and let $K\rightarrow L$ be a finite Galois extension such that $L$ contains all $q$-th roots of unity. The value group $vL$ is discrete  since $vK$ is discrete.  Let $u\in L$ be such that $v(u)$ is a generator of $vL$, and let $L\rightarrow M$ be the degree $q$ Kummer extension $M=L(\sqrt[q]{u})$. We have  that the ramification index of $M$ over $L$ is $e(M|L)=q$. Thus we are in case 1 of  \cite[Theorem 4.7]{CK} and by this theorem and our construction we have that $\Omega_{\cO_M|\cO_L}$ is not zero. There is a natural surjection of $\cO_M$-modules $\Omega_{\cO_M|\cO_K}\rightarrow \Omega_{\cO_M|\cO_L}$, by the first fundamental exact sequence,  \cite[Theorem 25.1]{Mat}. By Theorem \ref{Theorem4*}, $\Omega_{\cO_M|\cO_K}\otimes_{\cO_M}\cO_{K^{\rm sep}}\rightarrow \Omega_{\cO_{K^{\rm sep}}|\cO_K}$ is an injection. Thus $\Omega_{\cO_{K^{\rm sep}}|\cO_K}\ne 0$.
 
 
 The proof of Lemma \ref{Lemma4} extends to show that if $A\rightarrow B$ is a homomorphism of rings and $B$ is a flat $B\otimes_AB$-module, then $\Omega_{B/A}=0$. The condition that $\mathcal O_K\rightarrow \mathcal O_{K^{\rm sep}}$ is weakly \'etale is just that $\mathcal O_{K^{\rm sep}}$ is a flat $\mathcal O_{K^{\rm sep}}\otimes_{\mathcal O_K}\mathcal O_{K^{\rm sep}}$-module (since we have  the basic set up $(\mathcal O_K,I=\mathcal O_K))$. Thus $\mathcal O_K\rightarrow \mathcal O_{K^{\rm sep}}$ is not weakly \'etale.
 \end{proof}

\section{Deeply ramified extensions of a local field}\label{Section4}

In this section, we consider the equivalent conditions characterizing deeply ramified fields, as they define deeply ramified fields  in \cite{CG}, and show that they are indeed the algebraic extensions $K$ of the p-adics $\Q_p$  which satisfy $\Omega_{\cO_{\overline{\Q_p}}|\cO_K}=0$,
for an algebraic closure $\overline{\Q_p}$ of $\Q_p$ which contains $K$. Since $\Q_p$ is henselian, there is a unique extension of the $p$-adic valuation of $\Q_p$ to a valuation $v$ of an algebraic closure $\overline{\Q_p}$ of $\Q_p$.

We begin by recalling the construction used in  \cite{CG} to analyze an algebraic extension $K$ of $\Q_p$. We express
$K=\cup_{n=0}^{\infty}F_n$ as a union of finite algebraic field extensions $F_n$ of $\Q_p$ such that $F_n\subset F_{n+1}$ for all $n$.
Then each $\cO_{F_n}$ is the integral closure of $\Z_p$ in $F_n$, and is a discrete valuation ring. We have that $\cup \cO_{F_n}=\cO_K$.

Now let $K'$ be a finite extension of $K$. 
In \cite[Chapter V, Section 4, Lemma 6]{Se}, it is shown that the extension $K'/K$ can be realized as follows. 
Let $r=[K':K]$ and $\omega_1,\ldots,\omega_r$ be a basis of $K'$ over $K$. We have relations
$$
\omega_i\omega_j=\sum c_{ij}^{k}\omega_k
$$
with $c_{ij}^k\in K$. Choose $n_0$ large enough that all of the $c_{ij}^k$ are in $F_{n_0}$. Then define $F_{n_0}'$ to be the $r$-dimensional $F_{n_0}$ vector subspace $F_{n_0}'=F_{n_0}\omega_1+\cdots +F_{n_0}\omega_r$ of $K'$. By our construction, $F_{n_0}'$ is a subfield of $K'$.  
 Then define $F_{n}'=F_{n_0}'F_n$ for $n\ge n_0$. Thus $F_{n}'=F_{n}\omega_1+\cdots +F_{n}\omega_r$. Since $\omega_1,\ldots,\omega_r$ are linearly independent over $K$,  
 we  have that
\begin{equation}\label{eqCG1}
F_n'\otimes_{F_n}K\cong F_n'K=K'
\end{equation}
for $n\ge n_0$.

Let $\delta_n=\delta(F_n'/F_n)$ be the different of $\cO_{F_n'}$ over $\cO_{F_n}$. The different is defined in \cite[Chapter III, Section 3]{Se} or in \cite[Chapter V, Section 11]{ZS1}. Since $\cO_{F_n}$ and $\cO_{F_n'}$ are  discrete valuation rings, we have that
\begin{equation}\label{eqCG3}
\Omega_{\cO_{F_n'}|\cO_{F_n}}\cong \cO_{F_n'}/(\delta(F_n'/F_n))
\end{equation}
by \cite[Chapter III, Section 7, Proposition 14]{Se}. By \cite[Lemma 2.6]{CG} and its proof, $v(\delta_n)$ is a decreasing sequence for $n\ge n_0$ and   $\lim_{n\rightarrow \infty}v(\delta_n)$ does not depend on our choices of $\{F_n\}$ or $\{F_n'\}$.

We may now state the theorem defining deeply ramified fields in \cite{CG}. They call the  fields satisfying the equivalent conditions of this theorem as deeply ramified fields, which is the origin of this name. We will call the fields satisfying the equivalent conditions of Theorem \ref{CGtheorem}, Coates Greenberg deeply ramified fields. 

\begin{theorem}(page 143, \cite{CG})\label{CGtheorem} Let $\overline\Q_p$ be an algebraic closure of $\Q_p$ and let $v$ be the unique extension of the $p$-adic valuation to $\overline \Q_p$. Suppose that $K$ is an algebraic extension of $\Q_p$ which is contained in $\overline \Q_p$. Then the following are equivalent.
\begin{enumerate}
\item[i)] $K$ does not have finite conductor.
\item[ii)] $v(\delta(F_n/\Q_p))\rightarrow 0$ as $n\rightarrow 0$.
\item[iii)] $H^1(K,\cM_K)=0$
\item[iv)] for every finite extension $K'$ of $K$ we have ${\rm Tr}_{K'/K}(\cM_{K'})=\cM_K$.
\item[v)] for every finite extension $K'$ of $K$ we have $v(\delta(F_n'/F_n))\rightarrow 0$ as $n\rightarrow 0$.
\end{enumerate}
\end{theorem}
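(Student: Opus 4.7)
The plan is to organize the five equivalences into two natural groups: the ``metric'' conditions (ii), (iv), (v), which involve differents and traces, and the ``structural'' conditions (i) and (iii), which involve the ramification filtration and Galois cohomology. Within each group the equivalences are relatively direct, but the bridge between the two groups requires substantial input from local class field theory and Tate's theory of continuous cohomology of local fields.

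First I would handle (iv) $\iff$ (v). The classical characterization of the inverse different via the trace pairing,
$$
\delta(F_n'/F_n)^{-1} \;=\; \{x \in F_n' : \mbox{Tr}_{F_n'/F_n}(x\,\cO_{F_n'}) \subseteq \cO_{F_n}\},
$$
translates the valuation $v(\delta(F_n'/F_n))$ directly into the size of the image of $\mbox{Tr}_{F_n'/F_n}$ on sub-ideals of $\cO_{F_n'}$. Since $\cO_K = \bigcup_n \cO_{F_n}$ and $\cO_{K'} = \bigcup_n \cO_{F_n'}$ (using (\ref{eqCG1})), one passes to the limit to conclude that $v(\delta(F_n'/F_n)) \to 0$ is equivalent to $\mbox{Tr}_{K'/K}(\cM_{K'}) = \cM_K$. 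For (ii) $\iff$ (v) I would use the multiplicativity of differents in towers,
$$
v(\delta(F_n'/\Qp)) \;=\; v(\delta(F_n'/F_n)) + v(\delta(F_n/\Qp)),
$$
combined with the fact (\cite[Lemma 2.6]{CG}) that the limit of $v(\delta_n)$ is independent of the choice of approximating sequence: applying (ii) inside larger ambient fields containing a given $K'$ yields (v), while a matching sequence argument, taking $K'$ to range over a cofinal family of finite extensions of $\Qp$ inside $K$, gives the converse.

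For the implication (i) $\iff$ (iii), I would invoke Tate's classical computation of the continuous cohomology $H^1$ of the maximal ideal of a local field in terms of the upper-numbering ramification filtration of $\Gal(\overline{\Qp}/K)$. Via local class field theory, $K$ having finite conductor is equivalent to this filtration being eventually trivial; under Tate's description, this stabilization corresponds to the non-vanishing of $H^1(K,\cM_K)$, whereas the filtration being non-trivial at every finite level yields precisely $H^1(K,\cM_K) = 0$. Closing the loop to the metric conditions, the classical formula $v(\delta(F_n/\Qp)) = \sum_{i\ge 0}(|G_i|-1)/e$ expressing the different in terms of lower-numbering ramification groups of $F_n/\Qp$, combined with Herbrand's function to convert to the upper numbering, provides a direct link from (i) to (ii).

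The main obstacle is the step (i) $\iff$ (iii). The remaining implications are essentially bookkeeping with classical identities for the different---the trace formula, the tower formula, and the Hasse--Arf style comparison between lower and upper numbering. By contrast, the equivalence of a ramification-theoretic stabilization property with the vanishing of a continuous Galois cohomology group requires the full machinery of Tate's work on cohomology of local fields, together with local class field theory to translate between conductors and upper-numbering ramification groups. It is this step that accounts for the depth of the theorem and explains why Coates and Greenberg singled out $H^1(K,\cM_K)=0$ as the conceptual heart of their definition.
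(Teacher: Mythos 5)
First, a point of comparison: the paper does not actually prove Theorem \ref{CGtheorem}. It is quoted from Coates--Greenberg (page 143 of \cite{CG}) and used as a black box; the paper's own contribution in Section \ref{Section4} is only the equivalence of condition v) with $\Omega_{\cO_{\overline{\Q_p}}|\cO_K}=0$. So your sketch has to be judged against the original proof in \cite{CG}, whose broad architecture (trace characterization of the inverse different for iv)$\Leftrightarrow$v), the different--ramification-filtration formula for i)$\Leftrightarrow$ii), Tate's normalized traces for iii)) your outline does resemble. But there are concrete gaps. The most serious is ii)$\Leftrightarrow$v). You should first have noticed that condition ii) as printed cannot be right: read literally, $v(\delta(F_n/\Q_p))\rightarrow 0$ holds for $K=\Q_p$ itself (all $\delta(F_n/\Q_p)$ are units), yet $\Q_p$ has finite conductor and fails iv); the intended condition is $v(\delta(F_n/\Q_p))\rightarrow\infty$. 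With the corrected statement, your tower formula $v(\delta(F_n'/\Q_p))=v(\delta(F_n'/F_n))+v(\delta(F_n/\Q_p))$ expresses $v(\delta(F_n'/F_n))$ as a difference of two sequences both tending to infinity, which a priori can converge to anything; the tower formula alone proves nothing. The actual content of this step in \cite{CG} is a quantitative bound on the relative different $v(\delta(F'/F))$ in terms of the largest upper-numbering ramification break of $F'$ over the base and of $F$ over the base (their Proposition 2.3), showing that when the upper breaks of $F_n/\Q_p$ run off to infinity they eventually dominate those contributed by the fixed extension $K'_0$ generating $K'$ over $K$, forcing $v(\delta(F_n'/F_n))\rightarrow 0$. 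Your converse direction is also confused: ``taking $K'$ to range over a cofinal family of finite extensions of $\Q_p$ inside $K$'' describes the fields $F_n$, which are not finite extensions of $K$, so condition v) says nothing about them; one instead shows that if $d(K/\Q_p)<\infty$ then $K$ has finite conductor and one can exhibit a specific finite $K'/K$ with $\delta(K'/K)>0$.

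The step i)$\Leftrightarrow$iii) is, as you say, the deep one, but your treatment of it is an appeal to authority rather than an argument, and it misattributes the machinery. No local class field theory is needed to relate ``finite conductor'' to the ramification filtration: having finite conductor is \emph{defined} by $K\subset\Q_p^{(w)}$ for some $w$, i.e.\ by eventual stabilization of the upper-numbering filtration. The genuine bridge to $H^1$ (which, note, must be $H^1$ of $\Gal(\overline{\Q_p}/K)$ acting on the maximal ideal of $\cO_{\overline{\Q_p}}$, not on $\cM_K$ as the paper's notation suggests) is Tate's normalized-trace technique, and in \cite{CG} it is routed through the trace condition iv): almost-surjectivity of traces lets one correct cocycles to coboundaries up to errors of arbitrarily small valuation, giving iv)$\Rightarrow$iii), while finite conductor yields uniformly bounded normalized traces and hence an explicit nonzero class in $H^1$. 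Without at least this reduction of iii) to iv), the loop in your proposal is not closed.
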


Let $G$ be the Galois group of $\overline \Q_p$ over $\Q_p$. For $w\in [-1,\infty)$, let $G^{(w)}$ be the $w$-th ramification group of $G$ in the upper numbering (\cite[Chapter 4, Section 3]{Se}) and $\Q_p^{(w)}$ be the fixed field of $G^{(w)}$. The field $K$ is said to have finite conductor if $K\subset \Q_p^{(w)}$ for some $w\in [-1,\infty)$.

We will show that the equivalent conditions of Theorem \ref{CGtheorem} are indeed equivalent to 
$$
\Omega_{\cO_{\overline{\Q_p}}|\cO_K}=0.
$$
Specifically, we will show that $\Omega_{\cO_{\overline{\Q_p}}|\cO_K}=0$ if and only if condition v) of Theorem \ref{CGtheorem} holds. 

\begin{theorem}\label{localfieldDR} Suppose that $K$ is an algebraic extension of $\Z_p$. Then $K$ is Coates Greenberg deeply ramified if and only if $K$ is deeply ramified; that is, if and only if $\Omega_{\cO_{\overline{\Q_p}}|\cO_K}=0$.
\end{theorem}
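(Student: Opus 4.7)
The plan is to establish that $\Omega_{\cO_{\overline{\Q_p}}|\cO_K}=0$ is equivalent to condition v) of Theorem \ref{CGtheorem}. Since Theorem \ref{CGtheorem} already asserts that the Coates--Greenberg conditions i)--v) are mutually equivalent, this suffices.

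First I would reduce to finite Galois subextensions. Since $\overline{\Q_p}=\Q_p^{\rm sep}$, equation \eqref{eq41} writes $\Omega_{\cO_{\overline{\Q_p}}|\cO_K}$ as the filtered colimit of $\Omega_{\cO_L|\cO_K}\otimes_{\cO_L}\cO_{\overline{\Q_p}}$ over finite Galois $L/K$ inside $\overline{\Q_p}$. Theorem \ref{Theorem4*} applied to Galois towers $K\subset L\subset M$, combined with faithful flatness of extensions of valuation rings (Lemma \ref{Lemma50}), makes the transition maps injective, so $\Omega_{\cO_{\overline{\Q_p}}|\cO_K}=0$ iff $\Omega_{\cO_L|\cO_K}=0$ for every such $L$. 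On the Coates--Greenberg side, any finite $K'/K$ embeds into its Galois closure $L$, and the tower formula for differents yields $v(\delta_n(L))\ge v(\delta_n(K'))$ along the corresponding Coates--Greenberg towers, so it suffices to verify condition v) only for finite Galois $L$. The whole theorem thus reduces to the claim: for every finite Galois $L/K$ with Coates--Greenberg tower $L=\cup F_n'$, $K=\cup F_n$, $\Omega_{\cO_L|\cO_K}=0$ iff $v(\delta_n)\to 0$.

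Next I would compute $\Omega_{\cO_L|\cO_K}$ as a filtered colimit. Since K\"ahler differentials commute with filtered colimits of algebras, $\Omega_{\cO_L|\cO_K}=\lim_n\Omega_{\cO_{F_n'}|\cO_{F_n}}$; by \eqref{eqCG3} each term is the cyclic module $\cO_{F_n'}/\delta_n$ generated by $d\alpha_n$ for a primitive element $\alpha_n$ of $\cO_{F_n'}$ over $\cO_{F_n}$. For $n\le m$, writing $\alpha_n=g(\alpha_m)$ with $g\in\cO_{F_m}[x]$, the transition sends $d\alpha_n$ to the class of $g'(\alpha_m)$ in $\cO_{F_m'}/\delta_m$. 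Comparing the two first fundamental exact sequences for the towers $\cO_{F_n}\to\cO_{F_n'}\to\cO_{F_m'}$ and $\cO_{F_n}\to\cO_{F_m}\to\cO_{F_m'}$ under the identifications $\Omega_{\cO_{F_m'}|\cO_{F_n}}\cong\cO_{F_m'}/\delta(F_m'/F_n)$, and using the tower identity $\delta(F_m'/F_n)=\delta(F_m'/F_n')\,\delta_n\,\cO_{F_m'}=\delta_m\,\delta(F_m/F_n)\,\cO_{F_m'}$, one shows this class vanishes iff $v(\delta(F_m'/F_n'))\ge v(\delta_m)$, equivalently $v(\delta(F_m/F_n))\ge v(\delta_n)$. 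Hence $\Omega_{\cO_L|\cO_K}=0$ iff for every $n$ there exists $m\ge n$ with $v(\delta(F_m/F_n))\ge v(\delta_n)$.

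For the $(\Leftarrow)$ direction, assuming $v(\delta_n)\to 0$: for fixed $n$ the sequence $v(\delta(F_m'/F_n'))$ is non-decreasing in $m$ by the tower formula and eventually strictly positive (otherwise every $F_m'/F_n'$ would be unramified, $v\cO_L=v\cO_{F_n'}$ would be discrete, and hence $vK$ would be discrete, which by Theorem \ref{GRThm2*} is incompatible with a nontrivial $v(\delta_n)\to 0$); then $v(\delta_m)\to 0$ yields the required inequality for $m$ sufficiently large. For the $(\Rightarrow)$ direction, if $v(\delta_n)\ge D>0$ for all $n$, the criterion gives $v(\delta(F_m/\Q_p))\ge v(\delta(F_n/\Q_p))+D$ for some $m\ge n$ (for every $n$), whence iterating shows $v(\delta(F_n/\Q_p))\to\infty$; this means $K$ does not have finite conductor (Coates--Greenberg condition i)), so by Theorem \ref{CGtheorem} condition v) holds in particular for $K'=L$, forcing $v(\delta_n)\to 0$ and contradicting the assumption. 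The main obstacle is the valuation identification in the central computation: verifying that $v(g'(\alpha_m))\equiv v(\delta(F_m'/F_n'))\pmod{v(\delta_m)}$ requires careful bookkeeping of how the cyclic generator $d\alpha_n$ is transported under base change through the two first fundamental exact sequences together with the tower formula for the different, and is the chief technical content of the argument.
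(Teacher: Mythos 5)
Your route is genuinely different from the paper's and, in outline, viable, but it contains one real gap. The paper never tries to relate $v(\delta_n)\to 0$ directly to the \emph{vanishing} of $\Omega_{\cO_{K'}|\cO_K}$; Proposition \ref{CGC} only relates it to $\Omega_{\cO_{K'}|\cO_K}$ being \emph{almost zero} (a much softer statement, requiring nothing beyond the containments $\epsilon^2J\subset J_n$ and $\delta_nJ_n\subset J_n^2$), and the upgrade from almost zero to zero is delegated wholesale to Theorem \ref{Theorem4'}, whose proof rests on the Artin--Schreier/Kummer classification of Theorem \ref{Theorem1*}. You instead compute $\Omega_{\cO_L|\cO_K}$ as a filtered colimit of the cyclic modules $\cO_L/\delta_n\cO_L$ and extract an explicit vanishing criterion in terms of differents, avoiding both almost mathematics and Theorem \ref{Theorem4'}. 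The cost is that the whole argument now rests on the assertion that the image of $d\alpha_n$ in $\Omega_{\cO_{F_m'}|\cO_{F_m}}$ vanishes iff $v(\delta(F_m'/F_n'))\ge v(\delta_m)$, which you state but do not prove and explicitly flag as ``the chief technical content.'' The claim is true, but as written this is a genuine gap: to establish it you need the first fundamental sequence for $\cO_{F_n}\to\cO_{F_n'}\to\cO_{F_m'}$ to be exact on the left, so that $d\alpha_n$ generates the submodule $(\delta(F_m'/F_n'))/(\delta(F_m'/F_n))$ of $\Omega_{\cO_{F_m'}|\cO_{F_n}}\cong\cO_{F_m'}/\delta(F_m'/F_n)$; left exactness here follows from a length count via transitivity of the different, and the image in $\Omega_{\cO_{F_m'}|\cO_{F_m}}$ is then read off from the quotient by $\langle d\,\cO_{F_m}\rangle=(\delta_m)/(\delta(F_m'/F_n))$. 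Until that is carried out, your criterion is unsupported.

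Two smaller points. In the $(\Leftarrow)$ direction, the parenthetical excluding ``all $F_m'/F_n'$ unramified'' by appeal to Theorem \ref{GRThm2*} is circular, since you have not yet shown $K$ is deeply ramified; fortunately the case need not be excluded: if every $F_m'/F_n'$ is unramified then all $v(\delta_m)$ lie in the fixed discrete group $vF_n'$, so $v(\delta_m)\to 0$ forces $v(\delta_m)=0$ for large $m$ and the criterion holds trivially. In the $(\Rightarrow)$ direction you close the contradiction by invoking the equivalence i)/ii) $\Leftrightarrow$ v) of Theorem \ref{CGtheorem}; that is legitimate since the theorem is quoted as known, but note you are using the standard Coates--Greenberg form of condition ii), namely $v(\delta(F_n/\Q_p))\to\infty$, rather than the condition as printed in the statement above.
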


Before proving this theorem, we will study in more detail a finite algebraic extension $K'$ of an algebraic extension $K$ of $\Z_p$.
Let $\{F_n\}$ and $\{F_n'\}$ be the fields constructed before the statement of Theorem \ref{CGtheorem}.

Let $r=[K':K]$. We have by (\ref{eqCG1}) that $F_n'\otimes_{F_n}K\cong F_n'K=K'$ for all $n\ge n_0$. Let $S_n=\mathcal O_{F_n'}\otimes_{\mathcal O_{F_n}}\mathcal O_K$ for $n\ge n_0$. Since $\mathcal O_{F_n'}$ is a flat $\mathcal O_{F_n}$-module, for all $n\ge n_0$, there exists a basis $w(n)_1,\ldots,w(n)_r$ of $\mathcal O_{F_n'}$ as an $\mathcal O_{F_n}$-module. Thus
$S_n$ is a free $\mathcal O_K$-module with basis $w(n)_1,\ldots,w(n)_r$. By Lemmas \ref{Lemma50} and \ref{Lemma1}, 
we have  inclusions
$$
S_n=\mathcal O_{F_n'}\otimes_{\mathcal O_{F_n}}\mathcal O_K\subset F_n'\otimes_{\mathcal O_{F_n}}K=K'.
$$
We have that $\cup S_n=\cO_{K'}$  since $\cO_{K'}$ is the integral closure of  $\cO_{K}$ in  $K'$. Since $\cO_F\rightarrow \cO_K$ is flat, we have by \cite[Proposition 16.4]{E} and (\ref{eqCG3}),  that 
$$
\Omega_{S_n|\mathcal O_K}\cong \left(\Omega_{\mathcal O_{F_n}|\mathcal O_F}\right)\otimes_{\mathcal O_F}\mathcal O_K
\cong\mathcal O_{F_n}\otimes_{\mathcal O_F}\mathcal O_K/\delta_n\mathcal O_{F_n}\otimes_{\mathcal O_F}\mathcal O_K
\cong S_n/\delta_n S_n.
$$

\begin{proposition}\label{CGC} Let $K'$ be a finite extension of $K$ and assume that $vK$ is not discrete. Then $v(\delta_n)\rightarrow 0$ as $n\rightarrow \infty$ if and only if $\Omega_{\mathcal O_{K'}|\mathcal O_K}$ is almost zero.
\end{proposition}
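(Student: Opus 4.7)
The plan is to reduce everything to a colimit presentation of $\Omega_{\mathcal{O}_{K'}|\mathcal{O}_K}$. From the discussion preceding the proposition, $\mathcal{O}_{K'}=\varinjlim_n S_n$ and $\Omega_{S_n|\mathcal{O}_K}\cong S_n/\delta_n S_n$. Applying Eisenbud's Theorem 16.8 (as in \eqref{eq41}) and tensoring with $\mathcal{O}_{K'}$ then yields
\[
\Omega_{\mathcal{O}_{K'}|\mathcal{O}_K}\;\cong\;\varinjlim_n\bigl(\Omega_{S_n|\mathcal{O}_K}\otimes_{S_n}\mathcal{O}_{K'}\bigr)\;\cong\;\varinjlim_n\mathcal{O}_{K'}/\delta_n\mathcal{O}_{K'},
\]
with transition maps induced by functoriality of Kähler differentials along $S_n\hookrightarrow S_m$. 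All subsequent work takes place inside this direct system of cyclic $\mathcal{O}_{K'}$-modules.

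For the forward direction, I would argue that $v(\delta_n)\to 0$ forces every $\epsilon\in\mathcal{M}_K$ to annihilate $\Omega$. Given $\omega\in\Omega_{\mathcal{O}_{K'}|\mathcal{O}_K}$, pick $n$ with $\omega$ represented by some class at stage $n$, and then choose $m\ge n$ large enough that $v(\delta_m)<v(\epsilon)$. Then $\epsilon\in\delta_m\mathcal{O}_{K'}$, so multiplication by $\epsilon$ is identically zero on $\mathcal{O}_{K'}/\delta_m\mathcal{O}_{K'}$; after pushing $\omega$ to stage $m$ and scaling, one obtains $\epsilon\omega=0$ in the colimit. Hence $\Omega$ is almost zero.

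For the backward direction, assume $\gamma:=\lim v(\delta_n)>0$. Since $v$ is rank one and non-discrete, $vK$ is dense in $\mathbb{R}_{\ge 0}$, so there exists $\epsilon\in\mathcal{M}_K$ with $0<v(\epsilon)<\gamma$; I need to produce $\omega\in\Omega$ with $\epsilon\omega\ne 0$. The strategy is to show that $\mathrm{ann}_{\mathcal{O}_{K'}}(\Omega_{\mathcal{O}_{K'}|\mathcal{O}_K})$ is an ideal of $\mathcal{O}_{K'}$ whose valuation-threshold equals $\gamma$. To make the colimit transparent, I pass to a cofinal subtower in which a fixed primitive element $\theta\in\mathcal{O}_{K'}$ with $K'=K(\theta)$ serves simultaneously as a generator of $\mathcal{O}_{F_n'}$ over $\mathcal{O}_{F_n}$ for every $n$ in the subsequence. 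Under this choice the transition maps $\mathcal{O}_{K'}/\delta_n\mathcal{O}_{K'}\to\mathcal{O}_{K'}/\delta_m\mathcal{O}_{K'}$ become the natural quotient maps (since $d\theta\mapsto d\theta$ gives $[1]\mapsto[1]$, and this is well-defined because $v(\delta_m)\le v(\delta_n)$ implies $\delta_n\mathcal{O}_{K'}\subseteq\delta_m\mathcal{O}_{K'}$), so
\[
\Omega_{\mathcal{O}_{K'}|\mathcal{O}_K}\;\cong\;\mathcal{O}_{K'}\big/\bigcup_n\delta_n\mathcal{O}_{K'}\;=\;\mathcal{O}_{K'}/\{x\in\mathcal{O}_{K'}:v(x)>\gamma\},
\]
whose annihilator is $\{x:v(x)>\gamma\}$. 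This ideal misses our chosen $\epsilon$, so $\epsilon\cdot 1\ne 0$ in $\Omega$ and $\Omega$ is not almost zero.

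The main obstacle I expect is justifying the cofinal choice $\theta_n=\theta$: one must show that a primitive element of $K'/K$ can be picked so that $\mathcal{O}_{F_n'}=\mathcal{O}_{F_n}[\theta]$ holds on a cofinal set of indices. This uses monogenicity of finite extensions of local fields together with a compatibility check across stages, and is legitimate because both $\gamma$ and $\Omega_{\mathcal{O}_{K'}|\mathcal{O}_K}$ are intrinsic to $K'/K$ (the independence of $\gamma$ is exactly the content of \cite[Lemma 2.6]{CG}, cited in the setup). Should this route present difficulties, the same conclusion can be reached by computing the transition factor $c_{n,m}$ with $\theta_n=P_{n,m}(\theta_m)$ explicitly and using multiplicativity of the different in the towers $F_n\subset F_n'\subset F_m'$ and $F_n\subset F_m\subset F_m'$ to show that the annihilator of the image of $[1]\in\mathcal{O}_{K'}/\delta_n\mathcal{O}_{K'}$ in $\Omega$ is contained in $\{x:v(x)\ge\gamma\}$ for some $n$.
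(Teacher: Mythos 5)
Your forward implication ($v(\delta_n)\rightarrow 0$ implies $\Omega_{\mathcal{O}_{K'}|\mathcal{O}_K}$ almost zero) is correct: the identification $\Omega_{\mathcal{O}_{K'}|\mathcal{O}_K}\cong\lim_{\rightarrow}\mathcal{O}_{K'}/\delta_n\mathcal{O}_{K'}$ is legitimate, and your observation that $\epsilon$ kills the \emph{entire} stage-$m$ term as soon as $v(\delta_m)<v(\epsilon)$ requires no information about the transition maps. This is arguably cleaner than the paper's argument for that direction, which works instead with the conormal ideals $J_n\subseteq J$ of the multiplication maps $S_n\otimes_{\mathcal{O}_K}S_n\rightarrow S_n$ and $\mathcal{O}_{K'}\otimes_{\mathcal{O}_K}\mathcal{O}_{K'}\rightarrow\mathcal{O}_{K'}$, using $\epsilon^2J\subseteq J_n$ for $n\geq\sigma(\epsilon)$ (almost finite generation) and $\delta_nJ_n\subseteq J_n^2$ to conclude $\delta_n\epsilon^2\Omega_{\mathcal{O}_{K'}|\mathcal{O}_K}=0$.

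The reverse implication has a genuine gap. Your main route rests on choosing a single $\theta$ with $\mathcal{O}_{F_n'}=\mathcal{O}_{F_n}[\theta]$ for a cofinal set of $n$. If such a $\theta$ existed, then $S_n=\mathcal{O}_{F_n'}\otimes_{\mathcal{O}_{F_n}}\mathcal{O}_K\cong\mathcal{O}_K[x]/(f)$ would be independent of $n$ (here $f$ is the minimal polynomial of $\theta$ over $K$, which has degree $r=[K':K]$ for all large $n$), so $\mathcal{O}_{K'}=\bigcup_nS_n=\mathcal{O}_K[\theta]$ would be a finite free $\mathcal{O}_K$-module and $v(\delta_n)$ would be eventually constant. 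But $\mathcal{O}_{K'}$ is in general only \emph{almost} finitely generated over $\mathcal{O}_K$ — that failure of finite generation is exactly what the whole setup is designed to handle — so no such $\theta$ exists in general; monogenicity of each individual $\mathcal{O}_{F_n'}/\mathcal{O}_{F_n}$ does not yield a compatible choice across stages. Without it, the transition map at stage $m$ multiplies the class of $d\theta_n$ by $P_{n,m}'(\theta_m)$, which is not a unit, so the colimit is not $\mathcal{O}_{K'}/\bigcup_n\delta_n\mathcal{O}_{K'}$; your fallback of tracking these factors via multiplicativity of the different is the right instinct but is only sketched, and controlling $v(\delta_m)-v(P_{n,m}'(\theta_m))$ is precisely where the content lies. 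The paper sidesteps the transition maps entirely: assuming $\epsilon\Omega_{\mathcal{O}_{K'}|\mathcal{O}_K}=0$, i.e.\ $\epsilon J\subseteq J^2$, and using $\epsilon^2J\subseteq J_n\subseteq J$ for $n\geq\sigma(\epsilon)$, one gets $\epsilon^5J_n\subseteq\epsilon^4J^2=(\epsilon^2J)^2\subseteq J_n^2$, hence $v(\delta_n)\leq 5v(\epsilon)$ for all large $n$; since $v$ is non-discrete, $v(\epsilon)$ may be taken arbitrarily small, giving $v(\delta_n)\rightarrow 0$. You need an argument of this kind (or a completed version of your fallback) to close this direction.
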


\begin{proof}
Since $\cup S_i=\mathcal O_{K'}$, the proof of Proposition \ref{Prop4} with the family $\mathcal W$ replaced by the family $\Sigma=\{S_i\mid i\ge n_0\}$, shows that (\ref{eq12}) of Proposition \ref{Prop4} holds for the family $\Sigma$; that is, for every $\epsilon\in \cM_K$, there exists $S_i\in\Sigma$ such that $\epsilon\mathcal O_{K'}\subset S_i$.
So 
\begin{equation}\label{eqCG2}\mbox{Given $\epsilon\in\cM_K$, there exists $\sigma(\epsilon)\in \Z_{>0}$ such that $i\ge \sigma(\epsilon)$ implies $\epsilon\mathcal O_{K'}\subset S_i$.}
\end{equation}
Lemmas \ref{Lemma50} and \ref{Lemma1} imply that for all $n$, $S_n\otimes_{\cO_K}S_n\subset \cO_{K'}\otimes_{\cO_K}\cO_{K'}$. Thus we have a commutative diagram where the rows are injections and the columns are short exact,
$$
\begin{array}{ccc}
0&&0\\
\downarrow&&\downarrow\\
J_n&\rightarrow & J\\
\downarrow&&\downarrow\\
S_n\otimes_{\cO_K}S_n&\rightarrow&\cO_{K'}\otimes_{\cO_K}\cO_{K'}\\
\downarrow&&\downarrow \\
S_n&\rightarrow &S\\
\downarrow&&\downarrow\\
0&&0
\end{array}
$$
where the homomorphisms $S_n\otimes_{\cO_K}S_n\rightarrow S_n$ and $\cO_{K'}\otimes_{\cO_K}\cO_{K'}\rightarrow \cO_{K'}$ are the multiplication maps. Thus $\Omega_{S_n|\cO_K}\cong J_n/J_n^2$ and $\Omega_{\cO_{K'}|\cO_K}\cong J/J^2$.

For $\epsilon\in \cM_K$, $\epsilon^2(\cO_{K'}\otimes_{\cO_K}\cO_{K'})\subset S_n\otimes_{\cO_K}S_n$ if $n\ge \sigma(\epsilon)$ 
which implies that $\epsilon^2J\subset J_n$ if $n\ge \sigma(\epsilon)$.

Suppose that $\Omega_{\cO_{K'}|\cO_K}$ is almost zero. 
Let $\epsilon\in \cM_{K}$. Then $\epsilon\Omega_{\cO_{K'}|\cO_K}=0$, which   implies 
$\epsilon J \subset J^2$ which implies
$$
\epsilon^4J_n\subset \epsilon^4J\subset \epsilon^4J^2\subset J_n^2
$$
which implies that  $v(\delta_n)\le 4v(\epsilon)$ for $n\ge \sigma(\epsilon)$ and so $v(\delta_n)\rightarrow 0$ as $n\rightarrow \infty$.

Now suppose that $v(\delta_n)\rightarrow 0$ as $n\rightarrow\infty$. Given $\lambda,\epsilon\in \cM_{K'}$, there exists $l\in \Z_{>0}$ such that $n\ge l$ implies $v(\delta_n)<\lambda$ and $\epsilon^2J\subset J_n$. Thus 
$$
  \delta_n\epsilon^2J\subset \delta_nJ_n\subset J_n^2\subset J^2
  $$
  which implies that $\delta_n\epsilon^2\Omega_{\cO_{K'}|\cO_K}=0$. Thus $\lambda\epsilon^2\Omega_{\cO_L|\cO_K}=0$, and so
  $\Omega_{\cO_{K'}|\cO_K}$ is almost zero.
\end{proof}

\begin{lemma}\label{LemmaCGA} The field $K$ is Coates Greenberg deeply ramified if and only if for every finite Galois extension $K'$ of $K$, $v(\delta(F_n'/F_n))\rightarrow 0$ as $n\rightarrow \infty$.
\end{lemma}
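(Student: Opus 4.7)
The plan is to handle the easy direction by definition and the hard direction by reducing an arbitrary finite extension to its Galois closure via the multiplicativity of the different in towers of discrete valuation rings. The forward implication is immediate: if $K$ is Coates Greenberg deeply ramified then condition v) of Theorem \ref{CGtheorem} holds for every finite extension $K'/K$, and this certainly includes every finite Galois extension.

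For the converse, fix an arbitrary finite extension $K'/K$ and let $K''/K$ be the Galois closure of $K'$ inside $\overline{\Q_p}$. I would choose a basis $\omega_1,\ldots,\omega_r$ of $K'/K$ and extend it to a basis $\omega_1,\ldots,\omega_s$ of $K''/K$, then pick $n_0$ large enough that all structure constants of both bases lie in $F_{n_0}$, and set $F_n' = \sum_{i=1}^r F_n \omega_i \subset F_n'' = \sum_{i=1}^s F_n \omega_i$ for $n \ge n_0$; by the construction of \cite[Ch.~V, \S 4, Lemma 6]{Se} recalled before the statement of Theorem \ref{CGtheorem}, both are subfields of $\overline{\Q_p}$. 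By \cite[Lemma 2.6]{CG} the limit $\lim_n v(\delta(F_n'/F_n))$ is independent of such choices, so nothing is lost by working with this compatible pair fitting into a single tower.

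The key step is the multiplicativity of the different in the tower of discrete valuation rings $\mathcal O_{F_n} \subset \mathcal O_{F_n'} \subset \mathcal O_{F_n''}$. Since $\overline{\Q_p}$ has characteristic $0$ all the extensions are separable, and the standard identity for the different in a tower yields
$$
v(\delta(F_n''/F_n)) \;=\; v(\delta(F_n''/F_n')) + v(\delta(F_n'/F_n)).
$$
Both summands on the right are non-negative, and the hypothesis, applied to the finite Galois extension $K''/K$, forces the left-hand side to tend to $0$; hence $v(\delta(F_n'/F_n)) \to 0$ as well. Since $K'/K$ was arbitrary, condition v) of Theorem \ref{CGtheorem} holds and $K$ is Coates Greenberg deeply ramified. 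The only real obstacle is bookkeeping: one has to arrange the sequences $\{F_n'\}$ and $\{F_n''\}$ to sit inside a common tower, but the independence of the limit from the choice of sequence (\cite[Lemma 2.6]{CG}) and the standard multiplicativity of the different in towers of local fields render this completely routine.
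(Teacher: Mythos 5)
Your proposal is correct and follows essentially the same route as the paper: the forward direction is definitional, and the converse passes to a Galois extension containing the given finite extension, arranges the approximating fields into a single tower $F_n \subset F_n' \subset F_n''$, and applies the transitivity formula $v(\delta(F_n''/F_n)) = v(\delta(F_n''/F_n')) + v(\delta(F_n'/F_n))$ together with non-negativity of both summands. The only cosmetic difference is that the paper labels the arbitrary finite extension $K''$ and the enclosing Galois extension $K'$, whereas you do the reverse.
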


\begin{proof} Suppose that  $v(\delta(F_n'/F_n))\rightarrow 0$ as $n\rightarrow \infty$ for every finite Galois extnsion $K'$ of $K$ and $K''$ is a finite extension of $K$. Let $K\rightarrow K'$ be a finite Galois extension of $K$ such that $K''$ is a subextension. Using the construction before the statement of Theorem \ref{CGtheorem}, we can find finite field extensions $\{F_n\}$ of $\Z_p$ such that $\cup F_n=K$, and find finite field extensions 
$F_n\rightarrow F_n''\rightarrow F_n'$ for $n\ge  n_0$ for some $n_0$ such that $\cup F_n''=K''$, $\cup F_n'=K'$, and the other properties of the families given before the statement of Theorem \ref{CGtheorem} hold for the family  $\{F_n''\}$ for $K''$ and for the family $\{F_n'\}$ for $K'$. By the transitivity formula for Dedekind domains of \cite[Theorem 31, page 309]{ZS1} or \cite[Chapter III, Section 4, Proposition 8]{Se}, 
$\delta(F_n'/F_n)=\delta(F_n'/F_n'')\delta(F_n''/F_n)$. Thus $v(\delta(F_n'/F_n))=v(\delta(F_n'/F_n''))+v(\delta(F_n''/F_n))$ and so 
$v(\delta(F_n'/F_n))\rightarrow 0$ as $n\rightarrow \infty$ implies $v(\delta(F_n''/F_n))\rightarrow 0$ as $n\rightarrow \infty$.
\end{proof}

We now prove Theorem \ref{localfieldDR}. If $K$ is Coates Greenberg deeply ramified then $v$ is not discrete by Lemma 2.12 \cite{CG} and if $K$ is deeply ramified then $v$ is not discrete by Theorem \ref{GRThm2}. Thus we may assume that $v$ is not discrete.

Suppose that $K$ is deeply ramified. We have that  
$$
0=\Omega_{\cO_{\overline{\Q_p}}|\cO_K}=\lim_{\rightarrow}\left(\Omega_{\cO_{K'}|\cO_K}\otimes_{\cO_{K'}}\cO_{\overline{\Q_p}}\right)
$$
where the limit is over the subextensions $K'$ of $\overline{\Q_p}$ such that $K'$ is finite Galois over $K$, so for all such $K'$, we have natural injections of
$\Omega_{\cO_{K'}|\cO_K}$ into $\Omega_{\cO_{\overline{\Q_p}}|\cO_K}$ by Theorem \ref{Theorem4*}, and thus $\Omega_{\cO_{K'}|\cO_K}=0$. By Proposition \ref{CGC}, we have that $v(\delta(F_n'/F_n))\rightarrow 0$ as $n\rightarrow 0$ for all finite Galois subextensions $K'/K$ of $\overline{\Q_p}$. Thus $K$ is Coates Greenberg deeply ramified by Lemma \ref{LemmaCGA}. 

Now suppose that $K$ is Coates Greenberg deeply ramified. Let $K\rightarrow K'$ be a finite Galois extension. Then $\Omega_{\cO_{K'}|\cO_K}$ is almost zero by Proposition \ref{CGC}. Since
$$
\Omega_{\cO_{\overline{\Q_p}}|\cO_K}=\lim_{\rightarrow}\left(\Omega_{\cO_{K'}|\cO_K}\otimes_{\cO_{K'}}\cO_{\overline{\Q_p}}\right)
$$
where the limit is over the subextensions $K'$ of $\overline{\Q_p}$ such that $K'$ is finite Galois over $K$,  we have that 
$\Omega_{\cO_{\overline{\Q_p}}|\cO_K}$ is almost zero. By Theorem \ref{Theorem4'},
 $\Omega_{\cO_{\overline{\Q_p}}|\cO_K}=0$. Thus $K$ is deeply ramified.

\end{document}